\numberwithin{equation}{section}
\numberwithin{figure}{section}
\numberwithin{table}{section}
\def\bZ{{\mathbb Z}}
\def\bF{{\mathbf F}}
\def\bR{{\mathbb R}}
\def\sE{{\mathscr E}}
\def\sF{{\mathscr F}}
\def\sG{{\mathscr G}}
\def\bs{\mathbf{s}}
\def\br{\mathbf{r}}
\def\bN{\mathbb{N}}
\def\sS{\mathscr{S}}
\def\fm{\mathfrak{m}}
\def\${|\!|\!|}
\def\l|{\left|\!\left|\!\left|}
\def\r|{\right|\!\right|\!\right|}
\newtheorem{theorem}{Theorem}[section]
\newtheorem{lemma}[theorem]{Lemma}
\newtheorem{proposition}[theorem]{Proposition}
\newtheorem{corollary}[theorem]{Corollary}
\theoremstyle{definition}
\newtheorem{definition}[theorem]{Definition}
\newtheorem{example}[theorem]{Example}
\newtheorem{hypothesis}[theorem]{Hypothesis}
\theoremstyle{remark}
\newtheorem{remark}[theorem]{Remark}
\numberwithin{equation}{section}
\begin{document}

\title[Diffusions with discontinuous scale]{Dirichlet form approach to diffusions with discontinuous scale}

\author{Liping Li}
\address{Fudan University, Shanghai, China.  }
\address{Bielefeld University,  Bielefeld, Germany.}
\email{liliping@fudan.edu.cn}
\thanks{The author is a member of LMNS,  Fudan University.  He is also partially supported by NSFC (No.  11931004) and Alexander von Humboldt Foundation in Germany.  }


\subjclass[2010]{Primary 31C25, 60J35,  60J45.}



\keywords{Diffusion processes,  Dirichlet forms,  Regular representations, Ray-Knight compactification,  Quasidiffusions}

\begin{abstract}
It is well known that a regular diffusion on an interval $I$ without killing inside is uniquely determined by a canonical scale function $\bs$ and a canonical speed measure $\fm$.  Note that $\bs$ is a strictly increasing and continuous function and $\fm$ is a fully supported Radon measure on $I$.  In this paper we will associate a general triple $(I,\bs,\fm)$,  where $\bs$ is only assumed to be increasing and $\fm$ is not necessarily fully supported,  to certain Markov processes by way of Dirichlet forms.  A straightforward generalization of Dirichlet form associated to regular diffusion will be first put forward,  and we will find out its corresponding continuous Markov process $\dot X$,  for which the strong Markov property fails whenever $\bs$ is not continuous.  Then by operating regular representations on Dirichlet form and Ray-Knight compactification on $\dot X$ respectively,  the same unique desirable symmetric Hunt process associated to $(I,\bs,\fm)$ is eventually obtained.  This Hunt process is homeomorphic to a quasidiffusion,  which is known as a celebrated generalization of regular diffusion.

\end{abstract}

\maketitle
\tableofcontents

\section{Introduction}

The title is a little misleading because the scale function of a nice diffusion in one dimension is always continuous.  Actually it focuses on the problem that how to associate an increasing but not necessarily continuous function on an interval to a certain Markov process.  But it is roughly correct because the final result shows that the desirable Markov process is very similar to a diffusion,  besides that the continuity of sample paths is replaced by the so-called \emph{skip-free property}.  This problem has been considered by Sch\"utze \cite{S79} by way of generalizing the \emph{second order differential operator} raised by Feller,  and it turns out that the resulting process thereof is closely related to a widely studied Markov process,  called \emph{quasidiffusion} in,  e.g.,  \cite{BK87,  K86},  \emph{generalized diffusion} in,  e.g.,  \cite{W74,  KW82, LM20} and \emph{gap diffusion} in,  e.g.,  \cite{K81}.  In this paper we will adopt another treatment by virtue of the theory of Dirichlet forms.  A Dirichlet form is a closed symmetric form with Markovian property  on an $L^2$-space.  Due to a series of important works by Fukushima and Silverstein in the 1970s,  the ``\emph{regularity}" of a Dirichlet form assures that it is associated to a symmetric \emph{Hunt process}.  We refer readers to \cite{FOT11, CF12} for notations and terminologies in the theory of Dirichlet forms. 

What is a diffusion in one dimension? As one of the most important stochastic models,  it is a continuous strong Markov process $X=(X_t)_{t\geq 0}$ on an interval $I=\langle l, r\rangle$ where $l$ or $r$ may or may not be contained in $I$; see,  e.g., \cite{I06,  IM74,  M68}.  Due to the study \cite{IM74} by It\^o and McKean,  every diffusion can be decomposed into ``regular" pieces: Like the concept of irreducibility for Markov chains,  this regularity means that every point in a piece can be visited by the diffusion starting from any other point in the same piece in finite time.  So one loses little generality and gains much simplification by considering only a regular diffusion.  More precisely,  $X$ is called \emph{regular} if $\mathbf{P}_x(T_y<\infty)>0$ for any $x\in \mathring{I}:=(l,r)$ and $y\in I$,  where $T_y:=\inf\{t>0: X_t=y\}$.  For simplification we further assume that $X$ has no killing inside in the sense that $X_{\zeta-}\notin I$ if $\zeta<\infty$ where $\zeta$ is the lifetime of $X$.  Then a significant characterization tells us that $X$ is uniquely determined by a \emph{canonical scale function} $\bs$ and a \emph{canonical speed measure} $\fm$; see,  e.g.,  \cite[V\S7]{RW87} and \cite[VII\S3]{RY99}.  Note that $\bs$ is a continuous and strictly increasing function on $I$ and $\fm$ is a fully supported Radon measure on $I$.  

To our knowledge,  there appeared at least two important analytic treatments to reach a regular diffusion.  (So we have at least two possible methods to generalize it.) 
The first one brings into play the generalized second order differential operator 
\begin{equation}\label{eq:01}
\mathscr L:=\frac{1}{2}\frac{d^2}{d\fm d\bs}
\end{equation}
and its associated \emph{Feller semigroup}.  This semigroup leads to a \emph{Feller process},  which is identified with the expecting diffusion.   A systematic introduction is referred to in \cite{M68}.  Another way is to make use of Dirichlet forms.  As far as we know,  Fang et al.  \cite{FHY10} first put forward the regular Dirichlet form on $L^2(I,\fm)$ associated to this diffusion:
\begin{equation}\label{eq:13-2}
\begin{aligned}
	&\sF^{(\bs,\fm)}=\{f\in L^2(I,\fm):f\ll \bs,  df/d\bs\in L^2(I,d\bs),  \\
	&\qquad\qquad \qquad f(j)=0\text{ if } j\notin I\text{ and }|\bs(j)|<\infty  \text{ for }j=l\text{ or }r\}, \\
	&\sE^{(\bs,\fm)}(f,g)=\frac{1}{2}\int_{I}\frac{df}{d\bs}\frac{dg}{d\bs}d\bs,\quad f,g\in \sF^{(\bs,\fm)},
\end{aligned}
\end{equation}
where $f\ll \bs$ stands for that $f$ is absolutely continuous with respect to $\bs$; see also \cite{F10}.  With the help of theory of Dirichlet forms,  one can go farther in related studies.  For example,  the correspondence between the Dirichlet form \eqref{eq:13-2} and the operator \eqref{eq:01} was studied in \cite{F14}.  Dirichlet form characterization for diffusions without regular property was accomplished in \cite{LY19,  L21}.  


To generalize the operator \eqref{eq:01},  Kac and Krein \cite{KK74} initialized a spectral theory,  known as \emph{Krein's correspondence}, for the case that $\bs(x)=x$ and $\fm$ is determined by a right continuous and (not strictly) increasing function.  This theory applied to Markov processes and led to celebrated quasidiffusions; see,  e.g.,  \cite{K75,  K81,  KW82}.  At a heuristic level,  a quasidiffusion may be thought of as the \emph{trace} of Brownian motion on the topological support of $\fm$.  Then Sch\"utze \cite{S79} studied the operator \eqref{eq:01} for another case that $\bs$ is only strictly increasing and $\fm$ is a fully supported Radon measure such that $\fm$ has an isolated mass at points where $\bs$ is neither right nor left continuous.  It is insightful to point out in \cite{S79} that \eqref{eq:01} still corresponds to a ``simple" Markov process on $I$,  whereas the strong Markov property may fail;  see \cite[Corollary~4.7]{S79}.  To associate \eqref{eq:01} to a strong Markov process, Sch\"utze operated the completion of state space $I$ with respect to $\bs$,  so that a desirable Feller semigroup was obtained on the completed space.

In the current paper we will develop a second way to generalize regular diffusions.  
The object is to put forward a generalization of \eqref{eq:13-2} and to associate it to certain Markov process by virtue of the theory of Dirichlet forms.  More precisely,  an increasing function $\bs$ on $I$ and a positive Radon measure $\fm$ on $I$ will be considered.  Note that $\bs$ is not assumed to be strictly increasing nor left/right continuous,  and $\fm$ is not necessarily fully supported.  Clearly this pair $(\bs,\fm)$ is more general than that in \cite{S79}.  By explaining the differential term $df/d\bs(x)dg/d\bs(x)d\bs(x)$ at $x\in D^\pm:=\{x\in I:\bs(x)\neq \bs(x\pm)\}$ as $(f(x)-f(x\pm))(g(x)-g(x\pm))/(\bs(x)-\bs(x\pm))$ for suitable functions $f$ and $g$,  we raise a quadratic form $(\sE,\sF)$ on $L^2(I,\fm)$, explicitly expressed as \eqref{eq:25},  in terms of $(\bs,\fm)$.  As coincides with \eqref{eq:13-2} whenever $\bs$ is strictly increasing and continuous and $\fm$ is fully supported,  it is a reasonable generalization of \eqref{eq:13-2}.  But the challenge is that $(\sE,\sF)$ is not expected to be a regular Dirichlet form.  We can only prove in Theorem~\ref{LM12} that it is a Dirchlet form \emph{in the wide sense},  i.e.  a non-negative symmetric closed form satisfying the Markovian property (see \cite[\S1.4]{FOT11} for these terminologies).  Nevertheless,  inspired by the remarkable observation in \cite[Corollary~4.7]{S79},  we eventually obtains in Theorem~\ref{THM82} an $\fm$-symmetric continuous Markov process $\dot X$ on $I$,  for which the strong Markov property fails whenever $\bs$ is not continuous.  The Dirichlet form of $\dot X$ on $L^2(I,\fm)$ is exactly $(\sE,\sF)$. 

It is then interesting to find out certain methods to ``regularize" $(\sE,\sF)$ or $\dot X$ to obtain a nicer Markov process.  Regarding $(\sE,\sF)$,  we will adopt a standard argument,  known as \emph{regular representation},  which was raised by Fukushima in a seminal paper \cite{F71}.  By applying this argument to every Dirichlet form in the wide sense,  a family of regular Dirichlet forms will  be obtained, and all of them are \emph{quasi-homeomorphic} in the sense of,  e.g.,  \cite[Definition~1.4.1]{CF12}.  In our case we will prove further in Theorem~\ref{THM311} that all regular representations of $(\sE,\sF)$ are not only quasi-homeomorphic but also homeomorphic.  Hence up to homeomorphisms,  $(\sE,\sF)$ is associated to a unique Hunt process by way of regular representation.  We call it the \emph{regularized Markov process associated to} the triple $(I,\bs,\fm)$.  On the other hand,  the ``regularizing" operation of $\dot X$ is rather standard as well.  
That is the celebrated \emph{Ray-Knight compactification}.  The Ray-Knight compactification,  raised by Ray \cite{R59} and firmly established after a revision by Knight \cite{K65},  provides a wonderful way to modify an (almost arbitrary) Markov process very slightly and to result in a nice Markov process on a merely larger topology space,  called \emph{Ray process}.  The original process is assumed to enjoy neither strong Markov property nor certain regularity of sample paths.  The resulting Ray process is,  however,  c\`adl\`ag and strong Markovian.  A brief review of Ray processes and Ray-Knight compactification will be shown in Appendix~\ref{APPB}.  Returning back to our investigation,  we can arrive at a Ray process by operating Ray-Knight compactification on $\dot X$.  As is expected,  this Ray process is identified with the regularized Markov process associated to $(I,\bs,\fm)$.  In a word,  the methods of regular representation in analysis and Ray-Knight compactifiction in probability lead to the same desirable ``regularization" of $\dot X$.  

We should point out that among all regular representations of $(\sE,\sF)$,  a canonical one singled out in Theorem~\ref{THM6} is associated to a quasidiffusion.  In other words,  the regularized Markov process associated to $(I,\bs,\fm)$ is actually a quasidiffusion up to certain homeomorphism.

Throughout this paper a basic hypothesis (DK),  as stated in Hypothesis~\ref{HYP22},   will be assumed.  It bars the possibility of killing inside for $\dot X$ and its regularized Markov process.  In addition,  this hypothesis corresponds to another condition (QK) for quasidiffusions appearing in \cite{L23b}.  We wish to state emphatically that without assuming (DK),  the formulation of regularized Markov process still holds true,  while there may appear killing at endpoints contained in the state space; see Remarks~\ref{RM38}. 


The paper is organized as follows.  In \S\ref{SEC2}, we will introduce the quadratic form $(\sE,\sF)$,  a generalization of \eqref{eq:01},  and the main result, Theorem~\ref{LM12}, shows that it is a Dirichlet form in the wide sense.  The section \S\ref{SEC3} is devoted to presenting explicit expression of regular representations of $(\sE,\sF)$ and to proving that all of them are homeomorphic.   
In \S\ref{SEC31} the simple Markov process $\dot X$ associated to $(\sE,\sF)$ will be put forward.  It turns out in \S\ref{SEC43-2} that the regularized Markov process associated to $(I,\bs,\fm)$ is exactly the Ray-Knight compactifiction of $\dot X$.  The last section \S\ref{SEC8} gives several examples of Markov processes that can be obtained by regularizing a certain triple $(I,\bs,\fm)$.  


\subsection*{Notations}
Let $\overline{\mathbb{R}}=[-\infty, \infty]$ be the extended real number system.  A set $E\subset \overline{\bR}$ is called a \emph{nearly closed subset} of $\overline{\bR}$ if $\overline E:= E\cup \{l,r\}$ is a closed subset of $\overline{\bR}$ where $l=\inf\{x: x\in E\}$ and $r=\sup\{x: x\in E\}$.  The point $l$ or $r$ is called the left or right endpoint of $E$.  Denote by $\overline{\mathscr K}$ the family of all nearly closed subsets of $\overline{\bR}$.  Set
\[
	\mathscr K:=\{E\in \overline{\mathscr K}: E\subset \bR\},
\]
and every $E\in \mathscr K$ is called a \emph{nearly closed subset} of $\bR$.  

Let $E$ be a locally compact separable metric space.  We denote by $C(E)$ the space of all real valued continuous functions on $E$.  In addition,  $C_c(E)$ is the subspace of $C(E)$ consisting of all continuous functions on $E$ with compact support and
\[
	C_\infty(E):=\{f\in C(E): \forall \varepsilon>0, \exists K\text{ compact},  |f(x)|<\varepsilon, \forall x\in E\setminus K\}.  
\]
The functions in $C_\infty(E)$ are said to be vanishing at infinity.  
Given an interval $J$,  $C_c^\infty(J)$ is the family of all smooth functions with compact support on $J$.  

Given a continuous and increasing function $\bs$ on an interval $J$ and $f\in C(J)$,  $f\ll \bs$ means that there exists an absolutely continuous function $g$ on $\bs(J):=\{\bs(x):x\in J\}$ such that $f=g\circ \bs$.  Meanwhile $df/d\bs:=g'\circ \bs$.  

The abbreviations CAF and PCAF stand for \emph{continuous additive functional} and \emph{positive continuous additive functional} respectively.  

\section{Dirichlet forms associated to discontinuous scales}\label{SEC2}


\subsection{Discontinuous scale on an interval}

Let $-\infty\leq l<0<r\leq \infty$ and we are given two ingredients throughout this paper:
\begin{itemize}
\item[(a)] A non-constant increasing real valued function $\bs$,  called \emph{scale function},  on $(l,r)$ such that $\bs(0-)=\bs(0)=\bs(0+)=0$. 
\item[(b)] A positive measure $\fm$,  called \emph{speed measure},  on $[l,r]$, which is Radon on $(l,r)$,  i.e.  $\fm([a,b])<\infty$ for any $[a,b]\subset (l,r)$,  and such that $ \fm(\{0\})=0$.  
\end{itemize}
The conditions  $\bs(0-)=\bs(0)=\bs(0+)=0$ and $ \fm(\{0\})=0$ lose no generality,  and for convenience we make the conventions $\bs(l-)=\bs(l):=\lim_{x\downarrow l}\bs(x)$ and $\bs(r+)=\bs(r):=\lim_{x\uparrow r}\bs(x)$. 
It is worth emphasising that $\bs(l),\bs(r), \fm(\{l\})$ or $\fm(\{r\})$  may be infinite. 

Two basic hypotheses on $\bs$ and $\fm$ will be assumed.  Before stating them we need to prepare some concepts and notations.  
The first class of notions concerns the structure of $[l,r]$ in terms of $\bs$ and $\fm$.  Let $\text{supp}[\fm]$ be the \emph{support} of $\fm$,  i.e.  the smallest closed subset of $[l,r]$ outside which $\fm$ vanishes. 
Regarding the scale function,  note that $\bs$  is not assumed to be continuous nor strictly increasing.  Instead set
\begin{equation}\label{eq:13}
	U:=\{x\in [l,r]: \bs\text{ is constant on }(x-\varepsilon,x+\varepsilon)\cap [l,r]\text{ for some }\varepsilon>0\},
\end{equation}
and we call $\text{supp}[\bs]:=[l,r]\setminus U$ the \emph{support} of $\bs$.  In addition,  $U \setminus \{l,r\}$ is clearly open and hence can be written as a disjoint union of open intervals 
\begin{equation}\label{eq:14}
U\setminus \{l,r\}=\bigcup_{n\geq 1}(c_n,d_n). 
\end{equation}
Put $D^\pm:=\{x\in [l,r]: \bs(x)\neq \bs(x\pm)\}$ and $D^0:=D^+\cap D^-,  D:=D^+\cup D^-$.
Every point $x\in D^0$ is called \emph{isolated} (with respect to $\bs$),  and the interval $(c_n,d_n)$ is called \emph{isolated} (with respect to $\bs$) if $c_n,d_n\in D\cup \{l,r\}$.  The second class of notions classifies the boundaries $l$ and $r$ in terms of the pair $(\bs,\fm)$.  They are rather important and we summarize them as a definition.  For convenience $\fm(r-)<\infty$ (resp.  $\fm(l+)<\infty$) stands for that $\fm((r-\varepsilon,r))<\infty$ (resp.  $\fm((l,l+\varepsilon))<\infty$) for some $\varepsilon>0$.  Otherwise we make the convention $\fm(r-)=\infty$ (resp. $\fm(l+)=\infty$).

\begin{definition}\label{DEF21}
\begin{itemize}
\item[(1)] The endpoint $r$ (resp.  $l$) is called \emph{approachable} if $\bs(r)<\infty$ (resp.  $\bs(l)>-\infty$).  
\item[(2)] An approachable endpoint $r$ (resp.  $l$) is called \emph{regular} if $\fm(r-)<\infty$ (resp.  $\fm(l+)<\infty$).  
\item[(3)] A regular endpoint $r$ (resp.  $l$) is called \emph{reflecting},  if $\fm(\{r\})<\infty$ (resp.  $\fm(\{l\})<\infty$).  Otherwise it is called \emph{absorbing}. 
\end{itemize}
\end{definition}

Based on this definition,  we put an interval $I:=\langle l,  r\rangle$ ended by $l$ and $r$,
where $l\in I$ (resp.  $r\in I$) if and only if $l$ (resp. $r$) is reflecting.  The restriction of $\fm$ to $I$,  still denoted by $\fm$,  is clearly a Radon measure.  Now we have a position to present two additional conditions on the triple $(I,\bs,\fm)$.

\begin{hypothesis}\label{HYP22}
The following conditions are assumed for $(I,\bs,\fm)$:
\begin{itemize}
\item[(DK)] $l$ or $r$ is reflecting if it is the endpoint of an isolated interval in \eqref{eq:14}.  
\item[(DM)]  $\text{supp}[\bs]\subset \text{supp}[\fm]$,  $\fm(\{x\})>0$ for $x\in D^0$ and for isolated interval $(c_n,d_n)$ in \eqref{eq:14},  it holds that $\fm\left(J_{c_n}^{d_n} \right)>0$,
where $J_{c_n}^{d_n}$ is an interval ended by $c_n$ and $d_n$ and $c_n\in J_{c_n}^{d_n}$ (resp.  $d_n\in J_{c_n}^{d_n}$) whenever $c_n\notin D^+$ (resp.  $d_n\notin D^-$).  
\end{itemize}
\end{hypothesis}
\begin{remark}\label{RM22}
The first hypothesis (DK) will bar the possibility of killing inside for desirable Markov process corresponding to $(I,\bs,\fm)$; see Remark~\ref{RM38}.  The second hypothesis (DM)  will be only used to obtain a fully supported ``regularized" speed measure in Theorem~\ref{THM6}.  
Note that $\text{supp}[\bs]\subset \text{supp}[\fm]$ implies that $(\alpha, \beta)\subset U$ for any $\fm$-negligible open interval $(\alpha,\beta)\subset I$.  When $\bs$ is strictly increasing,  (DM) reads as 
\[
\text{supp}[\fm]=[l,r],\text{ and }\fm(\{x\})>0,\; x\in D^0,
\]  as is identified with the assumption in \cite{S79}. 
\end{remark}

\subsection{Absolute continuity with respect to $\bs$}


In this subsection we introduce the concept of \emph{absolute continuity} with respect to $\bs$.  It is rather standard but for readers' convenience we state some details as follows.  For convenience we make the conventions $\int_{(0,x)}:=-\int_{[x,0)}$ and $\int_{(0,x]}:=-\int_{(x,0)}$ for $x<0$.  In addition,  for a measure $\mu$ on $I$ with $\mu(\{0\})=0$ and $x<0$,  $\mu((0,x)):=-\mu([x,0))$ and $\mu((0,x]):=-\mu((x,0))$. 

We first decompose $\bs$ into three parts,  each of which induces a Radon measure.  Put
\[
	\mu^+_d:=\sum_{x\in D^+}\left(\bs(x+)-\bs(x)\right)\cdot \delta_x,\quad \mu^-_d:=\sum_{x\in D^-}\left(\bs(x)-\bs(x-)\right)\cdot \delta_x,
\]
where $\delta_x$ is the Dirac measure at $x$,  and for $x\in I$,  
\begin{equation}\label{eq:22}
	\bs^+_d(x):=\mu^+_d((0,x)),\quad \bs^-_d(x):=\mu^-_d((0,x]),\quad \bs_c(x):=\bs(x)-\bs^+_d(x)-\bs^-_d(x).  
\end{equation}
Note that $\mu^\pm_d$ are positive Radon measures on $(l,r)$, $\bs^+_d$ (resp.  $\bs^-_d$) is left (resp.  right) continuous,  and $\bs_c$ is increasing and continuous on $I$.  Denote by $\mu_c$ the Lebesgue-Stieltjes measure of $\bs_c$.   
With the decomposition \eqref{eq:22} at hand we turn to introduce a family $\mathscr{S}$ of absolutely continuous functions.  The following definition is due to \cite{S79}.

\begin{definition}
A function $f$ on $I$ is called \emph{$\bs$-continuous} if $f$ has finite limits from the left and the right on $I$ and that the right or left continuity of $\bs$ at a point implies the same property of $f$.  Denote by $C_\bs(I)$ the family of all $\bs$-continuous functions on $I$.  
\end{definition}

For $f\in C_\bs(I)$,  set $f'_{\bs,\pm}(x):=(f(x)-f(x\pm))/(\bs(x)-\bs(x\pm))$ for $x\in D^\pm$.  Then 
\[
f^c(x):=f(x)-\int_{(0,x)} f'_{\bs,+}(y)\mu^+_d(dy)-\int_{(0,x]} f'_{\bs,-}(y)\mu^-_d(dy),\quad x\in I
\]
gives a continuous function on $I$.  Define further a family of absolutely continuous functions with respect to $\bs_c$:
 \[
 	\sS_c:=\left\{g\in C(I): g\ll \bs_c,  \frac{dg}{d\bs_c}\in L^2(I, \mu_c)\right\}
 \]
 and another two families of functions
 \[
 	\sS^+_d:=\left\{f=\int_{(0,\cdot)} gd\mu^+_d: g\in L^2(I,\mu^+_d)\right\},\quad \sS^-_d:=\left\{f=\int_{(0,\cdot]} gd\mu^-_d: g\in L^2(I,\mu^-_d)\right\}.
 \]
 Now we have a position to put forward the family $\sS$.  
 
 \begin{definition}
 Define a family of $\bs$-continuous functions on $I$:
 \begin{equation}\label{eq:11-2}
\sS:=\left\{f\in C_\bs(I): f'_{\bs,\pm}\in L^2(D^\pm, \mu^\pm_d),  f^c\in \sS_c\right\}.  
\end{equation}
For every $f,g\in \sS$,  define 
\begin{equation}\label{eq:11}
\int_I \frac{df}{d\bs}\frac{dg}{d\bs}d\bs:=\int_I \frac{df^c}{d\bs^c}\frac{dg^c}{d\bs^c}d\bs^c+\int_{D^+}f'_{\bs,+}g'_{\bs,+}d\mu^+_d+\int_{D^-}f'_{\bs,-}g'_{\bs,-}d\mu^-_d, 
\end{equation}
 where $g^c, g'_{\bs,\pm}$ are defined analogously.  
 \end{definition}
 \begin{remark}\label{RM26}
 Note that $f\in \sS$ is constant on each interval $(c_n,d_n)$ appearing in \eqref{eq:14}.   In addition,  every $f\in \sS$ admits a unique decomposition
 \[
 	f=f^c+f^++f^-,
 \]
 where $f^c\in \sS_c$,  $f^+:=\int_{(0,\cdot )} f'_{\bs,+}(y)\mu^+_d(dy)\in \sS^+_d$ and $f^-:=\int_{(0,\cdot]} f'_{\bs,-}(y)\mu^-_d(dy)\in \sS^-_d$.  Particularly,  when $r$ (resp. $l$) is approachable,  $f\in \sS$ admits a finite limit $f(r):=\lim_{x\uparrow r}f(x)$ (resp.  $f(l):=\lim_{x\downarrow l}f(x)$) even if $r\notin I$ (resp. $l\notin I$). 
 \end{remark}

Denote by 
\begin{equation}\label{eq:34}
	\overline{\bs([l,r])}=\bs([l,r])\cup \{\bs(x-): x\in D^-\}\cup \{\bs(x+): x\in D^+\}
\end{equation}	
 the closure of $\bs([l,r]):=\{\bs(x):x\in [l,r]\}$ in $\overline{\bR}$. 
Set $\widehat{l}:=\bs(l)$ and $\widehat{r}:=\bs(r)$.  We call $\widehat{l}$ or $\widehat{r}$ \emph{reflecting} if so is $l$ or $r$.  Let $\widehat{I}$ be a subset of $\overline{\bs([l,r])}$ such that
\begin{equation}\label{eq:27-3}
	\overline{\bs([l,r])}\setminus \{\widehat{l},\widehat{r}\}\subset \widehat{I},\quad \widehat{l}\in \widehat{I}\text{ (resp. }\widehat{r}\in \widehat{I}\text{) if and only if }\widehat{l}\text{ (resp. }\widehat{r}\text{) is reflecting.}
\end{equation}
Then $\widehat{I}$ is a \emph{nearly closed subset} of $\bR$,  i.e.  $\widehat{I}\in \mathscr K$.  
To make $\sS$ more comprehensible,  we set an auxiliary map
\begin{equation}\label{eq:27-2}
	\widehat{\iota}: \sS\rightarrow C_\mathrm{f}(\widehat{I}),\quad  f\mapsto \widehat{f},
\end{equation}
where $C_\mathrm{f}(\widehat{I})$ is the family of all continuous functions on $\widehat{I}$ such that $\widehat{f}(\widehat{j}):=\lim_{\widehat{x}\rightarrow \widehat{j}}\widehat{f}(\widehat{x})$ exists in $\bR$ whenever $\widehat{j}\notin \widehat{I}$ is finite for $\widehat{j}=\widehat{l}$ or $\widehat{r}$,  and $\widehat{f}$ is defined as
\begin{equation}\label{eq:35}
	\widehat{f}(\bs(x)):=f(x),\; x\in I, \quad
	\widehat{f}(\bs(x\pm)):=f(x\pm),\; x\in D^\pm.  
\end{equation}
Note that 
\begin{equation}\label{eq:36-2}
	\widehat{f}(\widehat{l})=f(l)\quad \left(\text{resp. }\widehat{f}(\widehat{r})=f(r) \right)
\end{equation}
if $\widehat{l}$ (resp.  $\widehat{r}$) is finite,  and the map \eqref{eq:27-2} is a well-defined injection due to Remark~\ref{RM26}.  
 Define
\begin{equation}\label{eq:211}
	\widehat{\sS}:=\{\widehat{f}=\widehat{\iota}f:  f\in \sS\}.    
\end{equation}
Then $\widehat{\iota}$ is bijective between $\sS$ and $\widehat{\sS}$. 
The expression of $\widehat{\mathscr S}$ below is crucial to proving the main result of this paper.  Its proof is elementary but long,  and we put it in Appendix~\ref{APPA}.   

\begin{lemma}\label{LM39}
It holds that
\begin{equation}\label{eq:33}
\widehat{\sS}=\left\{\widehat{f}=\widehat{h}|_{\widehat{I}}: \widehat{h}\in \dot H^1_e\left((\widehat{l},\widehat{r}) \right)\right\},
\end{equation}
where $\dot H^1_e((\widehat{l},\widehat{r}))$ is the family of all absolutely continuous function $\widehat{h}$ on $(\widehat{l},\widehat{r})$ such that $\widehat h'\in L^2((\widehat{l},\widehat{r}))$. 
\end{lemma}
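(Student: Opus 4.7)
The plan is to prove the two inclusions of \eqref{eq:33} separately, leveraging the decomposition $f=f^c+f^++f^-$ from Remark~\ref{RM26} together with the observation that the gaps of $\overline{\bs([l,r])}$ inside $(\widehat{l},\widehat{r})$ are exactly the intervals $(\bs(x),\bs(x+))$ for $x\in D^+$ and $(\bs(x-),\bs(x))$ for $x\in D^-$.

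For the forward inclusion $\widehat{\sS}\subset\{\widehat h|_{\widehat I}:\widehat h\in \dot H^1_e((\widehat l,\widehat r))\}$, given $f\in\sS$ I would extend $\widehat f$ from $\overline{\bs([l,r])}$ to all of $(\widehat l,\widehat r)$ by \emph{linear interpolation on each gap}. On a gap $(\bs(x),\bs(x+))$ the interpolant has constant derivative $f'_{\bs,+}(x)$, contributing $(f'_{\bs,+}(x))^2\,\mu^+_d(\{x\})$ to $\int (\widehat h')^2 d\widehat t$; summing over $D^+$ gives $\int_{D^+}(f'_{\bs,+})^2 d\mu^+_d<\infty$, and analogously for $D^-$. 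On the remainder of $\overline{\bs([l,r])}$, using that by Remark~\ref{RM26} $f$ (and hence $\widehat h$) is constant on every constancy interval $(c_n,d_n)$, I would invoke a change of variable against $\bs_c$ to rewrite that piece of $\int(\widehat h')^2 d\widehat t$ as $\int_I (df^c/d\bs_c)^2 d\mu_c$, which is finite because $f^c\in\sS_c$. Absolute continuity of $\widehat h$ on $(\widehat l,\widehat r)$ then follows from piecewise absolute continuity of the pieces and matching values at the countably many interfaces.

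For the reverse inclusion, given $\widehat h\in \dot H^1_e((\widehat l,\widehat r))$, I would set $f(x):=\widehat h(\bs(x))$ for $x\in I$ (with the obvious conventions at reflecting endpoints). Continuity of $\widehat h$ yields $f\in C_\bs(I)$ with $f(x\pm)=\widehat h(\bs(x\pm))$ for $x\in D^\pm$, so that $\widehat\iota f=\widehat h|_{\widehat I}$ via \eqref{eq:35}. The bounds $f'_{\bs,\pm}\in L^2(D^\pm,\mu^\pm_d)$ come from Cauchy--Schwarz: for $x\in D^+$,
\[
\bigl(f(x+)-f(x)\bigr)^2\le\bigl(\bs(x+)-\bs(x)\bigr)\int_{\bs(x)}^{\bs(x+)}(\widehat h'(\widehat t))^2\,d\widehat t,
\]
and summing gives a bound by $\|\widehat h'\|_{L^2}^2$. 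For $f^c\in\sS_c$, I would use the identity
\[
f^c(x)-f^c(0)=\int_0^{\bs(x)}\widehat h'(\widehat t)\,d\widehat t-\sum_{\text{gaps in }(0,x]}\int_{\text{gap}}\widehat h'(\widehat t)\,d\widehat t,
\]
so that only the complement of the gaps contributes, and then transform through $\bs_c$ to identify $df^c/d\bs_c=\widehat h'\circ\bs$ off the constancy set $U$ and verify it lies in $L^2(\mu_c)$.

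The main obstacle is the bookkeeping for the continuous piece $f^c$: executing the change of variables against $\bs_c$, which is in general neither strictly increasing (it is flat on each $(c_n,d_n)$, where $\mu_c$ vanishes) nor the restriction of $\bs$. This is purely measure-theoretic but requires separating the flat intervals of $\bs_c$ from its strictly increasing part and verifying that the pushforward of $\mu_c$ by $\bs_c$ coincides with Lebesgue measure on $\bs_c([l,r])$ modulo a suitable null set, so that the $L^2$-identities on $I$ and on $(\widehat l,\widehat r)\setminus \text{gaps}$ transfer back and forth cleanly.
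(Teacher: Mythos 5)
Your proposal follows essentially the same route as the paper's proof in Appendix~\ref{APPA}: linear interpolation on the gaps for the inclusion $\widehat{\sS}\subset\{\widehat{h}|_{\widehat{I}}\}$, composition $f=\widehat{h}\circ\bs$ with Cauchy--Schwarz on the jump parts for the converse, and a change of variables identifying the pushforward of $\mu_c$ under $\bs_c$ with Lebesgue measure (the paper carries out exactly the bookkeeping you flag as the main obstacle, via the function $\widehat{F}(\widehat{x})=|(0,\widehat{x})\cap\widehat{I}|$, its generalized inverse $\widehat{G}$, and the null sets $H$, $\widehat{H}$). The one point to tighten is the final absolute-continuity claim in the forward direction: since the gaps can accumulate on a Cantor-like $\widehat{I}$, one cannot glue ``at countably many interfaces,'' but should instead verify the integral identity $\widehat{h}(\widehat{x})-\widehat{h}(0)=\int_0^{\widehat{x}}\widehat{\varphi}(\widehat{t})\,d\widehat{t}$ directly against your candidate $L^2$ density, as the paper does in \eqref{eq:39}.
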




\subsection{Dirichlet form associated to $(I,\bs, \fm)$}\label{SEC14}

What we are concerned with is the quadratic form $(\sE,\sF)$ on $L^2(I,\fm)$:
\begin{equation}\label{eq:25}
\begin{aligned}
	&\sF:= \{f\in \sS\cap L^2(I,\fm): f(j)=0\text{ if }j\notin I\text{ is approachable for }j=l\text{ or }r\},  \\
	&\sE(f,g):=\frac{1}{2}\int_I \frac{df}{d\bs}\frac{dg}{d\bs}d\bs,\quad f,g\in \sF,
\end{aligned}
\end{equation}
where $\sS$ is defined as \eqref{eq:11-2} and $\sE(f,g)$ is defined as \eqref{eq:11}.   It coincides with \eqref{eq:13-2} if $\bs$ is strictly increasing and continuous.  However in general,  \eqref{eq:25} is not even a Dirichlet form, because $f\in \sF$ is constant on $(c_n,d_n)$ and hence $\sF$ is not necessarily dense in $L^2(I,\fm)$.
Fortunately as proved in the following result,  $(\sE,\sF)$ is a Dirichlet form on $L^2(I,\fm)$ \emph{in the wide sense},  i.e.  $(\sE,\sF)$ is a non-negative symmetric closed form satisfying the Markovian property (cf. \cite[\S1.4]{FOT11}; while we do not assume that $\fm$ is fully supported).   In abuse of terminology we call \eqref{eq:25} the \emph{Dirichlet form associated to} $(I,\bs,\fm)$.  

\begin{theorem}\label{LM12}
The quadratic form $(\sE,\sF)$ defined as \eqref{eq:25} is a Dirichlet form on $L^2(I,\fm)$ in the wide sense.  
\end{theorem}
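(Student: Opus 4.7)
Symmetry, bilinearity, and non-negativity of $\sE$ are immediate from the explicit expansion \eqref{eq:11}, so the real content of the statement lies in the Markovian property and the $\sE_1$-closedness of $\sF$.

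For the Markovian property I would take an arbitrary normal contraction $\phi$ (so $\phi(0)=0$ and $|\phi(a)-\phi(b)|\le|a-b|$) and set $g:=\phi\circ f$ for $f\in\sF$. Since $\phi$ preserves one-sided continuity at every point, $g\in C_\bs(I)$; the contraction bound $|g(x)-g(x\pm)|\le|f(x)-f(x\pm)|$ yields $|g'_{\bs,\pm}|\le|f'_{\bs,\pm}|$ pointwise, so the two discrete components of the energy decrease. For the continuous part $g^c=g-g^+-g^-$, the same 1-Lipschitz control on $\bs_c$-continuous increments of $f$ gives $g^c\ll\bs_c$ with $|dg^c/d\bs_c|\le|df^c/d\bs_c|$. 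Summing the three pieces produces $\sE(g,g)\le\sE(f,f)$, while $|g|\le|f|$ and $\phi(0)=0$ handle $L^2(I,\fm)$-membership and the boundary condition $g(j)=0$ at any approachable $j\notin I$.

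For closedness, I plan to use Lemma~\ref{LM39} to transfer $(\sE,\sF)$ to the Sobolev picture on $(\widehat{l},\widehat{r})$. Each $f\in\sS$ admits a canonical extension $\widehat{h}\in\dot H^1_e((\widehat{l},\widehat{r}))$ obtained by prescribing $\widehat h|_{\widehat I}=\widehat\iota f$ and interpolating linearly across every gap of $\widehat I$: the jump gaps $(\bs(x),\bs(x+))$ for $x\in D^+$, $(\bs(x-),\bs(x))$ for $x\in D^-$, the intervals $(c_n,d_n)$ collapsed to single points of $\widehat I$, and any boundary gap if $\widehat l$ or $\widehat r$ is finite but missing from $\widehat I$. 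A direct computation then gives the energy identity
\[
\sE(f,f)=\tfrac12\int_{\widehat l}^{\widehat r}(\widehat h')^2\, d\widehat x,
\]
since the linear slope on each $D^\pm$-gap is exactly $f'_{\bs,\pm}(x)$, and the change of variables $\widehat x=\bs_c(y)$ turns the continuous piece of $\sE$ into the Lebesgue integral above. This embeds $(\sF,\sE_1)$ isometrically into the Hilbert space $\dot H^1_e((\widehat l,\widehat r))$. Given an $\sE_1$-Cauchy sequence $\{f_n\}\subset\sF$, the extensions $\widehat h_n$ are Cauchy in the Sobolev seminorm, and the $L^2(I,\fm)$-limit of $f_n$, together with the trace condition at any absorbing approachable endpoint, pins down the additive constant, yielding a limit $f\in\sF$ with $\sE_1(f_n-f,f_n-f)\to 0$.

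The principal obstacle will be the transfer of the boundary behaviour. When an endpoint $j\in\{l,r\}$ is approachable but excluded from $I$, the condition $f(j)=0$ must translate into a one-sided Sobolev trace condition $\widehat h(\widehat j)=0$ which has to be stable under $\dot H^1$-limits; this rests on the standard embedding fact that $\dot H^1$-convergence on a half-neighbourhood of a finite endpoint upgrades to uniform convergence up to that endpoint. A related subtlety is that Lemma~\ref{LM39} only asserts the existence of some extension $\widehat h$, not uniqueness, so one must isolate the gap-linear representative and verify that the energy identity above uses precisely this canonical choice.
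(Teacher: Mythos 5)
Your overall architecture is sound and, interestingly, it is almost the mirror image of the paper's: the paper proves closedness \emph{directly} on the three-component decomposition $f=f^c+f^++f^-$ (extracting $\mu_c$-a.e.\ and $\mu_d^\pm$-a.e.\ convergent derivative densities from an $\sE_1$-Cauchy sequence and reconstructing the limit by integration), and proves the Markovian property by \emph{transferring to the Sobolev picture}: via Lemma~\ref{LM39} and \eqref{eq:36-2}, $\widehat f$ lies in the family \eqref{eq:217}, which is recognized as the extended Dirichlet space of the time-changed Brownian motion on $\widehat I$ (by \cite[Theorem~6.2.1~(2)]{FOT11}), so normal contractions operate on it wholesale and $\widehat g\in\widehat\sS$ follows, with the energy comparison read off from $|\widehat g'|\le|\widehat f'|$ a.e.\ together with \eqref{eq:38} and \eqref{eq:39-2}. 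Your closedness argument via the gap-linear isometric (for the energy seminorm) embedding into $\dot H^1_e((\widehat l,\widehat r))$ is a legitimate alternative and is essentially the content of the appendix computations \eqref{eq:38}--\eqref{eq:39-2}; your caveats about the non-uniqueness of the extension, the additive constant, and the Hölder-$\tfrac12$ control of traces at finite endpoints are exactly the right points to watch.

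The one genuine soft spot is your treatment of the continuous component in the Markovianity step. You assert that the $1$-Lipschitz control on increments of $f$ ``gives $g^c\ll\bs_c$ with $|dg^c/d\bs_c|\le|df^c/d\bs_c|$,'' but increments of $g^c$ over $[x,y]$ are increments of $g$ \emph{minus the sum of the jumps of $g$ in $[x,y]$}, and since there is no sign control on the jumps, the contraction bound only yields $|g^c(y)-g^c(x)|\le|f^c(y)-f^c(x)|+2\sum_{(x,y)}|\Delta f|$. When the points of $D$ accumulate, the jump correction need not be $o(\mu_c((x,y)))$ at $\mu_c$-a.e.\ point (Cauchy--Schwarz gives only $\mu_d^\pm((x,y))^{1/2}$, and mutual singularity of $\mu_d^\pm$ and $\mu_c$ does not make $\mu_d^\pm((x,y))^{1/2}/\mu_c((x,y))$ small), so the naive differentiation of increments does not close. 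The conclusion is true, but it needs either the Vol'pert-type chain rule for the diffuse part of the derivative of a BV function, or --- more economically --- the very Sobolev transfer you already set up for closedness: put $\widehat h_g:=\phi\circ\widehat h_f$ on $\widehat I$ and interpolate linearly across the gaps; then $\widehat h_g\in\dot H^1_e((\widehat l,\widehat r))$ with $|\widehat h_g'|\le|\widehat h_f'|$ a.e., Lemma~\ref{LM39} returns $g\in\sS$ with the correct decomposition, and the derivative bounds on all three components drop out of \eqref{eq:38} and \eqref{eq:39-2}. That is precisely the route the paper takes, and I would rewrite this step accordingly rather than argue componentwise on $I$.
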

\begin{proof}
Firstly we note that $(\sE,\sF)$ is clearly a non-negative symmetric quadratic form on $L^2(I,\fm)$.  To prove its closeness,  take an $\sE_1$-Cauchy sequence $$f_n=f^c_n+f^+_n+f^-_n\in \sF,\quad n\geq 1.$$
We do not lose a great deal by assuming that $f_n$ converges to $f$ both $\fm$-a.e.  and in $L^2(I,\fm)$,
\begin{equation}\label{eq:12}
	g^c_n:=\frac{df^c_n}{d\mu_c}\rightarrow g^c\quad \text{ in }L^2(\mu_c)\text{ and }\mu_c\text{-a.e.}, 
\end{equation}
and
\begin{equation}\label{eq:27}
	g^\pm_n:=\frac{df^\pm_n}{d\mu^\pm_d}\rightarrow g^\pm\quad \text{ in }L^2(\mu^\pm_d)\text{ and }\mu^\pm_d\text{-a.e.}
\end{equation}
Assume further that $f(0)=\lim_{n\rightarrow\infty} f_n(0)=\lim_{n\rightarrow \infty} f^c_n(0)$. 
Set for $x\in I$,  
\[
\begin{aligned}
	&f^c(x):=f(0)+\int_{(0,x)}g^c(y)\mu_c(dy)\in \sS_c, \\
	&f^+(x):=\int_{(0,x)}g^+(y)\mu^+_d(dy)\in \sS^+_d, \quad f^-(x):=\int_{(0,x]}g^-(y)\mu^-_d(dy)\in \sS^-_d.
\end{aligned}\] 
Define $\hat{f}:=f^c+f^++f^-\in \sS$.   Then \eqref{eq:12} and \eqref{eq:27} yield $\sE(f_n-\hat{f},f_n-\hat{f})\rightarrow 0$ and 
\begin{equation}\label{eq:28}
	f_n(x)\rightarrow \hat{f}(x),\quad x\in I.
\end{equation}
When $j\notin I$ is approachable for $j=l$ or $r$,  \eqref{eq:28} still holds for $x=j$.  Particularly $\hat{f}(j)=\lim_{n\rightarrow \infty}f_n(j)=0$,  and one eventually obtains $\hat{f}\in \sF$.  
To conclude the closeness,  we only need to note that $f=\hat{f}$,  $\fm$-a.e. on $I$,  by means of $f_n\rightarrow f$,  $\fm$-a.e.,  and \eqref{eq:28}. 

Finally it suffices to show the Markovian property of $(\sE,\sF)$.  
Take $f\in \sF$ and let $g$ be a normal contraction of $f$,  i.e.  
\begin{equation}\label{eq:29}
	|g(x)-g(y)|\leq |f(x)-f(y)|,\; x,y\in I,\quad |g(x)|\leq |f(x)|,\;x\in I.  
\end{equation}
It is easy to verify that $g$ is $\bs$-continuous and constant on each $(c_n,d_n)$,  so that we can define a function $\widehat{g}$ on $\widehat{I}$ as \eqref{eq:35} with $g$ in place of $f$.   Further let $\widehat{f}$ be defined as \eqref{eq:35} for this $f$.  It follows from \eqref{eq:29} that 
\begin{equation}\label{eq:210}
|\widehat g(\widehat x)-\widehat g(\widehat y)|\leq |\widehat f(\widehat x)-\widehat f(\widehat y)|,\; \widehat x,\widehat y\in\widehat  I,\quad |\widehat g(\widehat x)|\leq |\widehat f(\widehat x)|,\;\widehat x\in \widehat I.  
\end{equation}
Denote by $\widehat{J}:=\langle \widehat{l}, \widehat{r}\rangle$ an interval ended by $\widehat{l}$ and $\widehat{r}$ such that $\widehat{l}\in \widehat{J}$ (resp. $\widehat{r}\in \widehat{J}$) if and only if $\widehat{l}\in \widehat{I}$ (resp. $\widehat{r}\in \widehat{I}$).  Then \eqref{eq:36-2} and Lemma~\ref{LM39} yield that $\widehat{f}$ belongs to
\begin{equation}\label{eq:217}
	\left\{\widehat{h}|_{\widehat{I}}: \widehat{h}\in \dot H^1_e\left((\widehat{l},\widehat{r}) \right), \widehat{h}(\widehat{j})=0\text{ whenever }\widehat{j}\in \bR\setminus \widehat{J}\text{ for }\widehat{j}=\widehat{l}\text{ or }\widehat{r}\right\}.
\end{equation}
Note that $\widehat{I}$ is a closed subset of $\widehat{J}$ and,  in view of \cite[Theorem~6.2.1~(2)]{FOT11},  the family \eqref{eq:217} is the extended Dirichlet space of the time-changed Brownian motion on $\widehat{I}$ (see also Theorem~\ref{THM19}).  Particularly,  every normal contraction operates on \eqref{eq:217}, and it follows from \eqref{eq:210} that $\widehat{g}$ also belongs to \eqref{eq:217}.  Thus $\widehat{g}\in  \widehat{\sS}$ and 
$g=\widehat{\iota}^{-1}\widehat{g}\in \sS$.  By means of \eqref{eq:29},  one can also get $g\in L^2(I,\fm)$ and if $j\notin I$ is approachable for $j=l$ or $r$,  then
\[
	|g(j)|=\lim_{x\rightarrow j}|g(x)|\leq \lim_{x\rightarrow j}|f(x)|=|f(j)|=0.  
\]
Consequently $g\in \sF$ is concluded.  In addition,  \eqref{eq:210} also yields that $|\widehat{g}'|\leq |\widehat{f}'|$,  a.e.  on $\widehat{I}$.  Hence $\sE(g,g)\leq \sE(f,f)$ follows from \eqref{eq:210},  \eqref{eq:38} and \eqref{eq:39-2}.  That completes the proof. 
\end{proof}


\section{Regular representations and regularized Markov processes}\label{SEC3}

Following \cite{F71},  we call $(E_1,\fm_1,\sE^1,\sF^1)$ a \emph{D-space} provided that $(\sE^1,\sF^1)$ is a Dirichlet form on $L^2(E_1,\fm_1)$ in the wide sense.  The space $\sF^1_b:=\sF^1\cap L^\infty(E_1,\fm_1)$ is an algebra over $\bR$.  Let $(E_2,\fm_2,\sE^2,\sF^2)$ be another D-space.  Then $(E_1,\fm_1,\sE^1,\sF^1)$ and $(E_2,\fm_2,\sE^2,\sF^2)$ are called \emph{equivalent} if there is an algebra isomorphism $\Phi$ from $\sF^1_b$ to $\sF^2_b$ such that $\Phi$ preserves three kinds of metrics: For $f\in \sF^1_b$, 
\begin{equation}\label{eq:31}
	\|f\|_\infty=\|\Phi(f)\|_\infty, \quad (f,f)_{\fm_1}=(\Phi(f),\Phi(f))_{\fm_2},\quad \sE^1(f,f)=\sE^2(\Phi(f),\Phi(f)),
\end{equation}
where $\|\cdot\|_\infty:=\|\cdot\|_{L^\infty(E_i,\fm_i)}$ and $(\cdot,\cdot)_{\fm_i}=(\cdot,\cdot)_{L^2(E_i,\fm_i)}$ for $i=1,2$.  In addition,  $(E_2,\fm_2,\sE^2,\sF^2)$ is called a \emph{regular representation} of $(E_1,\fm_1,\sE^1,\sF^1)$ if they are equivalent and $(\sE^2,\sF^2)$ is regular on $L^2(E_2,\fm_2)$.  The main result of \cite{F71} shows that every D-space admits regular representations, and all regular representations are quasi-homeomorphic.  

As obtained in Theorem~\ref{LM12},  $(I,\fm,\sE,\sF)$ is a D-space.  This section is 
 devoted to finding out a \emph{canonical} regular representation for it.  
Furthermore,  we will prove that the quasi-homeomorphism between two regular representations of $(I,\fm,\sE,\sF)$ is essentially a strict homoemorphism. 

\subsection{Canonical regular representation}\label{SEC32}

Let us prepare some ingredients for introducing the canonical regular representation.  Recall that $\overline{\bs([l,r])}$ is defined as \eqref{eq:34} and $\widehat{I}$ is defined as \eqref{eq:27-3}.  Note that $(\widehat{l},\widehat{r})\setminus \widehat{I}$ is open and thus can be written as a disjoint union of open intervals:
\begin{equation}\label{eq:15}
	(\widehat{l},\widehat{r})\setminus \widehat{I}=\bigcup_{k\geq 1}(\widehat{a}_k,\widehat{b}_k).  
\end{equation}
  Let $\widehat{\fm}:= \fm\circ \bs^{-1}$ be the image measure of $\fm$ under the embedding map $\bs: [l,r]\rightarrow \overline{\bs([l,r])}$.  Put $\widehat{\bs}(\widehat{x})=\widehat{x}$ for $\widehat{x}\in \widehat{I}$.  The triple $(\widehat{I},\widehat{\bs},\widehat{\fm})$ is called the \emph{canonical regularization} of $(I,\bs,\fm)$.  


With $(\widehat{I}, \widehat{\bs},\widehat{\fm})$ at hand,  we raise another quadratic form as follows:  
\begin{equation}\label{eq:38-3}
\begin{aligned}
	&\widehat\sF:=\{\widehat f=\widehat\iota f: f\in \sF\},\\
	&\widehat\sE(\widehat f,\widehat g):=\sE(\widehat{\iota}^{-1} \widehat f,\widehat{\iota}^{-1} \widehat g),\quad \widehat f, \widehat g \in \widehat \sF.
\end{aligned}
\end{equation}
Recall that $\sS$ and $\widehat{\sS}$ are defined as \eqref{eq:11-2} and \eqref{eq:211} respectively,  and $\widehat{\iota}$ is a bijection between them.     
The main result of this section is as follows. 

\begin{theorem}\label{THM6}
Let $(\widehat{I},\widehat{\bs},\widehat{\fm})$ be the canonical regularization of $(I,\bs,\fm)$ and $(\widehat{\sE},\widehat{\sF})$ be defined as \eqref{eq:38-3}.  Then $\widehat{\fm}$ is a fully supported Radon measure on  $\widehat{I}$,  and $(\widehat{\sE},\widehat{\sF})$ is a regular and irreducible Dirichlet form on $L^2(\widehat I,\widehat \fm)$ admitting the representation:
\begin{equation}\label{eq:321}
\begin{aligned}
	&\widehat{\sF}=\left\{\widehat f\in L^2(\widehat I,\widehat\fm)\cap \widehat \sS: \widehat f(\widehat j)=0\text{ if }\widehat j\in \bR\setminus \widehat{I}\text{ for }\widehat j=\widehat l\text{ or }\widehat r\right\},  \\
	&\widehat{\sE}(\widehat{f},\widehat{f})=\frac{1}{2}\int_{\widehat{I}} \widehat f'(\widehat x)^2d\widehat x+\frac{1}{2}\sum_{k\geq 1}\frac{\left(\widehat{f}(\widehat{a}_k)-\widehat{f}(\widehat{b}_k)\right)^2}{|\widehat{b}_k-\widehat{a}_k|},\quad \widehat{f}\in \widehat{\sF},
\end{aligned}
\end{equation}
where $\widehat{a}_k,\widehat{b}_k$ appear in \eqref{eq:15}.  Particularly,  $(\widehat{I},\widehat{\fm}, \widehat{\sE},\widehat{\sF})$ is a regular representation of $(I,\fm,\sE,\sF)$. 
\end{theorem}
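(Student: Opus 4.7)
The plan is to verify, in order, the four claims: that $\widehat\fm$ is a fully supported Radon measure on $\widehat I$, that $(\widehat\sE,\widehat\sF)$ admits the representation \eqref{eq:321}, that it is regular and irreducible, and that $\widehat\iota$ implements an equivalence of D-spaces. For $\widehat\fm$, the Radon property on $\widehat I$ is inherited from that of $\fm$ on $I$ because $\bs^{-1}$ sends compact subintervals of $\widehat I\setminus\{\widehat l,\widehat r\}$ to closed subintervals of $I\setminus\{l,r\}$, with the endpoint cases handled by Definition~\ref{DEF21}. Full support is precisely where hypothesis (DM) enters: a non-empty open $V\subset \widehat I$ either contains a point of the form $\bs(y)$ with $y\in\text{supp}[\bs]\subset\text{supp}[\fm]$ (giving $\widehat\fm(V)>0$ from the inclusion of supports), or sits at the image of an isolated point of $D^0$ (where $\fm(\{x\})>0$ by (DM)), or arises from an isolated interval $(c_n,d_n)$ (where $\fm(J^{d_n}_{c_n})>0$ by (DM)).

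For the explicit formula, the starting point is the decomposition $f=f^c+f^++f^-$ for $f\in\sF$ together with the unfolding of \eqref{eq:11} into
\[
\int_I\frac{df}{d\bs}\frac{df}{d\bs}d\bs=\int_I\Big(\frac{df^c}{d\bs_c}\Big)^2 d\mu_c+\sum_{x\in D^+}\frac{(f(x+)-f(x))^2}{\bs(x+)-\bs(x)}+\sum_{x\in D^-}\frac{(f(x)-f(x-))^2}{\bs(x)-\bs(x-)}.
\]
By Lemma~\ref{LM39}, $\widehat f$ is the restriction to $\widehat I$ of some $\widehat h\in\dot H^1_e((\widehat l,\widehat r))$, and I would take the specific $\widehat h$ that is affine on each gap $(\widehat a_k,\widehat b_k)$, so that $\int_{(\widehat l,\widehat r)}(\widehat h')^2 d\widehat x$ decomposes cleanly into $\int_{\widehat I}(\widehat h')^2 d\widehat x+\sum_k(\widehat f(\widehat a_k)-\widehat f(\widehat b_k))^2/|\widehat b_k-\widehat a_k|$. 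Every gap in $\widehat I$ corresponds bijectively either to a one-sided jump of $\bs$ at some $x\in D^\pm$ or to an isolated interval $(c_n,d_n)$ whose endpoints lie in $D\cup\{l,r\}$, and the two jump sums in the display reproduce the gap sum of \eqref{eq:321}; the continuous piece matches $\int_{\widehat I}(\widehat h')^2 d\widehat x$ under push-forward by $\bs$. The boundary condition on $\widehat f$ is then inherited from $f(j)=0$ via \eqref{eq:36-2}.

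For the remaining claims, the form in \eqref{eq:321} is recognised as the trace onto $\widehat I$ of the standard Dirichlet integral on $\langle\widehat l,\widehat r\rangle$ equipped with the appropriate reflecting/absorbing endpoint behaviour; in other words, $(\widehat\sE,\widehat\sF)$ coincides with the Dirichlet form of a time-changed Brownian motion on $\widehat I$ with speed measure $\widehat\fm$. The framework \cite[Theorem~6.2.1]{FOT11} then delivers regularity, and irreducibility follows from the connectedness of $(\widehat l,\widehat r)$ together with the positive-conductance jump terms bridging the components of $\widehat I$. Finally, the restriction $\Phi:=\widehat\iota|_{\sF_b}$ is a bijection $\sF_b\to\widehat\sF_b$ that preserves products (since $\widehat f$ is determined by the pointwise values $f(x),f(x\pm)$ and continuous extension respects products); the three identities of \eqref{eq:31} follow respectively from the continuity of $\widehat f$ (giving $\|\widehat f\|_\infty=\|f\|_\infty$), the change of variables $\widehat\fm=\fm\circ\bs^{-1}$ (giving the $L^2$ identity), and the definition \eqref{eq:38-3} (giving the energy identity).

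The main technical obstacle I anticipate is the careful bookkeeping in the second step: each gap $(\widehat a_k,\widehat b_k)$ in $\widehat I$ can originate from an isolated jump of $\bs$, from an isolated constancy interval of $\bs$, or from a combination thereof when both endpoints of $(c_n,d_n)$ lie in $D$, and one must verify through a case-by-case analysis that the three summands of $\sE(f,f)$ assemble into the two terms of \eqref{eq:321} without double-counting or loss. Once this combinatorial identification is in hand, the regularity, irreducibility and equivalence assertions reduce to standard Dirichlet-form machinery.
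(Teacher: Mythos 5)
Your overall architecture is sound and, for the full support of $\widehat{\fm}$, the representation \eqref{eq:321} via Lemma~\ref{LM39} together with affine interpolation across the gaps $(\widehat{a}_k,\widehat{b}_k)$, and the equivalence via $\Phi=\widehat{\iota}|_{\sF_b}$, it runs parallel to the paper's proof. Where you genuinely diverge is regularity: you obtain it by identifying $(\widehat{\sE},\widehat{\sF})$ with the trace of the Dirichlet integral on $\langle\widehat{l},\widehat{r}\rangle$ and invoking \cite[Theorem~6.2.1]{FOT11}. This is legitimate and is exactly the alternative proof the paper records separately as Theorem~\ref{THM19}; the proof of Theorem~\ref{THM6} itself proceeds instead by direct verification (separation of points via \cite[Lemma~1.3.12]{CF12} when $\widehat{l},\widehat{r}$ are finite, and an explicit cutoff estimate with the functions $\varphi_n$ of \eqref{eq:311} otherwise). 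The trace route buys brevity at the price of having to check that $\widehat{\fm}$ is smooth with quasi-support equal to $\widehat{I}$, which rests on the full-support and Radon claims you establish first, so there is no circularity.

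Two steps are under-justified. First, the identity of domains in \eqref{eq:321} is a genuine two-sided assertion: one must show $f\in L^2(I,\fm)$ if and only if $\widehat{f}\in L^2(\widehat{I},\widehat{\fm})$ for $f\in\sS$ with the stated boundary behaviour, and the delicate direction is the converse when $r$ (say) is the right endpoint $d_n$ of a constancy interval collapsed by $\bs$ to a non-reflecting $\widehat{r}$: then $\widehat{\fm}$ does not see the possibly infinite mass $\fm((c_n,d_n])$, and one must use $f(r)=\widehat{f}(\widehat{r})=0$ together with the constancy of $f$ on $(c_n,d_n]$ to conclude $f\equiv 0$ there (the paper's case \eqref{eq:318}); the same issue reappears in your verification of the first and second identities of \eqref{eq:31}, where ``change of variables'' and ``continuity'' alone do not suffice when $l=c_n$ or $r=d_n$. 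Second, ``connectedness plus positive conductance'' is the right intuition for irreducibility but is not a proof: the paper takes a nontrivial invariant set $\widehat{A}$, uses \cite[Proposition~2.1.6]{CF12} to place $\widehat{f}\cdot 1_{\widehat{A}}$ in $\widehat{\sF}$, locates via the continuity of its quasi-continuous version a gap $(\widehat{a}_k,\widehat{b}_k)$ across which the indicator jumps, and derives the contradiction $\widehat{\sE}(\widehat{g}_1,\widehat{g}_2)=-1/(2|\widehat{b}_k-\widehat{a}_k|)\neq 0$. You should either carry out this computation or cite a result that time change preserves irreducibility; as written the claim is asserted rather than proved.
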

\begin{proof}
To prove that $\widehat{\fm}$ is fully supported,  argue by contradiction and suppose that  for some $\widehat a<\widehat b$ with $(\widehat a, \widehat b)\cap \widehat{I}\neq \emptyset$, 
\[
	\widehat{\fm}\left((\widehat a,\widehat b)\cap \widehat{I}\right)=0.  
\]
Note that an isolated point in $\widehat{I}$ must be $\bs(x)$ for some $x\in D^0$ or $\bs((c_n,d_n))$ for some isolated interval $(c_n,d_n)$ in \eqref{eq:14}.  Hence \text{(DM)} yields that  $(\widehat a, \widehat b)\cap \widehat{I}$ contains no isolated points.  Take $\widehat{x}\in (\widehat{a},\widehat{b})\cap \widehat{I}$.  There is a sequence $\widehat{x}_p=\bs(x_p)$ with $x_p\in I$ such that $\widehat{x}_p\rightarrow \widehat{x}$.  We may and do assume that $\widehat{x}_p, \widehat{x}_{p+1},  \widehat x_{p+2}, \widehat{x}_{p+3}\in (\widehat{a},\widehat{b})$ and $\widehat{x}_p< \widehat{x}_{p+1}<\widehat{x}_{p+2}<\widehat{x}_{p+3}$ for some $p$.  It follows that $x_p<x_{p+1}<x_{p+2}<x_{p+3}$ and $\{\bs(y): y\in (x_p,x_{p+3})\}\subset (\widehat{a},\widehat{b})\cap \widehat{I}$. 
Hence $\fm((x_p,x_{p+3}))\leq \widehat{\fm}((\widehat{a},\widehat{b})\cap \widehat{I})=0$.  In view of Remark~\ref{RM22},  $(x_p,x_{p+3})\subset (c_n,d_n)$ for some $(c_n,d_n)$ in \eqref{eq:14}.  Particularly,  $$\widehat{x}_{p+1}=\bs(x_{p+1})=\bs(x_{p+2})=\widehat{x}_{p+2},$$ as leads to the contradiction with $\widehat{x}_{p+1}<\widehat{x}_{p+2}$.  
Now we turn to prove that $\widehat{\fm}$ is Radon on $\widehat{I}$.  Let $\widehat{J}:=\langle \widehat{l},  \widehat{r}\rangle$ be the interval ended by $\widehat{l}$ and $\widehat{r}$ such that $\widehat{l}\in \widehat{J}$ ($\widehat{r}\in \widehat{J}$) if and only if $\widehat{l}\in \widehat{I}$ ($\widehat{r}\in \widehat{I}$).  It suffices to show that $\widehat{\fm}([\widehat a,\widehat b])<\infty$ for any interval $[\widehat a,\widehat b]\subset \widehat{J}$.  To accomplish this,  set 
\[
	a:=\inf\{x\in I: \bs(x)\geq  \widehat{a}\},\quad b:=\sup\{x\in I: \bs(x)\leq \widehat{b}\}.  
\]
Then $\{x\in I: \bs(x)\in [\widehat{a},\widehat{b}]\}\subset [a,b]$.  Note that if $a=l$,  then $\widehat{l}\leq \widehat{a}\leq \bs(l+)=\bs(l)=\widehat{l}$.  Thus $\widehat{l}=\widehat{a}\in \widehat{I}$ must be a reflecting endpoint of $\widehat{I}$.  Accordingly $l \in I$ is reflecting.  Analogously if $b=r$,  then $r\in I$ is reflecting.  As a result,  $\widehat{\fm}([\widehat{a},\widehat{b}])=\fm\circ \bs^{-1}([\widehat{a},\widehat{b}])\leq \fm([a,b])<\infty$.  

The rest is to establish the canonical regular representation of $(I,\fm, \sE,\sF)$.  
 Firstly we formulate \eqref{eq:321}.  The expression of $\widehat{\sE}(\widehat{f},\widehat{f})$ is a consequence of  \eqref{eq:38} and \eqref{eq:39-2}.  It suffices to prove the first identity in \eqref{eq:321}.  Denote the family on its right hand side by $\widehat\sG$.  

Take $f\in \sF$ and let $\widehat{f}:=\widehat{\iota}f\in \widehat{\sF}$.  Then $\widehat{f}\in \widehat{\sS}\subset C_\mathrm{f}(\widehat{I})$.  Suppose $\widehat{r}\notin \widehat{I}$ and we will show $\widehat{f}|_{[0, \widehat{r})}\in L^2([0, \widehat{r}),\widehat{\fm})$,  so that $\widehat{f}\in L^2(\widehat{I},\widehat{\fm})$ can be obtained by virtue of $\widehat f\in C_\mathrm{f}(\widehat{I})$ and that $\widehat{\fm}$ is Radon on $\widehat{I}$.  To do this,  assume without lose of generality that $0\notin \{c_n,d_n:n\geq 1\}$.  Note that
\[
\int_{[0,\widehat{r})}\widehat{f}(\widehat{x})^2\widehat{\fm}(d\widehat{x})=\int_{\{x\in I:  \bs(x)\in [0,\widehat{r})\}} f(x)^2\fm(dx).
\]	
When $r\notin U$,  i.e.  $\bs(x)$ is strictly increasing as $x\uparrow r$,  we have
\begin{equation}\label{eq:39-3}
	\{x\in I:  \bs(x)\in [0,\widehat{r})\}=[0,  r).  
\end{equation}
Otherwise if $r=d_n$ for some $n$,  then $\bs(x)=\widehat{r}<\infty$ for $x\in (c_n,d_n]$ and 
\begin{equation}\label{eq:318}
[0, c_n)\subset 	\{x\in I:  \bs(x)\in [0,\widehat{r})\}\subset [0,c_n].  
\end{equation}
For either case we can obtain $\int_{[0,\widehat{r})}\widehat{f}(\widehat{x})^2\widehat{\fm}(d\widehat{x})<\infty$ by means of $f\in L^2(I,\fm)$.  On the other hand,  if $\widehat{r}\in\bR\setminus \widehat{I}$,  then $r$ is approachable but not reflecting.  Thus $\fm(r-)=\infty$ or $\fm(\{r\})=\infty$.  We always have $f(r)=0$ by the definition of $\sF$,  and because of \eqref{eq:36-2},  $\widehat{f}(\widehat{r})=f(r)=0$.  Therefore $\widehat{\sF}\subset \widehat{\sG}$ is concluded.  

To the contrary,  let $\widehat{f}\in \widehat{\sG}\subset \widehat{\sS}$.   Then there exists $f\in \sS$ such that $\widehat{f}=\widehat{\iota}f$.  We need to prove that $f\in \sF$.  To show $f\in L^2(I,\fm)$,  we assert that $f|_{[0,r]\cap I}\in L^2([0,r]\cap I,\fm)$. ($f|_{[l,0]\cap I}\in L^2([l,0]\cap I,\fm)$ can be obtained similarly.)  This trivially holds when $r$ is reflecting.  Suppose $r$ is not reflecting,  equivalently $\widehat{r}\notin \widehat{I}$.  In the case $r\notin U$,  \eqref{eq:39-3} tells us that 
\[
\int_{[0,r)}f(x)^2\fm(dx)=\int_{[0,\widehat{r})}\widehat{f}(\widehat{x})^2\widehat{\fm}(d\widehat{x})<\infty.  
\]
In the case \eqref{eq:318},  $\widehat{r}\in \bR\setminus \widehat{I}$ and hence $f(r)=\widehat{f}(\widehat{r})=0$ due to \eqref{eq:36-2} and $\widehat{f}\in \widehat{\sG}$.  Particularly $f(x)=0$ for any $x\in (c_n,d_n]$.  As a result one can verify that $\int_{[0,r)}f(x)^2\fm(dx)<\infty$.  Finally it suffices to prove $f(r)=0$ if $r\notin I$ is approachable.  Since $\widehat{r}$ is finite but not reflecting,  i.e.  $\widehat{r}\in \bR\setminus \widehat{I}$,  it follows from \eqref{eq:36-2} and $\widehat{f}\in \widehat{\sG}$ that $f(r)=\widehat{f}(\widehat{r})=0$.  Eventually we obtain $f\in \sF$, and $\widehat{\sG}\subset \widehat{\sF}$ is concluded. 

Repeating the argument in the proof of Theorem~\ref{LM12},  one can easily verify that $(\widehat{\sE},\widehat{\sF})$ is a Dirichlet form on $L^2(\widehat{I},\widehat{\fm})$ in the wide sense.  We turn to prove its regularity.  This will be completed by treating several cases separately as follows.  (Another proof for regularity involving transformation of time change will be shown in Theorem~\ref{THM19}.)  

\emph{Both $\widehat l$ and $\widehat r$ are finite}.   Clearly $\widehat{\sF}\subset C_\infty(\widehat{I})$,  where $C_\infty(\widehat{I})$ stands for the family of continuous functions vanishing at each endpoint not contained in $\widehat{I}$.  
 To prove the regularity of $(\widehat{\sE},\widehat{\sF})$ on $L^2(\widehat{I},\widehat{\fm})$,  in view of \cite[Lemma~1.3.12]{CF12},  we only need to prove that $\widehat{\sF}$ separates the points in $\widehat{I}$.  Fix $\widehat{x}, \widehat{y}\in \widehat{I}$ with $\widehat{x}\neq \widehat{y}$.  Let $\widehat{J}:=\langle \widehat{l},  \widehat{r}\rangle$ be the interval as in the first paragraph of this proof, and take $\widehat{f}\in C_c^\infty(\widehat{J})$ such that $\widehat f(\widehat t)=\widehat t$ for $\widehat t\in [\widehat{x}, \widehat{y}]$.  Then $\widehat{f}|_{\widehat{I}}\in \widehat{\sF}$ separates $\widehat{x}$ and $\widehat{y}$.  

\emph{Neither $\widehat l$ nor $\widehat r$ is finite}.  In this case $\widehat{l}=-\infty$,  $\widehat{r}=\infty$,  and $\widehat{J}=\bR$.  Mimicking the argument in the previous case,  we can conclude that $\widehat{\sF}\cap C_c(\widehat{I})$ is dense in $C_c(\widehat{I})$ with respect to the uniform norm.  To show the $\widehat{\sE}_1$-denseness of $\widehat{\sF}\cap C_c(\widehat{I})$ in $\widehat{\sF}$,  fix a bounded function $\widehat{f}\in \widehat{\sF}$ with $M:=\sup_{\widehat x\in \widehat{I}}|\widehat{f}(\widehat{x})|<\infty$.   Take a sequence of functions $\varphi_n\in C_c^\infty(\bR)$ such that
\begin{equation}\label{eq:311}
\begin{aligned}
&\varphi_n(\widehat{x})=1\quad \text{for }|\widehat{x}|< n;  \quad \varphi_n(\widehat{x})=0\quad \text{for } |\widehat{x}|>2n+1;\\
&|\varphi'_n(\widehat{x})|\leq 1/n,\quad n\leq |\widehat{x}|\leq 2n+1;\quad 0\leq \varphi_n(\widehat{x})\leq 1,\quad \widehat{x}\in \bR.  
\end{aligned}\end{equation}
Set $\widehat{f}_n:=\widehat{f}\cdot \varphi_n|_{\widehat{I}}$.  Since $\varphi_n|_{\widehat{I}}\in \widehat{\sF}$,  it follows that $\widehat{f}_n\in \widehat{\sF}\cap C_c(\widehat{I})$.  Clearly $\widehat{f}_n$ converges to $\widehat{f}$ in $L^2(\widehat{I},\widehat{\fm})$.  We prove that $\widehat{\sE}(\widehat{f}_n-\widehat{f},\widehat{f}_n-\widehat{f})\rightarrow 0$.  Denote by $B_R:=\{\widehat{x}: |\widehat{x}|<R\}$ for $R>0$.  In view of \eqref{eq:321},  $2\widehat{\sE}(\widehat{f}_n-\widehat{f},\widehat{f}_n-\widehat{f})$ is not greater than $A^1_n+A^2_n+A^3_n$, where
\[
	A^1_n:=\int_{\widehat{I}\cap B_{2n+1}^c} \widehat{f}'(\widehat{x})^2d\widehat{x}+\sum_{k: \widehat{b}_k>2n+1\text{ or }\widehat{a}_k<-2n-1}\frac{\left(\widehat{f}(\widehat{a}_k)-\widehat{f}(\widehat{b}_k)\right)^2}{|\widehat{b}_k-\widehat{a}_k|},
\]
\[
A^2_n:=\int_{\widehat{I} \cap (B_{2n+1}\setminus B_n)} \left(\widehat{f}'(\widehat{x})(\varphi_n(\widehat{x})-1)+\widehat{f}(\widehat{x})\varphi'_n(\widehat{x})\right)^2d\widehat{x}
\]
and 
\[
	A^3_n:=\sum_{k: n\leq |\widehat{a}_k|\leq 2n+1\text{ or }n\leq |\widehat{b}_k|\leq 2n+1}\frac{\left(\left(\widehat{f}\cdot(\varphi_n-1)\right)(\widehat{a}_k)-\left(\widehat{f}\cdot(\varphi_n-1)\right)(\widehat{b}_k)\right)^2}{|\widehat{b}_k-\widehat{a}_k|}.  
\]
Clearly $A^1_n\rightarrow 0$ as $n\rightarrow \infty$.  Regarding the second term,  we have by means of \eqref{eq:311} that
\[
	A^2_n\leq 2\int_{\widehat{I} \cap (B_{2n+1}\setminus B_n)}\left(\widehat{f}'(\widehat{x})^2+\frac{\widehat{f}(\widehat{x})^2}{n^2} \right)d\widehat{x}\rightarrow 0.  
\]
Note that $A^3_n\leq 2A^{31}_n+2A^{32}_n+2A^{33}_n$,  where
\[
	A^{31}_n:= \sum_{k: n\leq |\widehat{a}_k|\leq 2n+1\text{ or }n\leq |\widehat{b}_k|\leq 2n+1}\frac{\left(\widehat{f}(\widehat{a}_k)-\widehat{f}(\widehat{b}_k)\right)^2}{|\widehat{b}_k-\widehat{a}_k|}\rightarrow 0,
\]
\[
	A^{32}_n:=\sum_{k: n\leq |\widehat{a}_k|\leq 2n+1\text{ or }n\leq |\widehat{b}_k|\leq 2n+1}\frac{\left(\widehat{f}(\widehat{a}_k)\varphi_n(\widehat{a}_k)-\widehat{f}(\widehat{b}_k)\varphi_n(\widehat{a}_k)\right)^2}{|\widehat{b}_k-\widehat{a}_k|}\leq A^{31}_n\rightarrow 0,
\]
and 
\[
\begin{aligned}
A^{33}_n&:=\sum_{k: n\leq |\widehat{a}_k|\leq 2n+1\text{ or }n\leq |\widehat{b}_k|\leq 2n+1}\frac{\left(\widehat{f}(\widehat{b}_k)\varphi_n(\widehat{a}_k)-\widehat{f}(\widehat{b}_k)\varphi_n(\widehat{b}_k)\right)^2}{|\widehat{b}_k-\widehat{a}_k|} \\
&\leq \frac{M^2}{n^2}\sum_{k: n\leq |\widehat{a}_k|\leq 2n+1\text{ or }n\leq |\widehat{b}_k|\leq 2n+1}|\widehat{a}_k-\widehat{b}_k|\rightarrow 0.
\end{aligned}\]
Therefore $\widehat{\sE}(\widehat{f}_n-\widehat{f},\widehat{f}_n-\widehat{f})\rightarrow 0$ is concluded.  

The reminder cases can be treated analogously and we eventually conclude that $(\widehat{\sE},\widehat{\sF})$ is a regular Dirichlet form on $L^2(\widehat{I},\widehat{\fm})$.   

Next we prove the irreducibility of $(\widehat{\sE},\widehat{\sF})$.  Argue by contradiction and suppose that $\widehat A\subset \widehat{I}$ is a non-trivial $\{\widehat T_t\}$-invariant set,  i.e.  $\widehat\fm(\widehat A)\neq 0$ and $\widehat\fm(\widehat{I}\setminus \widehat A)\neq 0$,  where $\{\widehat T_t\}$ is the $L^2$-semigroup of $(\widehat{\sE},\widehat{\sF})$.  Then there exists a closed interval $0\in [\widehat l_0,\widehat r_0]\subset \widehat J$,  $\widehat{l}_0, \widehat{r}_0\in \widehat{I}$,  such that 
\begin{equation}\label{eq:320}
	\widehat\fm( [\widehat l_0,\widehat r_0] \cap \widehat A)>0,\quad \widehat \fm([\widehat l_0,\widehat r_0]\setminus \widehat A)>0.  
\end{equation}
Take another closed interval $[\widehat{l}_1,\widehat{r}_1]\subset \widehat{J}$ such that $[\widehat{l}_0,\widehat{r}_0]\subset [\widehat{l}_1,\widehat{r}_1]$ and $\widehat{l}_1<\widehat{l}_0$ (resp.  $\widehat{r}_1>\widehat{r}_0$) unless $\widehat{l}_0=\widehat{l}\in \widehat{I}$ (resp.  $\widehat{r}_0=\widehat{r}\in \widehat{I}$).  Note that $\{\widehat{h}|_{\widehat{I}}: \widehat{h}\in C_c^\infty(\widehat{J})\}\subset \widehat{\sF}$ due to Lemma~\ref{LM39}. Take a function $\widehat{f}=\widehat h|_{\widehat{I}}\in \widehat{\sF}$ with $\widehat{h}\in C_c^\infty(\widehat{J})$ and $\widehat{h}=1$ on $[\widehat l_1,\widehat r_1]$.  In view of \cite[Proposition~2.1.6]{CF12},  $\widehat{f}\cdot 1_{\widehat{A}}\in \widehat{\sF}$ and on account of Lemma~\ref{LM39},  $\widehat{f}\cdot 1_{\widehat{A}}\in \widehat{\sF}$ admits a continuous $\fm$-a.e. version denoted by $\tilde{f}_1$.  Clearly $\tilde{f}_1=0$ or $1$ pointwisely on $[\widehat l_1,\widehat r_1]\cap \widehat{I}$.  Consider the family of intervals:
\[
	\mathscr{I}:=\{(\widehat{a}_k,\widehat{b}_k)\subset [\widehat{l}_0,\widehat{r}_0]: k\geq 1\}.  
\]
When $\mathscr I$ is empty,  $\tilde{f}_1$ must be constant on $[\widehat{l}_0,\widehat{r}_0]$,  as leads to a contradiction with \eqref{eq:320}.  Now we consider $\mathscr I\neq \emptyset$ and assert that there exists $(\widehat{a}_k,\widehat{b}_k)\in \mathscr I$ such that $\tilde{f}_1(\widehat{a}_k)\neq \tilde{f}_1(\widehat{b}_k)$.  To accomplish this, set
\[
\begin{aligned}
\widehat\alpha&:=\sup\{\widehat{x}\in [\widehat{l}_0,0]\cap \widehat{I}: \tilde{f}_1(\widehat{x})\neq \tilde{f}_1(0) \}, \\
\widehat\beta&:=\inf\{\widehat{x}\in [0,\widehat{r}_0]\cap \widehat{I}: \tilde{f}_1(\widehat{x})\neq \tilde{f}_1(0)\},
\end{aligned}\]
where $\sup \emptyset:=-\infty$ and $\inf \emptyset:=\infty$.  Then \eqref{eq:320} implies that $\widehat\alpha \neq -\infty$ or $\widehat\beta \neq \infty$.  The former case leads to $\widehat\alpha=\widehat{a}_p$ for some $p$ and the latter one leads to $\widehat\beta=\widehat{b}_q$ for some $q$ by virtue of the continuity of $\tilde{f}_1$.  We have either $\tilde{f}_1(\widehat{a}_p)\neq \tilde{f}_1(\widehat{b}_p)$ or $\tilde{f}_1(\widehat{a}_q)\neq \tilde{f}_1(\widehat{b}_q)$.  Therefore the existence of such $(\widehat{a}_k,\widehat{b}_k)$ is obtained.  Without loss of generality assume that
\[
	\widehat{f}_1(\widehat{a}_k)=1,\quad \widehat{f}_1(\widehat{b}_k)=0.  
\]
Note that there exists $\widehat{\varepsilon}>0$ such that $\widehat{a}_k+\widehat{\varepsilon}<\widehat{b}_k-\widehat{\varepsilon}$ and 
\[
\begin{aligned}
	&(\widehat{a}_k-\widehat{\varepsilon},  \widehat{a}_k+\widehat{\varepsilon})\cap \widehat{I}\subset  \widehat{A},\quad \widehat{\fm}\text{-a.e.},  \\
	&(\widehat{b}_k-\widehat{\varepsilon},  \widehat{b}_k+\widehat{\varepsilon})\cap \widehat{I}\subset  \widehat{I}\setminus \widehat{A},\quad \widehat{\fm}\text{-a.e.},
\end{aligned}\]
as can be obtained by means of the continuity of $\tilde{f}_1$ and 
\[
	\{\widehat{x}\in [\widehat{l}_1,\widehat{r}_1]\cap \widehat{I}: \tilde{f}_1(\widehat{x})=1\}=\widehat{A}\cap [\widehat{l}_1,\widehat{r}_1],\quad \widehat{\fm}\text{-a.e. }
\]
When $\widehat{a}_k>\widehat{l}$ (resp.  $\widehat{b}_k<\widehat{r}$),  we may and do assume that $\widehat{a}_k-\widehat{\varepsilon}>\widehat{l}$ (resp.  $\widehat{b}_k+\widehat{\varepsilon}<\widehat{r}$).
Take another function $\widehat{g}\in \widehat{\sF}$ such that 
\[
\begin{aligned}
&\widehat{g}|_{(\widehat{a}_k-\widehat{\varepsilon}/2,  \widehat{a}_k+\widehat{\varepsilon}/2)\cap \widehat{I}}\equiv 1, \quad \widehat{g}|_{(\widehat{b}_k-\widehat{\varepsilon}/2,  \widehat{b}_k+\widehat{\varepsilon}/2)\cap \widehat{I}}\equiv 1, \\
&\widehat{g}|_{\widehat{I}\setminus \left((\widehat{a}_k-\widehat{\varepsilon},  \widehat{a}_k+\widehat{\varepsilon}) \cup (\widehat{b}_k-\widehat{\varepsilon},  \widehat{b}_k+\widehat{\varepsilon}) \right)}\equiv 0.  
\end{aligned}\]
Using \cite[Proposition~2.1.6]{CF12},  we get that $\widehat{g}_1:=\widehat{g}\cdot 1_{\widehat{A}}\in \widehat{\sF},  \widehat{g}_2:=g-\widehat{g}_1\in \widehat{\sF}$ and $\widehat{\sE}(\widehat{g}_1,\widehat{g}_2)=0$.  However,  in view of \eqref{eq:321},  a computation yields that
\[
	\widehat{\sE}(\widehat{g}_1,\widehat{g}_2)=-\frac{1}{2|\widehat{b}_k-\widehat{a}_k|}\neq 0
\]
leading to a contradiction.  

Finally it suffices to show that $(\widehat{I},\widehat{\fm},\widehat{\sE},\widehat{\sF})$ is a regular representation of $(I,\fm,\sE,\sF)$.  Denote by $\Phi$ the restriction of $\widehat{\iota}$ to $\sF_b$.  In view of $\widehat{\fm}=\fm\circ \bs^{-1}$, \eqref{eq:35} and \eqref{eq:38-3},  one may easily get that $\Phi$ is an algebra isomorphism and $\Phi \sF_b\subset \widehat{\sF}_b$.  To verify \eqref{eq:31},  we only consider the case $l\notin I, r\in I$.  The other cases can be treated analogically.  The last identity in \eqref{eq:31} is the consequence of \eqref{eq:38-3}.  If $l\neq c_n$,  then 
\[
	\widehat{I}=\bs(I)\cup \{\bs(x-):x\in D^-\}\cup \{\bs(x+): x\in D^+\}.  
\]
By means of $\widehat{\fm}=\fm\circ \bs^{-1}$ and \eqref{eq:35}, we can obtain the first and second identities in \eqref{eq:31}.  If $l=c_n$ for some $n$,  then (DK) implies that $d_n\notin D$.  Particularly,  
\[
	\widehat{I}= \bs(I\setminus [c_n,d_n])\cup \{\bs(x-):x\in D^-\}\cup \{\bs(x+): x\in D^+\}.  
\]
Note that $f(x)=0$ for $f\in \sF$ and $x\in [c_n,d_n]$ and $\widehat{f}(\widehat{l})=0$ for $\widehat{f}\in \widehat{\sF}$.  These facts,  together with $\widehat{\fm}=\fm\circ \bs^{-1}$ and \eqref{eq:35},  yield the first and second identities in \eqref{eq:31}. 
That completes the proof.
\end{proof}

\begin{remark}\label{RM38}
Without assuming (DK),  it might occur that for some $n$,
\[
	l=c_n<d_n\in D,\quad \fm(l+)+\fm(\{l\})=\infty.
\]
Then $\widehat{l}$ is isolated in $\overline{\bs([l,r])}$ with $\widehat{\fm}(\{\widehat{l}\})=\infty$ and $\widehat{l}=\widehat{a}_k$ for some $k$.  This endpoint becomes an absorbing point and we must have $\widehat{f}(\widehat{l})=0$ for $\widehat{f}\in \widehat{\sF}$.  As a consequence,  $\widehat{\sE}(\widehat{f},\widehat{f})$ contains the killing part $\widehat{f}(\widehat{b}_k)^2/(2|\widehat{b}_k-\widehat{a}_k|)$.  In other words,  the hypothesis (DK) excludes killing part from the desirable Dirichlet form. 
\end{remark}

\subsection{Canonical regularized Markov process}\label{SEC33}

It is well known that every regular Dirichlet form corresponds to a (unique) symmetric Hunt process; see,  e.g., \cite{FOT11}.  Denote by $\widehat{X}=(\widehat{X}_t)_{t\geq 0}$ the  $\widehat{\fm}$-symmetric Hunt process on $\widehat{I}$ associated to $(\widehat{\sE}, \widehat{\sF})$.  
We call it the \emph{canonical regularized Markov process associated to} $(I,\bs,\fm)$.  
In this subsection we will show that $\widehat{X}$ is a time-changed Brownian motion with speed measure $\widehat{\fm}$.  

Let $\widehat{J}:=\langle \widehat{l},\widehat{r}\rangle$ be the interval as in the proof of Theorem~\ref{THM6},  i.e.  $\widehat{l}\in \widehat{J}$ ($\widehat{r}\in \widehat{J}$) if and only if $\widehat{l}\in \widehat{I}$ ($\widehat{r}\in \widehat{I}$).  Denote by $\widehat B=(\widehat B_t)_{t\geq 0}$ the Brownian motion on $\widehat{J}$ which is absorbing at each finite open endpoint and reflecting at each finite closed endpoint.  The life time of $\widehat{B}$ is denoted by $\widehat{\zeta}$.  
The associated Dirichlet form of $\widehat B$ on $L^2(\widehat{J})$ is 
\[
\begin{aligned}
	&H^1_0(\widehat{J}):=\bigg\{\widehat f\in L^2(\widehat{J}): \widehat f\text{ is absolutely continuous on } (\widehat{l},\widehat{r}) \text{ and }\\
	&\qquad \qquad\qquad \int_{\widehat{J}}\widehat f'(\widehat x)^2d\widehat x<\infty,  \widehat f(\widehat j)=0\text{ if }\widehat j\notin \widehat{J}\text{ is finite for }\widehat j=\widehat{l} \text{ or }\widehat{r}\},\\
	&\frac{1}{2}\mathbf{D}(\widehat f,\widehat g):=\frac{1}{2}\int_{\widehat{J}}\widehat f'(\widehat x)\widehat g'(\widehat x)d\widehat x,\quad \widehat f,\widehat g\in H^1_0(\widehat{J}).  
\end{aligned}\]
Since viewed as a zero extension to $\widehat{J}$ is Radon on $\widehat{J}$,  $\widehat{\fm}$ is smooth with respect to $(\frac{1}{2}\mathbf{D}, H^1_0(\widehat{J}))$.  Clearly the quasi support of $\widehat{\fm}$ is identified with its topological support $\widehat{I}$.   Denote the PCAF of $\widehat{\fm}$ with respect to $\widehat{B}$ by $\widehat{A}=(\widehat{A}_t)_{t\geq 0}$.  Set 
\[
\widehat \tau_t:=\left\lbrace 
	\begin{aligned}
		&\inf\{s: \widehat{A}_s>t\},\quad t<\widehat{A}_{\widehat{\zeta}-},\\
		&\infty,\qquad\qquad\qquad\;\; t\geq \widehat{A}_{\widehat{\zeta}-}  
	\end{aligned}
\right.
\]
and $\check{X}_t:=\widehat{B}_{\widehat\tau_t}, \check{\zeta}:=\widehat{A}_{\widehat{\zeta}-}$.  
Then $\check{X}$ is a \emph{right process} on $\widehat{I}$ with lifetime $\check{\zeta}$,  called the \emph{time-changed Brownian motion} with speed measure $\widehat \fm$;  see,  e.g.,  \cite[Theorem~A.3.11]{CF12}.  

The following result,  together with \cite[Corollary~5.2.10]{CF12},  shows that the canonical regularized Markov process $\widehat{X}$ is identified with $\check{X}$.  This argument also gives an alternative proof for the regularity of $(\widehat{\sE},\widehat{\sF})$.  
The terminologies concerning trace Dirichlet forms are referred to in,  e.g.,  \cite[\S5.2]{CF12}. 


\begin{theorem}\label{THM19}
$(\widehat{\sE},\widehat{\sF})$ is the trace Dirichlet form of $(\frac{1}{2}\mathbf{D}, H^1_0(\widehat{J}))$ on $L^2(\widehat{I}, \widehat{\fm})$.  
\end{theorem}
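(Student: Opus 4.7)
The plan is to invoke the general trace theorem for regular Dirichlet forms (see \cite[Theorem 5.2.2]{CF12}) applied to the pair consisting of the Brownian motion $\widehat{B}$ on $\widehat{J}$ and the smooth measure $\widehat{\fm}$, and then compute the trace form explicitly using the fact that on each gap component the $\widehat{B}$-harmonic extension is nothing but linear interpolation. Since $\widehat{\fm}$, viewed as a zero extension to $\widehat{J}$, is a Radon measure and its quasi support in $\widehat{J}$ agrees with its topological support $\widehat{I}$, the general theory gives the trace Dirichlet form $(\check{\sE}, \check{\sF})$ on $L^2(\widehat{I}, \widehat{\fm})$ with extended space
\[
\check{\sF}_e = \bigl\{ \widehat{h}|_{\widehat{I}} : \widehat{h} \in \bigl(H^1_0(\widehat{J})\bigr)_e \bigr\}, \qquad \check{\sE}(\widehat{f}, \widehat{f}) = \tfrac{1}{2}\mathbf{D}(H_{\widehat{I}}\widehat{h}, H_{\widehat{I}}\widehat{h}),
\]
where $H_{\widehat{I}}\widehat{h}(\widehat{x}) = \mathbf{E}_{\widehat{x}} \widehat{h}(\widehat{B}_{\sigma_{\widehat{I}}})$ is the hitting projection onto $\widehat{I}$.

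The first step is to identify $\bigl(H^1_0(\widehat{J})\bigr)_e$: this is the space of absolutely continuous functions $\widehat{h}$ on $(\widehat{l}, \widehat{r})$ with $\widehat{h}' \in L^2((\widehat{l}, \widehat{r}))$ satisfying $\widehat{h}(\widehat{j}) = 0$ whenever $\widehat{j} \in \bR \setminus \widehat{J}$ for $\widehat{j} = \widehat{l}$ or $\widehat{r}$. Comparing with Lemma~\ref{LM39}, restricting such functions to $\widehat{I}$ yields precisely $\widehat{\sS}$ (together with the boundary vanishing condition), which matches the set on the right-hand side of \eqref{eq:321}. Combined with the $L^2(\widehat{I},\widehat{\fm})$-integrability that separates $\check{\sF}$ from $\check{\sF}_e$, this identifies $\check{\sF}$ with $\widehat{\sF}$ as described in \eqref{eq:321}.

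The second step is to compute $H_{\widehat{I}}\widehat{h}$ on $\widehat{J} \setminus \widehat{I}$. Recall $\widehat{J} \setminus \widehat{I}$ decomposes into the open intervals $(\widehat{a}_k, \widehat{b}_k)$ of \eqref{eq:15} together with, at most, an initial component $(\widehat{l}, \widehat{l}_0)$ or a terminal one $(\widehat{r}_0, \widehat{r})$ when the corresponding endpoint is open in $\widehat{J}$. On each bounded gap $(\widehat{a}_k, \widehat{b}_k)$ the Brownian motion $\widehat{B}$ exits at $\widehat{a}_k$ or $\widehat{b}_k$ with the standard linear probabilities, so $H_{\widehat{I}}\widehat{h}$ is the affine interpolation between $\widehat{h}(\widehat{a}_k)$ and $\widehat{h}(\widehat{b}_k)$. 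On any unbounded piece abutting an open endpoint, $H_{\widehat{I}}\widehat{h}$ vanishes because $\widehat{B}$ is absorbed there and $\widehat{h}$ vanishes at that endpoint by the extended-space description. On $\widehat{I}$ itself, $H_{\widehat{I}}\widehat{h} = \widehat{h}$. Summing the Dirichlet integrals over these pieces,
\[
\tfrac{1}{2}\mathbf{D}(H_{\widehat{I}}\widehat{h}, H_{\widehat{I}}\widehat{h}) = \tfrac{1}{2}\int_{\widehat{I}} \widehat{h}'(\widehat{x})^2 \, d\widehat{x} + \tfrac{1}{2} \sum_{k \geq 1} \frac{(\widehat{h}(\widehat{a}_k) - \widehat{h}(\widehat{b}_k))^2}{|\widehat{b}_k - \widehat{a}_k|},
\]
which coincides exactly with the expression for $\widehat{\sE}(\widehat{f}, \widehat{f})$ in \eqref{eq:321} under the identification $\widehat{f} = \widehat{h}|_{\widehat{I}}$.

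The main obstacle I anticipate is the careful boundary bookkeeping, namely verifying that $\widehat{\sF}$ and $\check{\sF}$ (not just their extended versions) coincide, and in particular that the vanishing condition at endpoints $\widehat{j} \in \bR \setminus \widehat{I}$ imposed in \eqref{eq:321} is exactly the condition inherited from $H^1_0(\widehat{J})$; this has essentially been worked out in the proof of Theorem~\ref{THM6} and so can be invoked here. Once these identifications are in place, the equality of quadratic forms, and hence the theorem, follows immediately.
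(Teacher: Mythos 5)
Your proposal is correct and follows essentially the same route as the paper: reduce to the trace Dirichlet form of $(\tfrac{1}{2}\mathbf{D},H^1_0(\widehat{J}))$, identify its domain through the extended Dirichlet space $H^1_e(\widehat{J})$ together with Lemma~\ref{LM39}, and compute the energy via the harmonic (linear) extension on the gaps $(\widehat{a}_k,\widehat{b}_k)$ --- the paper delegates this last computation to \cite[Theorem~2.1]{LY17}, which you instead carry out explicitly. The only slip is the claim that $H_{\widehat{I}}\widehat{h}$ \emph{vanishes} on a gap abutting a finite open endpoint (there it would in fact be the linear interpolation from $0$ and would contribute a killing term), but under hypothesis (DK) no such gap occurs, so this remark is vacuous in the present setting.
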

\begin{proof}
Denote by $(\check{\sE},\check{\sF})$ the trace Dirichlet form of $(\frac{1}{2}\mathbf{D}, H^1_0(\widehat{J}))$ on $L^2(\widehat{I}, \widehat{\fm})$.  Let $H^1_e(\widehat{J})$ be the extended Dirichlet space of $(\frac{1}{2}\mathbf{D}, H^1_0(\widehat{J}))$,  i.e. 
\begin{equation}\label{eq:320-2}
	\begin{aligned}
	&H^1_e(\widehat{J}):=\bigg\{\widehat f\text{ on }\widehat{J}:  \widehat f\text{ is absolutely continuous on }(\widehat{l},\widehat{r}) \text{ and }\\
	&\qquad \qquad\qquad \int_{\widehat{J}}\widehat f'(x)^2dx<\infty,  \widehat f(\widehat j)=0\text{ if }\widehat j\notin \widehat{J}\text{ is finite for }\widehat j=\widehat{l} \text{ or }\widehat{r}\}; 
	\end{aligned}
\end{equation}
see,  e.g.,  \cite[(3.5.10)]{CF12}.
We only need to prove
\begin{equation}\label{eq:17}
	H^1_e(\widehat{J})|_{\widehat{I}}\cap L^2(\widehat{I},\widehat{\fm})=\widehat{\sF}
\end{equation}
and for $\widehat{\varphi}\in \widehat{\sF}$,  $\check{\sE}(\widehat{\varphi},\widehat{\varphi})=\widehat{\sE}(\widehat{\varphi}, \widehat{\varphi})$.  The identity \eqref{eq:17} can be straightforwardly verified by means of Lemma~\ref{LM39} and \eqref{eq:321}.  The expression of $\check{\sE}(\widehat{\varphi},\widehat{\varphi})$ for $\widehat{\varphi}\in H^1_e(\widehat{J})|_{\widehat{I}}\cap L^2(\widehat{I},\widehat{\fm})$ can be obtained by mimicking the proof of \cite[Theorem~2.1]{LY17}.  It is identified with $\widehat{\sE}(\widehat{\varphi},\widehat{\varphi})$ expressed as \eqref{eq:321}.  That eventually completes the proof. 
\end{proof}

As obtained in Theorem~\ref{THM6},  $(\widehat{\sE},\widehat{\sF})$ is irreducible.  By virtue of Theorem~\ref{THM19},  we give a criterion for  global property of $(\widehat{\sE},\widehat{\sF})$ or $\widehat{X}$. 

\begin{corollary}\label{COR310}
The following hold:
\begin{itemize}
\item[\rm (1)] $(\widehat{\sE},\widehat{\sF})$ is transient,  if and only if either $\widehat{l}\in \bR\setminus \widehat{I}$ or $\widehat{r}\in \bR\setminus \widehat{I}$.  This is also equivalent to that either $l$ or $r$ is approachable but not reflecting.  Otherwise $(\widehat{\sE},\widehat{\sF})$ is recurrent.
\item[\rm (2)] Every singleton contained in $\widehat{I}$ is of positive capacity with respect to $\widehat{\sE}$.  Particularly,  $\widehat{X}$ is pointwisely irreducible in the sense that 
\[
\widehat{\mathbf{P}}_{\widehat{x}}(\widehat{\sigma}_{\widehat{y}}<\infty)>0
\]
for any $\widehat{x},\widehat{y}\in \widehat{I}$,  where $\widehat{\mathbf{P}}_{\widehat{x}}$ is the probability measure on the sample space of $\widehat{X}$ starting from $\widehat{x}$ and $\widehat{\sigma}_{\widehat{y}}:=\inf\{t>0: \widehat{X}_t=\widehat y\}$.  
\end{itemize}
\end{corollary}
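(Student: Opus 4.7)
The plan is to exploit Theorem~\ref{THM19}, which identifies $\widehat X$ as the time change of the standard one-dimensional Brownian motion $\widehat B$ on the connected interval $\widehat J$ by the PCAF $\widehat A$ of $\widehat{\fm}$. The second equivalence in~(1) is just an unpacking of definitions. By \eqref{eq:27-3} together with the construction of $\widehat J$ preceding Theorem~\ref{THM19}, $\widehat l\in \widehat I$ iff $\widehat l\in \widehat J$ iff $l$ is reflecting; and by Definition~\ref{DEF21}, $\widehat l\in \bR$ iff $\bs(l)>-\infty$ iff $l$ is approachable. So $\widehat l\in \bR\setminus \widehat I$ iff $l$ is approachable but not reflecting, and symmetrically for $\widehat r$.

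For the main equivalence in~(1), I would argue directly at the level of processes. The Brownian motion $\widehat B$ on $\widehat J$ is recurrent iff neither endpoint of $\widehat J$ is a finite absorbing boundary, that is, iff neither $\widehat l$ nor $\widehat r$ lies in $\bR\setminus \widehat J=\bR\setminus \widehat I$. If at least one of the endpoints is a finite absorbing boundary, then $\widehat\zeta<\infty$ almost surely; since $\widehat{\fm}$ is Radon on $\widehat I$, the PCAF $\widehat A$ is finite on $[0,\widehat\zeta)$, and hence $\widehat X$ has finite lifetime $\widehat A_{\widehat\zeta-}<\infty$ a.s., so $(\widehat\sE,\widehat\sF)$ is transient. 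Conversely, if $\widehat B$ is recurrent, then $\widehat\zeta=\infty$ a.s.; using that $\widehat{\fm}$ is a nontrivial Radon measure whose topological support coincides with $\widehat I$ by Theorem~\ref{THM6}, one obtains $\widehat A_\infty=\infty$ a.s.\ (a standard consequence of recurrence of $\widehat B$ combined with the fact that $\widehat B$ spends positive $\widehat A$-time in every neighborhood of any support point), so $\widehat X$ has infinite lifetime and inherits recurrence from $\widehat B$.

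For~(2), the key input is that $\widehat B$ on the connected interval $\widehat J$ hits every point before dying with positive probability, from any starting point: writing $\sigma^B_{\widehat y}:=\inf\{t>0:\widehat B_t=\widehat y\}$, one has $\widehat{\mathbf{P}}_{\widehat x}(\sigma^B_{\widehat y}<\widehat\zeta)>0$ for all $\widehat x,\widehat y\in \widehat J$ by classical one-dimensional hitting estimates (valid for arbitrary combinations of reflecting or absorbing endpoints). Given $\widehat x,\widehat y\in \widehat I$, on the event $\{\sigma^B_{\widehat y}<\widehat\zeta\}$ the value $\widehat A_{\sigma^B_{\widehat y}}$ is finite and equals $\widehat\sigma_{\widehat y}$, whence $\widehat{\mathbf{P}}_{\widehat x}(\widehat\sigma_{\widehat y}<\infty)>0$. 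Thus every singleton of $\widehat I$ is non-polar for $\widehat X$, and by the standard capacity–polarity correspondence for regular symmetric Dirichlet forms (see, e.g., \cite[Chapter~4]{FOT11}) it carries positive $\widehat\sE$-capacity.

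The principal technical subtlety throughout is the control of the PCAF $\widehat A$ at the lifetime — finite in the transient case, infinite in the recurrent case — which hinges on $\widehat{\fm}$ being a Radon measure with full topological support $\widehat I$, as ensured by Theorem~\ref{THM6}.
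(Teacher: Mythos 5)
Your route is genuinely more hands-on than the paper's: the paper disposes of both parts by citing trace-form theorems from \cite{CF12} (Theorem~5.2.5 for preservation of transience under time change, Theorem~2.2.11 for the boundary classification of $\widehat{B}$, and Theorems~3.5.6(1), 5.2.8(2) for the capacity statement), whereas you argue directly on the sample paths of the time-changed Brownian motion. Your treatment of the second equivalence in (1), of the converse direction of (1), and of part (2) is essentially sound; in (2) the point you use implicitly is that on $\{\sigma^B_{\widehat{y}}<\widehat{\zeta}\}$ the range of $\widehat{B}$ up to $\sigma^B_{\widehat{y}}$ is a compact subinterval of $(\widehat{l},\widehat{r})$, so $\widehat{A}_{\sigma^B_{\widehat{y}}}<\infty$ indeed follows from $\widehat{\fm}$ being Radon on $\widehat{I}$.

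The forward direction of (1), however, contains a genuine gap. You assert that if some endpoint of $\widehat{J}$ is a finite absorbing boundary then $\widehat{A}_{\widehat{\zeta}-}<\infty$ a.s.\ ``since $\widehat{\fm}$ is Radon on $\widehat{I}$'', and you derive transience from finiteness of the lifetime of $\widehat{X}$. Radonness of $\widehat{\fm}$ on $\widehat{I}$ only bounds $\widehat{A}$ over compact subsets of $\widehat{I}$; it gives no control near a finite endpoint $\widehat{r}\in\bR\setminus\widehat{I}$, where $\widehat{\fm}$ may be unbounded --- and this is exactly what happens in one of the two cases you must cover, namely $r$ approachable but not regular, i.e.\ $\fm(r-)=\infty$. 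In that case $\widehat{A}_{\widehat{\zeta}-}$ can be infinite with positive or even full probability: for instance $\widehat{I}=(0,1)$ with $\widehat{\fm}(d\widehat{y})=\bigl(\widehat{y}(1-\widehat{y})\bigr)^{-3}d\widehat{y}$ makes both endpoints natural for the time-changed process, so $\widehat{X}$ has infinite lifetime a.s.\ while $(\widehat{\sE},\widehat{\sF})$ is nevertheless transient. Thus finite lifetime is neither a consequence of your hypotheses nor the right mechanism: the implication ``$\widehat{A}_t<\infty$ for all $t<\widehat{\zeta}$ hence $\widehat{A}_{\widehat{\zeta}-}<\infty$'' is a non sequitur. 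The repair is to argue, as the paper does, that transience is preserved under the time change (the trace form of a transient form on a set of full quasi-support is transient, \cite[Theorem~5.2.5]{CF12}), which combined with your boundary classification of $\widehat{B}$ gives the claim in all cases. A minor additional point: even where the lifetime is finite, passing from non-conservativeness to transience uses the transience/recurrence dichotomy for irreducible forms, so you should invoke the irreducibility established in Theorem~\ref{THM6}.
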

\begin{proof}
In view of \cite[Theorem~5.2.5]{CF12},  $(\widehat{\sE},  \widehat{\sF})$ is transient if and only if so is $(\frac{1}{2}\mathbf{D}, H^1_0(\widehat{J}))$.  This,  together with \cite[Theorem~2.2.11]{CF12},  yields the first condition equivalent to the transience of $(\widehat{\sE},  \widehat{\sF})$.  The second equivalent condition is obvious.  Another assertion is the consequence of \cite[Theorems~3.5.6~(1) and 5.2.8~(2)]{CF12}.  That completes the proof. 
\end{proof}

\subsection{Homeomorphisms between regular representations}\label{SEC34}

We turn to show that all regular representations of $(I,\fm,\sE,\sF)$ are essentially homoemorphic.  In other words,  a Markov process corresponding to certain regular representation must be a homeomorphic image of $\widehat{X}$.  The lemma below is useful for obtaining this result.  

\begin{lemma}\label{LM310}
Let $\{\widehat{F}_n: n\geq 1\}$ be an $\widehat{\sE}$-nest and $\widehat{K}$ be a compact subset of $\widehat{I}$.  Then $\widehat{K}\subset \widehat{F}_n$ for some $n\geq 1$.  
\end{lemma}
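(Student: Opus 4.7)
The plan is to combine the pointwise irreducibility of $\widehat X$ from Corollary~\ref{COR310}~(2) with the quantitative defining property of an $\widehat{\sE}$-nest. I would argue by contradiction: assume $\widehat K\not\subset \widehat F_n$ for every $n\ge 1$. Since $(\widehat{\sE},\widehat{\sF})$ is regular on $L^2(\widehat I,\widehat{\fm})$ by Theorem~\ref{THM6} and $\widehat K$ is compact, standard Dirichlet form theory (see, e.g., \cite{CF12}) translates the nest property into $\mathrm{Cap}_1(\widehat K\setminus \widehat F_n)\to 0$ as $n\to\infty$, where $\mathrm{Cap}_1$ denotes the $1$-capacity of $(\widehat{\sE},\widehat{\sF})$.

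The heart of the argument is to promote the pointwise positivity of $\mathrm{Cap}_1(\{\widehat x\})$ from Corollary~\ref{COR310}~(2) into the uniform lower bound
\[
c:=\inf_{\widehat x\in \widehat K}\mathrm{Cap}_1(\{\widehat x\})>0.
\]
For this I would use the time-change identification in Theorem~\ref{THM19}: since $\widehat X$ is the time change of Brownian motion on $\widehat J$ by the PCAF of $\widehat{\fm}$, its $1$-potential operator inherits from the Brownian one a kernel that is finite, jointly continuous, and strictly positive on $\widehat I\times \widehat I$. Standard potential theory for symmetric Hunt processes then exhibits $\widehat x\mapsto \mathrm{Cap}_1(\{\widehat x\})$ as a continuous positive function on $\widehat I$, hence bounded below on the compact $\widehat K$.

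With $c>0$ in hand, monotonicity of capacity finishes the proof: whenever $\widehat K\setminus \widehat F_n$ is nonempty it contains some point $\widehat x$, and so $\mathrm{Cap}_1(\widehat K\setminus \widehat F_n)\ge \mathrm{Cap}_1(\{\widehat x\})\ge c$, contradicting $\mathrm{Cap}_1(\widehat K\setminus \widehat F_n)\to 0$. Therefore $\widehat K\subset \widehat F_n$ for some $n$.

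The main technical obstacle is the uniform positivity of singleton capacities on $\widehat K$. Should the appeal to continuity of the $1$-potential kernel prove delicate at boundary points of $\widehat I$ that may belong to $\widehat K$, a robust alternative is to construct, for each $\widehat x\in \widehat K$, an explicit test function in $\widehat{\sF}$ equal to $1$ at $\widehat x$ whose $\widehat{\sE}_1$-norm is controlled through the representation \eqref{eq:321} in terms of the local behaviour of $\widehat I$ near $\widehat x$; the resulting upper bound on $1/\mathrm{Cap}_1(\{\widehat x\})$ can then be made continuous in $\widehat x$, yielding uniform positivity on compacts.
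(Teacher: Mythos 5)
Your overall strategy is sound but genuinely different from the paper's, and it is heavier than it needs to be. The paper's proof never passes to capacities: it works locally, showing for each $\widehat{x}\in\widehat{K}$ that some neighbourhood of $\widehat{x}$ in $\widehat{I}$ is contained in some $\widehat{F}_n$ (otherwise a sequence $\widehat{x}_n\in\widehat{I}\setminus\widehat{F}_n$ converging to $\widehat{x}$ would force every $\widehat{f}\in\bigcup_k\widehat{\sF}_{\widehat{F}_k}$ to vanish at $\widehat{x}$ by continuity of the elements of $\widehat{\sF}$, contradicting the $\widehat{\sE}_1$-density defining a nest), and then uses compactness only to extract a finite subcover. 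The only potential-theoretic input is the $\widehat{\sE}_1$-continuity of the evaluation $\widehat{f}\mapsto\widehat{f}(\widehat{x})$ at one point at a time, i.e.\ a pointwise Sobolev bound of the type \eqref{eq:81}; no uniformity over $\widehat{K}$ is required. Your route instead converts the nest property into $\mathrm{Cap}_1(\widehat{K}\setminus\widehat{F}_n)\to 0$ (legitimate, by the standard characterization of nests for the regular form of Theorem~\ref{THM6}) and then needs the uniform bound $\inf_{\widehat{x}\in\widehat{K}}\mathrm{Cap}_1(\{\widehat{x}\})>0$. That bound is true here, but obtaining it via continuity and strict positivity of the diagonal $r_1(\widehat{x},\widehat{x})$ of the $1$-resolvent density of the time-changed Brownian motion, together with the identity $\mathrm{Cap}_1(\{\widehat{x}\})=1/r_1(\widehat{x},\widehat{x})$, imports a nontrivial amount of quasidiffusion potential theory that the paper nowhere establishes; you would have to supply it (it does hold, since $r_1$ factors as $u_1(\widehat{x}\wedge\widehat{y})v_1(\widehat{x}\vee\widehat{y})/W_1$ with $u_1,v_1$ continuous and positive on $\widehat{I}$). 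So: correct in outline, but at the cost of a substantial unproved input where the paper gets by with continuity of $\widehat{\sF}$-functions and a finite subcover.

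One concrete flaw in your fallback: a test function $u\in\widehat{\sF}$ with $u(\widehat{x})=1$ and controlled $\widehat{\sE}_1$-norm yields $\mathrm{Cap}_1(\{\widehat{x}\})\leq\widehat{\sE}_1(u,u)$, i.e.\ an \emph{upper} bound on the capacity and hence a \emph{lower} bound on $1/\mathrm{Cap}_1(\{\widehat{x}\})$ --- the opposite of what you need. The correct elementary substitute is an inequality valid for \emph{all} $u\in\widehat{\sF}$, namely $|u(\widehat{x})|^2\leq C_{\widehat{K}}\,\widehat{\sE}_1(u,u)$ uniformly for $\widehat{x}$ in a compact set, exactly as in \eqref{eq:81}; this gives $\mathrm{Cap}_1(\{\widehat{x}\})\geq 1/C_{\widehat{K}}$ and would close your argument without any appeal to resolvent densities.
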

\begin{proof}
We first prove that for any $\widehat x\in \widehat K$,  there exists $\varepsilon>0$ such that
\begin{equation}\label{eq:76}
	(\widehat x-\varepsilon, \widehat x+\varepsilon)\cap (\widehat I\setminus \widehat{F}_n)=\emptyset,\quad \text{for some }n\geq 1.  
\end{equation}
Argue by contradiction and take $\widehat x\in \widehat K$ such that $(\widehat x-\varepsilon,\widehat x+\varepsilon)\cap (\widehat I\setminus \widehat{F}_n)\neq \emptyset$ for any $\varepsilon>0$ and $n\geq 1$.  Particularly,  there exists a sequence $\widehat x_n\in \widehat I\setminus \widehat{F}_n$ such that $\widehat x_n\rightarrow \widehat x$.   Take $\widehat{f}\in \widehat{\sF}_{\widehat{F}_k}:=\{f\in \widehat{\sF}: f=0\text{ on }\widehat I\setminus \widehat{F}_k\}$ for some $k$.  Clearly $\widehat{f}|_{\widehat I\setminus \widehat{F}_n}\equiv 0$ for $n\geq k$.  Since every function in $\widehat{\sF}$ is continuous on $\widehat{I}$,  it follows that $\widehat{f}(\widehat x)=\lim_{k<n\rightarrow \infty}\widehat{f}(\widehat x_n)=0$.  Particularly,  
\[
	\cup_{k\geq 1}\widehat{\sF}_{\widehat{F}_k}\subset \{\widehat{f}\in \widehat{\sF}: \widehat{f}(\widehat x)=0\}.  
\]
The family on the left hand side is $\widehat{\sE}_1$-dense in $\widehat{\sF}$ while the right one is not.  This leads to a contradiction.  As a result it follows from \eqref{eq:76} that for any $\widehat x\in \widehat K$,  there exists $\varepsilon>0$ such that $(\widehat x-\varepsilon, \widehat x+\varepsilon)\cap \widehat I\subset \widehat{F}_n$ for some $n$.  Using the compactness of $\widehat K$,  we can obtain that $\widehat K\subset \widehat{F}_n$ for some $n$.  That completes the proof. 
\end{proof}

Before stating the result,  we prepare some notations and terminologies.  
Let $(\sE^1,\sF^1)$ be a Dirichlet form on $L^2(E_1,\fm_1)$.  Take another measurable space $(E_2,\mathcal{B}(E_2))$ and a measurable map $j: (E_1,\mathcal{B}(E_1))\rightarrow (E_2,\mathcal{B}(E_2))$.  Define $\fm_2:=\fm_1\circ j^{-1}$,  the image measure of $\fm_1$ under $j$.  Then 
\[
	j_*: L^2(E_2,\fm_2)\rightarrow L^2(E_1,\fm_1),\quad f\mapsto j_*f:=f\circ j
	\]
 is an isometry.  Define $\sF^2:=\{f\in L^2(E_2,\fm_2): j_*f\in \sF^1\}$ and 
 \[
 	\sE^2(f,g):=\sE^1(j_*f, j_*g),\quad f,g\in \sF^1. 
 \]
 If $j_*$ maps $L^2(E_2,\fm_2)$ onto $L^2(E_1,\fm_1)$,  then $(\sE^2,\sF^2)$ is a Dirichlet form on $L^2(E_2,\fm_2)$,  which is called the \emph{the image Dirichlet form} of $(\sE^1,\sF^1)$ under $j$.  
 Particularly,  if both $E_1$ and $E_2$ are locally compact separable metric spaces and $j$ is an a.e. homeomorphism,  i.e.  there is an $\fm_1$-negligible set $N_1$ and an $\fm_2$-negligible set $N_2$ such that $j: E_1\setminus N_1\rightarrow E_2\setminus N_2$ is a homeomorphism,   then $j_*$ is surjective.  


\begin{theorem}\label{THM311}
Let $(I',\fm',\sE',\sF')$ be a regular representation of $(I,\fm, \sE,\sF)$.  Then there exists a unqiue $\sE'$-polar set $N'\subset I'$ and a unique homeomorphism $j': \widehat{I} \rightarrow I'\setminus N'$ such that $(\sE',\sF')$ is the image Dirichlet form of $(\widehat\sE,\widehat \sF)$ under $j'$.  
\end{theorem}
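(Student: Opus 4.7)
\medskip
\noindent\textbf{Proof proposal.}
The strategy is to combine Fukushima's classical theorem on regular representations \cite{F71} with Lemma~\ref{LM310}, which is the key technical ingredient: the former yields a quasi-homeomorphism between any two regular representations, and the latter is precisely what turns it into a full homeomorphism on $\widehat I$. Once that main step is carried out, the remaining work (transfer of measure, verification of the image-form identity, uniqueness via continuity and point separation) is essentially bookkeeping.

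First I would invoke the main theorem of \cite{F71} (see also \cite[\S3.1]{CF12}): since $(\widehat I, \widehat\fm, \widehat\sE, \widehat\sF)$ is a regular representation of $(I, \fm, \sE, \sF)$ by Theorem~\ref{THM6}, and $(I', \fm', \sE', \sF')$ is another one by hypothesis, the two are quasi-homeomorphic. This produces an $\widehat{\sE}$-nest $\{\widehat F_n\}_{n\ge 1}$, an $\sE'$-nest $\{F'_n\}_{n\ge 1}$, and a homeomorphism $j': \bigcup_n \widehat F_n \to \bigcup_n F'_n$ transferring measure via $\fm' = \widehat\fm \circ (j')^{-1}$ on $\bigcup_n F'_n$ and identifying the two forms by pullback.

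The decisive step is to upgrade this partial homeomorphism to one defined on all of $\widehat I$, and this is where Lemma~\ref{LM310} does the essential work. Since $\widehat I$ is locally compact, each $\widehat x \in \widehat I$ admits a compact neighborhood $\widehat K$; Lemma~\ref{LM310} then provides some $n$ with $\widehat K \subset \widehat F_n$, so $\widehat x \in \widehat F_n$. Hence $\bigcup_n \widehat F_n = \widehat I$ pointwise, $j'$ is a homeomorphism from $\widehat I$ onto $\bigcup_n F'_n$, and its complement $N' := I' \setminus \bigcup_n F'_n$ is $\sE'$-polar by definition of an $\sE'$-nest. The image-form identity therefore extends to the desired statement between $\widehat I$ and $I' \setminus N'$.

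Uniqueness is a point-separation argument. If $(N'_1, j'_1)$ and $(N'_2, j'_2)$ both realize the conclusion compatibly with the algebra isomorphism inherited from the regular-representation data, then for every $f' \in \sF'_b$ the pullbacks $f' \circ j'_1$ and $f' \circ j'_2$ agree as elements of $\widehat\sF_b$. By Lemma~\ref{LM39} each element of $\widehat\sF$ has a continuous representative on $\widehat I$, and $\widehat\fm$ is fully supported by Theorem~\ref{THM6}, so this $\widehat\fm$-a.e.\ equality is in fact pointwise on $\widehat I$. Since $(\sE', \sF')$ is regular, $\sF'_b \cap C_c(I')$ separates the points of $I'$; consequently $j'_1 \equiv j'_2$ on $\widehat I$ and $N'_1 = N'_2$. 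The main obstacle, already isolated in Lemma~\ref{LM310}, is precisely the passage from quasi-homeomorphism to strict homeomorphism.
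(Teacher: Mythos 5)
Your overall strategy matches the paper's (quasi-homeomorphism from Fukushima's theorem, upgraded to a genuine homeomorphism via Lemma~\ref{LM310}), but there is a genuine gap at the decisive step. A quasi-homeomorphism only gives homeomorphisms $\widehat{F}_n\rightarrow F'_n$ between the individual nest elements; it is not ``a homeomorphism $j':\bigcup_n\widehat{F}_n\rightarrow\bigcup_n F'_n$'' as you state. From Lemma~\ref{LM310} you correctly get $\bigcup_n\widehat{F}_n=\widehat{I}$ and (since every point of $\widehat{I}$ has a compact neighbourhood absorbed into some $\widehat{F}_n$) the continuity of $j'$ on all of $\widehat{I}$. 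But you never address the continuity of $(j')^{-1}$ on $I'\setminus N'$, and a continuous bijection between locally compact spaces need not be a homeomorphism (compare $[0,1)\rightarrow S^1$). For the inverse you would need the mirror statement on the $I'$ side: every compact $K'\subset I'$ satisfies $K'\cap\bigl(\bigcup_n F'_n\bigr)\subset F'_n$ for some $n$. This does \emph{not} follow from Lemma~\ref{LM310}, whose proof relies on the fact that every function in $\widehat{\sF}$ is genuinely continuous on $\widehat{I}$ (a special feature of the canonical representation via Lemma~\ref{LM39}); for a general regular representation the elements of $\sF'$ are only quasi-continuous, so the same argument cannot be run directly for $\sE'$-nests in $I'$.

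The paper closes exactly this hole by routing both representations through a common intermediate one $(\tilde{I},\tilde{\fm},\tilde{\sE},\tilde{\sF})$ with continuous projection maps $\widehat{\gamma}:\tilde{I}\rightarrow\widehat{I}$ and $\gamma':\tilde{I}\rightarrow I'$: a compact $K'\subset I'$ is pulled back to a compact $\tilde{K}\subset\tilde{I}$, pushed forward to a compact $\widehat{K}=\widehat{\gamma}(\tilde{K})\subset\widehat{I}$, absorbed into some $\widehat{F}_n$ by Lemma~\ref{LM310}, and then transported back to give $K'\cap I'_0\subset F'_n$; this yields that $j'$ is a local homeomorphism, hence (being bijective) a homeomorphism. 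You should either reproduce this transfer argument or supply some other proof that $j'$ is open onto its image. Your uniqueness argument via point separation is workable, though it tacitly assumes both candidate pairs are compatible with the same algebra isomorphism; the paper instead derives uniqueness of $N'$ from the fact that singletons in $\widehat{I}$ are non-polar (Corollary~\ref{COR310}), which is cleaner.
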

\begin{proof}
In view of \cite[Theorem~A.4.9]{FOT11},  there exists a regular representation $(\tilde{I}, \tilde{\fm}, \tilde{\sE},\tilde{\sF})$ such that both $(I',\fm',\sE',\sF')$ and $(\widehat{I},\widehat{\fm},\widehat{\sE},\widehat{\sF})$ are equivalent to it by isomorphisms $\Phi'$ and $\widehat{\Phi}$, and 
\[
	\Phi'\left(\sF'\cap C_\infty(I') \right)\subset \tilde{\sF}\cap C_\infty(\tilde{I}),\quad \widehat\Phi(\widehat\sF\cap C_\infty(\widehat I) )\subset \tilde{\sF}\cap C_\infty(\tilde{I}).
\]
Applying \cite[Lemma~A.4.8]{FOT11} to $\widehat{\Phi}$ and repeating its proof,  we can obtain a continuous map $\widehat{\gamma}: \tilde{I}\rightarrow \widehat{I}$,  an $\tilde{\sE}$-nest $\{\tilde{F}^1_n: n\geq 1\}$ and an $\widehat{\sE}$-nest $\{\widehat{F}_n\}$ such that $f\circ \widehat{\gamma}\in C_\infty(\tilde{I})$ for any $f\in C_\infty(\widehat{I})$ and
\begin{equation}\label{eq:319}
	\widehat{\gamma}_n:=\widehat{\gamma}|_{\tilde{F}^1_n}: \tilde{F}^1_n\rightarrow \widehat{F}_n,\quad n\geq 1
\end{equation}
are homeomorphisms.  In addition,  $(\widehat{\sE},\widehat{\sF})$ is the image Dirichlet form of $(\tilde{\sE},\tilde{\sF})$ under $\widehat{\gamma}$.  Analogously there is a continuous map $\gamma': \tilde{I}\rightarrow I'$,  an $\tilde{\sE}$-nest $\{\tilde{F}^2_n: n\geq 1\}$ and an $\tilde{\sE}'$-nest $\{F'_n: n\geq 1\}$ such that $f\circ \gamma'\in C_\infty(\tilde{I})$ for any $f\in C_\infty(I')$ and
\begin{equation}\label{eq:320-3}
	\gamma'_n:=\gamma'|_{\tilde{F}^2_n}: \tilde{F}^2_n \rightarrow F'_n,\quad n\geq 1
\end{equation}
are homeomorphisms.  In addition,  $(\sE',\sF')$ is the image Dirichlet form of $(\tilde{\sE},\tilde{\sF})$ under $\gamma'$.  Without loss of generality we may and do assume $\tilde{F}^1_n=\tilde{F}^2_n=:\tilde{F}_n$.  (Otherwise we can replace $\tilde{F}^1_n$ and $\widehat{F}_n$ by $\tilde{F}^1_n\cap \tilde{F}^2_n$ and  $\widehat{F}_n \cap \widehat{\gamma}(\tilde{F}^2_n)$ in \eqref{eq:319}.  The maps in \eqref{eq:320-3} can be treated similarly.) On account of Corollary~\ref{COR310}~(2),  we have 
\[
	\widehat{I}=\cup_{n\geq 1}\widehat{F}_n.
\]
Set $I'_0:=\cup_{n\geq 1}F'_n$ and $\tilde{I}_0:=\cup_{n\geq 1}\tilde{F}_n$.  

Secondly,  we assert that for any compact subset $K'$ of $I'$,  it holds that $K'\cap I'_0\subset F'_n$ for some $n\geq 1$.  
To do this,  take $f\in C_\infty(I')$ such that $f=1$ on $K'$,  and set $\tilde{K}:=\gamma'^{-1}(K')$.  Since $\gamma'$ is continuous,  $\tilde{K}$ is closed in $\tilde{I}$.  In addition,  
\[
	\tilde{K}\subset \{\tilde{x}\in \tilde{I}: f\circ \gamma'(\tilde{x})=1\}
\]
and $f\circ \gamma'\in C_\infty(\tilde{I})$ yields that the right hand side is a subset of a compact set in $\tilde{I}$.  Particularly $\tilde{K}$ is compact in $\tilde{I}$.  It follows from the continuity of $\widehat{\gamma}$ that $\widehat{K}:=\widehat{\gamma}(\tilde{K})$ is compact in $\widehat{I}$.  Applying Lemma~\ref{LM310} to $\widehat{K}$ and using homeomorphisms \eqref{eq:319} and \eqref{eq:320-3},  we get that $\widehat{K}\subset \widehat{F}_n$ for some $n$ and thus 
\[
	K'\cap I'_0= \gamma'(\tilde{K}\cap \tilde{I}_0)\subset \gamma'(\widehat \gamma^{-1}(\widehat{K})\cap \tilde{I}_0)\subset \gamma'_n(\widehat{\gamma}^{-1}_n(\widehat{F}_n))=F'_n.  
\]  

Denote by $\widehat{q}$ the inverses of $\widehat{\gamma}|_{\tilde{I}_0}$.  We show that $\widehat{q}$ is continuous on $\widehat{I}$,  so that $\widehat{q}: \widehat{I}\rightarrow \tilde{I}_0$
is a homeomorphism and $(\tilde{\sE},\tilde{\sF})$ is the image Dirichlet form of $(\widehat{\sE},\widehat{\sF})$ under $\widehat{q}$.  To this end,  take an arbitrary precompact open subset  $\widehat U$ of $\widehat{I}$.  Since $\widehat{U}\subset \widehat{F}_n$ for some $n$,  it follows that $\widehat q|_{\widehat{U}}=\widehat{\gamma}^{-1}_n|_{\widehat{U}}$ is continuous.  Consequently $\widehat{q}$ is continuous on $\widehat{I}$.  


Set $N':=I'\setminus I'_0$ and 
\begin{equation}\label{eq:321-2}
	j': \widehat{I}\rightarrow I'_0,\quad \widehat{x}\mapsto \gamma'(\widehat{q}(\widehat{x})). 
\end{equation}
Then $j'$ is a continuous bijection and its restriction $j'|_{\widehat{F}_n}: \widehat{F}_n\rightarrow F'_n$ is a homoemorphism.  We prove that $j'$ is a local homeomorphism.  Indeed,  let $\widehat{x}\in\widehat{I}$ and $x':=j'(\widehat{x})$.  Since $I'$ is locally compact,  we take a precompact open set $V$ in $I'$ with $x'\in V$ and set $V':=V\cap I'_0$.  Then $V'$ is an open neighbourhood of $x'$ in $I'_0$ and the second step yields that 
\[
	V'\subset \overline{V}\cap I'_0\subset F'_n
\]
for some $n$,  where $\overline{V}$ is the closure of $V$ in $I'$.  Since $j'$ is continuous and $j'|_{\widehat{F}_n}$ is a homoemorphism,  it follows that $\widehat U:=j'^{-1}(V')\subset \widehat{F}_n$ is an open neighbourhood of $\widehat{x}$ in $\widehat{I}$ and $j'|_{\widehat{U}}: \widehat{U}\rightarrow V'$ is a homeomorphism.  Therefore \eqref{eq:321-2} is a local homeomorphism.  Note that $j'$ is also bijective.  Eventually we conclude that $j'$ is a homeomorphism.  

Making use of that both $j'$ and $\widehat{q}$ are homeomorphisms,  one can find that $\gamma'|_{\tilde{I}_0}: \tilde{I}_0\rightarrow I'_0$ is also a homeomorphism and hence can easily verify that $(\sE',\sF')$ is the image Dirichlet form of $(\widehat{\sE},\widehat{\sF})$ under $j'$.  

Finally we argue the uniqueness of $(N',j')$.  Take another pair $(N'_1, j'_1)$ with the same properties.  In view of Corollary~\ref{COR310}~(2),  every singleton contained in $\widehat{I}$ is not $\widehat{\sE}$-polar.  Since $j'_1$ is a homeomorphism,  it follows that every singleton contained in $I'\setminus N'_1$ is not $\sE'$-polar.  Consequently $I'\setminus N'_1\subset I'\setminus N'$ because $N'$ is $\sE'$-polar.  The contrary $I'\setminus N'\subset I'\setminus N'_1$ also holds true by a similar argument.  Therefore $N'=N'_1$.  The identity $j'=j'_1$ is obvious because $j'(\widehat{X})$ and $j'_1(\widehat{X})$ are the identical Markov process associated to $(\sE',\sF')$.  That completes the proof.  
\end{proof}

The following corollary is immediate from this theorem.

\begin{corollary}
Let $(I_i,\fm_i, \sE^i,\sF^i)$ be the regular representations of $(I,\fm,  \sE,\sF)$ for $i=1,2$.  Then there are $\sE^i$-polar sets $N_i\subset I_i$ for $i=1,2$ and a homeomorphism $j: I_1\setminus N_1\rightarrow I_2\setminus N_2$ such that $(\sE^2,\sF^2)$ is the image Dirichlet form of $(\sE^1,\sF^1)$ under $j$.  
\end{corollary}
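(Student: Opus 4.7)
The plan is to deduce this from Theorem~\ref{THM311} by applying it twice and then composing the resulting homeomorphisms. Concretely, viewing $(I_i,\fm_i,\sE^i,\sF^i)$ as a regular representation of $(I,\fm,\sE,\sF)$ and invoking Theorem~\ref{THM311} for each $i=1,2$, I would obtain an $\sE^i$-polar set $N_i'\subset I_i$ together with a homeomorphism
\[
	j'_i:\widehat{I}\longrightarrow I_i\setminus N_i'
\]
such that $(\sE^i,\sF^i)$ coincides with the image Dirichlet form of $(\widehat{\sE},\widehat{\sF})$ under $j'_i$.

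Next, I would set $N_i:=N_i'$ (they are the same polar sets as produced by Theorem~\ref{THM311}) and define
\[
	j:=j'_2\circ (j'_1)^{-1}:I_1\setminus N_1\longrightarrow I_2\setminus N_2.
\]
Being a composition of homeomorphisms, $j$ is a homeomorphism. To verify the image Dirichlet form property, one just chases definitions: for $f\in L^2(I_2,\fm_2)$, the pullback $f\circ j$ corresponds to the pullback $(f\circ j'_2)\circ (j'_1)^{-1}\circ j'_1=f\circ j'_2$ relative to $\widehat{I}$, so the isometry $j_*$ for $j$ factors through the isometries for $j'_1$ and $j'_2$. Consequently $f\in \sF^2$ iff $f\circ j\in \sF^1$, and the quadratic forms match through $(\widehat{\sE},\widehat{\sF})$. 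In particular $\fm_2=\fm_1\circ j^{-1}$ because both equal the push-forward of $\widehat{\fm}$.

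There is essentially no obstacle here; the content has already been exhausted by Theorem~\ref{THM311}. The only points to be careful about are that the $N_i'$ produced in Theorem~\ref{THM311} are indeed $\sE^i$-polar (so that $j$ is defined off a polar set as required) and that the image-form construction is associative under composition of a.e.\ homeomorphisms, which is immediate from the definition given immediately before Theorem~\ref{THM311}.
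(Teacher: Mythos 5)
Your proposal is correct and is exactly the argument the paper intends: the corollary is stated as ``immediate from this theorem,'' and the intended route is precisely to apply Theorem~\ref{THM311} to each $(I_i,\fm_i,\sE^i,\sF^i)$ and compose $j:=j'_2\circ (j'_1)^{-1}$. Your additional checks (polarity of the $N_i$ and functoriality of the image-form construction under composition) are the right points to verify and pose no difficulty.
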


Denote by $X'$ the $\fm'$-symmetric Hunt process associated to $(\sE',\sF')$.  This process would be called a \emph{regularized Markov process associated to} $(I,\bs,\fm)$ if no confusions caused.  In addition, we call $N'$ obtained in Theorem~\ref{THM311} the \emph{essentially exceptional set} of $X'$ or $(I',\fm',\sE',\sF')$,  and $\bs':=j'^{-1}$,  the inverse of $j'$,  the \emph{scale function} of  $X'$ or $(I',\fm',\sE',\sF')$.  Particularly,  the essentially exceptional set of $\widehat{X}$ is empty,   and its scale function is $\widehat{\bs}$.  We wish to state emphatically that in general, the essentially exceptional set is not necessarily empty.  

\begin{example}
Consider that $I=[0,1)$,  $\fm$ is the Lebesgue measure on $[0,1)$ and $\bs(x)$ is continuous and strictly increasing on $I$ such that $\bs(0)=0$ and $\bs(1)=\infty$.  Then $(I,\fm, \sE,\sF)$ corresponds to the regular diffusion on $I$ with scale function $\bs$,  speed measure $\fm$ and no killing inside.  

Making use of \cite[Theorem~2.1]{LY19},  one can get that $(\sE,\sF)$ is regular on not only $L^2(I,\fm)$ but also $L^2([0,1],\fm)$.  Particularly,  $([0,1],\fm,\sE,\sF)$ is a regular representation of $(I,\fm,  \sE,\sF)$,  and its essentially exceptional set is $\{1\}$.  
\end{example}

\section{Unregularized Markov process and Ray-Knight compactification}\label{SEC31}

In this section we will apply two ``inverse" maps of $\bs$ to the canonical regular representation $(\widehat{I},\widehat{\bs},\widehat{\sE},\widehat{\sF})$,  so that two different Markov processes are obtained.  The first one is a homeomorphism,  and the resulting Markov process is still a regularized Markov process $X^*$ associated to $(I,\bs,\fm)$ (see \S\ref{SEC34}).  While the second maps $\bs(x-)$ or $\bs(x+)$ also to $x$ whenever $x\in D^-$ or $x\in D^+$,  and eventually a Markov process $\dot X$,  for which the strong Markov property may fail, is obtained.  Their importance lies on that the Dirichlet form of $\dot X$ is exactly $(\sE,\sF)$ and that $X^*$ is the Ray-Knight compactification of $\dot X$.  

\subsection{Regularized Markov process}\label{SEC41}

To find out a homeomorphic ``inverse" of the map
\[
	\bs: I\rightarrow \overline{\bs([l,r])},
\]
we meet two difficulties.  Firstly,  the second and third components of $\overline{\bs([l,r])}$ expressed as \eqref{eq:34} admit no corresponding parts contained in $I$.  Secondly,  the intervals in \eqref{eq:14} are mapped to a single point under $\bs$.  The natural way to tackle them is to change $I$ into another space $I^*$ with the help of following two transformations:
\begin{itemize}
\item[(1)] The first one,  due to \cite{S79} and called \emph{scale completion},  makes the completion $\bar{I}^\rho$ of $I$ with respect to the metric
\[
	\rho(x,y):=|\tan^{-1} x-\tan^{-1}y|+|\tan^{-1}\bs(x)-\tan^{-1}\bs(y)|.  
\] 
This transformation divides $x\in D^0$ into $\{x-,x,x+\}$ and $x\in D \setminus D^0$ into $\{x-,x+\}$.  Note that for a sequence $I\setminus D\ni  x_n\uparrow x\in D$ (resp.  $I\setminus D\ni x_n\downarrow x\in D$) in $I$,  we have $\rho(x_n,  x-)\rightarrow 0$ (resp.  $\rho(x_n, x+)\rightarrow 0$). 
\item[(2)] The second is \emph{darning}.  Let $\overline{(c_n,d_n)}^\rho$ be the closure of $(c_n,d_n)$ in $\bar{I}^\rho$,  where $(c_n,d_n)$ appears in \eqref{eq:13}.  The transformation of darning collapses each $\overline{(c_n,d_n)}^\rho$ into an abstract point $p^*_n$ and the neighbourhoods of $p^*_n$ are determined by those of $\overline{(c_n,d_n)}^\rho$ in $\bar{I}^\rho$.  We refer readers to page 347 of \cite{CF12} for more details about this operation.  Denote by $\bar{I}^{\rho,*}$ the space obtained by darning $\bar{I}^\rho$.  
\end{itemize}



\begin{example}\label{EXA31}
This example is to explain the above two transformations.  Consider
\[
	I=[0,3], \quad \bs(x)=\left\lbrace\begin{aligned}
	&x,\quad 0\leq x< 1,\\
	&1,\quad 1\leq x< 2,\\
	&x,\quad 2\leq x\leq 3.  
	\end{aligned}  \right.  
\]
Then $U=(c_1,d_1)=(1,2)$ and $\bs$ has only one discontinuous point $2$.  The transformation of scale completion divides $2$ into $\{2-, 2+\}$ and $$\bar{I}^\rho=[0,2-]\cup [2+, 3],$$  where $[0,2-]$ (resp.  $[2+,3]$) is homoemorphic to the usual interval $[0,2]$ (resp.  $[2,3])$ but $2-$ and $2+$ are distinct points in $\bar{I}^\rho$.  The closure $\overline{(c_1,d_1)}^\rho$ of $(c_1,d_1)$ is $[1,2-]$ and the darning transformation collapses it into an abstract point $p^*_1$,  which can be viewed as the usual point $1$.  In other words,  $\bar{I}^{\rho,*}$ may be treated as $[0,1]\cup [2+,3]$.  

We should point out that the abstract point $p^*_n$ might not always be understood as a usual point.  For example,  consider $I=[0,1]$ and that $\bs$ is the standard Cantor function on $[0,1]$,  i.e.  $\bs(x)=\int_0^x 1_{K^c}(y)dy$ where $K\subset [0,1]$ is the standard Cantor set.  Then $\bar{I}^\rho=[0,1]$,  and darning transformation collapses each open interval in the decomposition of $[0,1]\setminus K$  into an abstract point $p^*_n$.  If all $p^*_n$ are viewed as usual points of zero Lebesgue measure,  $\bar{I}^{\rho,*}$,  not a singleton,  must be identified with a negligible interval.  This is incomprehensible. 
\end{example}

By means of the transformations of scale completion and darning,  one can define a censored map
$\bs^*: \bar{I}^{\rho,*}\rightarrow [-\infty, \infty]$ as follows:
\begin{equation}\label{eq:32}
	\bs^*(x^*):=\left\lbrace
	\begin{aligned}
	&\bs(x),\quad \quad\; x^*=x\in \left(I\setminus (D\setminus D^0)\right) \cap \bar{I}^{\rho,*},  \\
	&\bs(x\pm),\quad\;\; x^*=x\pm\in \bar{I}^{\rho,*}\text{ with }x\in D,\\
	&\bs(c_n+),\quad\, x^*=p^*_n\text{ for each } n\geq 1.
	\end{aligned}
	\right. 
\end{equation}
It is straightforward to verify the following.

\begin{lemma}\label{LM32}
The map $\bs^*$ is a homeomorphism between $\bar{I}^{\rho,*}$ and $\overline{\bs([l,r])}$.  
\end{lemma}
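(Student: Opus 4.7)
The plan is to establish $\bs^*$ as a continuous bijection between two compact Hausdorff spaces, from which the homeomorphism conclusion is automatic. First I would verify compactness of both: $\overline{\bs([l,r])}$ is closed in the compact space $\overline{\bR}$ by construction, and the embedding $x\mapsto(\tan^{-1}x,\tan^{-1}\bs(x))$ shows that $(I,\rho)$ is totally bounded, so $\bar{I}^\rho$ is compact; darning preserves compactness and Hausdorffness once one checks that the collapsed sets $\overline{(c_n,d_n)}^\rho$ are pairwise disjoint closed subsets of $\bar{I}^\rho$. This disjointness in turn reduces to the observation that any shared boundary point $d_n=c_m$ of two components of $U$ must lie in $D$---otherwise $\bs$ would be constant across it, contradicting the maximality of the connected components---and scale completion then separates this boundary point into two distinct sheets assigned respectively to $(c_n,d_n)$ and $(c_m,d_m)$.

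For continuity, my plan is to first extend $\bs$ to a map $\tilde\bs:\bar{I}^\rho\to\overline{\bs([l,r])}$ by setting $\tilde\bs(x\pm):=\bs(x\pm)$ at the split points. Continuity of $\tilde\bs$ is immediate from the definition of $\rho$, whose second summand $|\tan^{-1}\bs(x)-\tan^{-1}\bs(y)|$ directly controls the distance between images. Since $\tilde\bs$ is constant on each $\overline{(c_n,d_n)}^\rho$ with common value $\bs(c_n+)=\bs(d_n-)$, the universal property of the quotient topology produces a continuous descent to $\bar{I}^{\rho,*}$, which inspection of \eqref{eq:32} identifies with $\bs^*$. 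Surjectivity of $\bs^*$ is then immediate from matching the three branches of \eqref{eq:32} with the three parts of \eqref{eq:34}.

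For injectivity, the plan is a monotonicity argument: $\tilde\bs$ is weakly increasing in the natural order on $\bar{I}^\rho$ (which extends the order on $I$ by $x-<x<x+$), so any two points with equal $\tilde\bs$-image must bracket an interval of constancy; such an interval of constancy of $\bs$ in $I$ is contained in some component $(c_n,d_n)$ of $U$, and the corresponding closed interval in $\bar{I}^\rho$ sits inside $\overline{(c_n,d_n)}^\rho$, hence collapses to the single point $p_n^*$ in $\bar{I}^{\rho,*}$. The main obstacle is the bookkeeping at boundary points $c_n, d_n$: one must track, depending on whether each belongs to $D^+$, $D^-$, or neither, whether it remains a separate point of $\bar{I}^{\rho,*}$ or is absorbed into $p_n^*$, and then verify that the definition \eqref{eq:32} assigns the correct value in every case so that the matching between the two spaces is consistent and no two distinct equivalence classes are sent to the same value.
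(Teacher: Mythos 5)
The paper offers no proof of this lemma at all --- it is introduced with ``It is straightforward to verify the following'' --- so there is no argument of the author's to compare yours against; your proposal supplies exactly the verification the paper omits, and its overall architecture (continuous bijection from a compact space onto a Hausdorff space, hence a homeomorphism) is the natural one and is correct. The continuity step via the second summand of $\rho$ and descent through the quotient, the surjectivity by matching the branches of \eqref{eq:32} against \eqref{eq:34}, and the injectivity via monotonicity of the extension $\tilde\bs$ plus the observation that an interval of constancy lies in a single component $(c_n,d_n)$ all check out, including the boundary bookkeeping you flag: a shared endpoint $d_n=c_m$ of adjacent components must lie in $D$, so scale completion separates the two closures, and the case analysis over $c_n,d_n\in D^+$, $D^-$, or neither confirms that $[\xi,\eta]$ with $\tilde\bs$ constant is absorbed into a single $\overline{(c_n,d_n)}^\rho$.

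One small logical wrinkle: you do not actually need $\bar I^{\rho,*}$ to be Hausdorff, only compact --- the standard theorem requires the \emph{domain} compact and the \emph{codomain} Hausdorff, and $\overline{\bs([l,r])}\subset\overline{\bR}$ is Hausdorff for free. This matters because your justification of Hausdorffness of the darned space (``the collapsed sets are pairwise disjoint closed subsets'') is not a sufficient condition in general; one needs the decomposition to be upper semicontinuous. Here that can be checked, or Hausdorffness can be read off a posteriori from the homeomorphism itself, but the cleanest fix is simply to drop the claim. You should also say a word about the ideal endpoints: when $l\notin I$ or $r\notin I$ the completion $\bar I^\rho$ acquires boundary points whose images must account for $\bs(l),\bs(r)\in\overline{\bs([l,r])}$ in the surjectivity matching (the paper's own definition \eqref{eq:32} is silent on these as well).
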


Let $\br^*$ be the inverse of this homeomorphism $\bs^*$.  Set
\[
I^*:=\br^*(\widehat{I}), \quad \fm^*:=\widehat{\fm}\circ \bs^*
\]
and 
\[
\begin{aligned}
	&\sF^*:=\{f^*=\widehat{f}\circ \bs^*: \widehat{f}\in \widehat{\sF}\},\\
	&\sE^*(f^*,g^*):=\widehat{\sE}(\widehat{f},\widehat{g}),\; f^*=\widehat{f}\circ \bs^*,  g^*=\widehat{g}\circ \bs^*\in \sF^*.
	\end{aligned}
\]
The result blow,  whose proof is trivial due to Lemma~\ref{LM32},  gives another regular representation of $(I,\fm,\sE,\sF)$.  

\begin{theorem}\label{THM43}
The quadratic form $(\sE^*,\sF^*)$ is a regular and irreducible Dirichlet form on $L^2(I^*,\fm^*)$.  Particularly,  $(I^*,\fm^*, \sE^*,\sF^*)$ is a regular representation of $(I,\fm,\sE,\sF)$.  
\end{theorem}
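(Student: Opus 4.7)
The plan is to transport the regular representation $(\widehat{I}, \widehat{\fm}, \widehat{\sE}, \widehat{\sF})$ established in Theorem~\ref{THM6} back to $I^*$ through the homeomorphism supplied by Lemma~\ref{LM32}. Since all the hard analytic work (regularity, irreducibility, fully supportedness of the speed measure, identification of the isomorphism $\widehat{\iota}$) has already been done in Theorem~\ref{THM6}, the present statement should reduce essentially to a change-of-variables bookkeeping argument.

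First I would restrict the homeomorphism from Lemma~\ref{LM32} to obtain a homeomorphism $\bs^*|_{I^*}\colon I^* \to \widehat{I}$ with inverse $\br^*|_{\widehat{I}}$. By the definition $\fm^* = \widehat{\fm}\circ \bs^*$, the pullback $g\mapsto g\circ \bs^*$ is a unitary isomorphism from $L^2(\widehat{I},\widehat{\fm})$ onto $L^2(I^*,\fm^*)$. By the definitions of $\sF^*$ and $\sE^*$, the form $(\sE^*,\sF^*)$ is then precisely the image Dirichlet form of $(\widehat{\sE},\widehat{\sF})$ under the homeomorphism $\br^*|_{\widehat{I}}$, in the sense of the discussion preceding Theorem~\ref{THM311}.

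Second, I would invoke the general principle that all relevant structural properties of a Dirichlet form are preserved under an a.e.\ homeomorphism of the state space: closedness, the Markovian property, regularity (since $f\mapsto f\circ \bs^*$ sends $C_c(\widehat{I})$ onto $C_c(I^*)$ and preserves uniform norms and the form $\widehat{\sE}_1$), irreducibility (since it is characterized by the $L^2$-semigroup, which transports cleanly), and fully supportedness of the reference measure. Each of these is granted for the hatted objects by Theorem~\ref{THM6}, so they hold verbatim for the starred objects.

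Finally, for the regular representation property, I would compose the algebra isomorphism $\widehat{\iota}\colon \sF_b \to \widehat{\sF}_b$ from Theorem~\ref{THM6} with the pullback isomorphism $\widehat{f}\mapsto \widehat{f}\circ \bs^*$ to obtain $\Phi^*\colon \sF_b \to \sF^*_b$, $f\mapsto (\widehat{\iota}f)\circ\bs^*$; this is clearly a bijective algebra homomorphism, and the three preservation identities in \eqref{eq:31} follow immediately by chaining the identities already verified for $\widehat{\iota}$ in Theorem~\ref{THM6} with the $L^2$- and $L^\infty$-isometry induced by $\bs^*$. The only step that requires any care at all is checking that the auxiliary points introduced by scale completion and darning (i.e.\ the points $x\pm$ and $p_n^*$) are correctly matched to the corresponding points of $\overline{\bs([l,r])}$ under $\bs^*$; this is already guaranteed by \eqref{eq:32} and Lemma~\ref{LM32}, so no genuine obstacle arises.
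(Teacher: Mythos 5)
Your proposal is correct and is essentially the argument the paper intends: the paper declares the proof ``trivial due to Lemma~\ref{LM32},'' meaning precisely that one transports the canonical regular representation $(\widehat{I},\widehat{\fm},\widehat{\sE},\widehat{\sF})$ of Theorem~\ref{THM6} through the homeomorphism $\bs^*$, exactly as you do. Your write-up simply makes explicit the change-of-variables bookkeeping (unitarity of the pullback, preservation of regularity and irreducibility under a homeomorphism, and composition of the algebra isomorphisms) that the paper leaves implicit.
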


Denote by $X^*$ the Hunt process associated to $(\sE^*,\sF^*)$.  It is a regularized Markov process associated to $(I,\bs,\fm)$ as defined in \S\ref{SEC34}.  In addition,  the essentially exceptional set of $X^*$ is empty and its scale function is exactly $\bs^*$.

\subsection{Unregularized Markov process}\label{SEC9}

Let us turn to apply another ``inverse" map of $\bs$ to $\widehat{X}$.  For simplification we assume that $\bs$ is strictly increasing.  (Otherwise one may operate transformation of darning,  and then a similar discussion follows.) In this case (DK) trivially holds and (DM) reads as that $\fm$ is fully supported on $I$ and $\fm(\{x\})>0$ for $x\in D^0$; see Remark~\ref{RM22}.  In addition,  
\begin{equation}\label{eq:41}
\widehat{I}=\bs(I)\cup \{\bs(x-): x\in D^-\} \cup \{\bs(x+):x\in D^+\}. 
\end{equation}
Note incidentally that \eqref{eq:41} may not hold if $\bs$ is not strictly increasing.  

Recall that $(\sE,\sF)$ defined as \eqref{eq:25} turns to be a Dirichlet form on $L^2(I, \fm)$ in the wide sense in Theorem~\ref{LM12}.  The lemma below states more facts about it. 


\begin{lemma}\label{LM81}
\begin{itemize}
\item[(1)] If $\bs$ is strictly increasing,  then $(\sE,\sF)$ is a Dirichlet form on $L^2(I,\fm)$.
\item[(2)] Assume that $\bs$ is strictly increasing.  Then $(\sE,\sF)$ is regular on $L^2(I,\fm)$ if and only if $\bs$ is continuous. 
\end{itemize}
\end{lemma}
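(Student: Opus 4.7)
The plan is to tackle (1) via a density argument and (2) by relating $\sF$ to $\sF\cap C_c(I)$ through the jump part of $\sE$. Given Theorem~\ref{LM12}, the only property of a Dirichlet form missing from $(\sE,\sF)$ is the density of $\sF$ in $L^2(I,\fm)$. Since $\bs$ is strictly increasing, the set $U$ of \eqref{eq:13} has no interior components, the decomposition \eqref{eq:14} is empty, and so no function in $\sF$ is forced to be constant on a non-degenerate subinterval. The relation \eqref{eq:41} shows that $\widehat I\setminus\bs(I)=\{\bs(x-):x\in D^-\}\cup\{\bs(x+):x\in D^+\}$ is countable, and each of its points has $\widehat\fm$-mass zero because $\bs^{-1}\{\bs(x\pm)\}=\emptyset$ under strict monotonicity. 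Consequently the pull-back $\widehat f\mapsto \widehat f\circ\bs$ is an isometric isomorphism from $L^2(\widehat I,\widehat\fm)$ onto $L^2(I,\fm)$ which agrees with $\widehat\iota^{-1}$ on $\widehat\sF$. The density of $\widehat\sF$ in $L^2(\widehat I,\widehat\fm)$, guaranteed by the regularity of $(\widehat\sE,\widehat\sF)$ from Theorem~\ref{THM6}, then transfers to the desired density of $\sF$ in $L^2(I,\fm)$, proving part~(1).

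For the easy direction of (2), if $\bs$ is additionally continuous then $D=\emptyset$, hence $\widehat I=\bs(I)$ by \eqref{eq:41}, and a continuous strictly increasing bijection between the intervals $I$ and $\widehat I$ is automatically a homeomorphism. The pull-back by $\bs$ thus yields an isomorphism of locally compact D-spaces that preserves both $C_c$ and $C_\infty$, so the regularity of $(\widehat\sE,\widehat\sF)$ from Theorem~\ref{THM6} lifts to regularity of $(\sE,\sF)$ on $L^2(I,\fm)$.

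For the converse, suppose $\bs$ has a discontinuity; by symmetry we may take $x_0\in D^+$ so that $\mu^+_d(\{x_0\})=\bs(x_0+)-\bs(x_0)>0$. The plan is to exhibit $f\in\sF$ with $f(x_0+)\neq f(x_0)$ by freely prescribing $f'_{\bs,+}(x_0)\neq 0$ in the building block $\sS^+_d$ of Remark~\ref{RM26} and then truncating to ensure $L^2(I,\fm)$-integrability. Any $g\in\sF\cap C_c(I)$ is ordinarily continuous on $I$ and therefore satisfies $g(x_0+)=g(x_0)$. The jump summand in $\sE(f-g,f-g)$ coming from the atom of $\mu^+_d$ at $x_0$ in \eqref{eq:11} then equals
\[
(f-g)'_{\bs,+}(x_0)^2\,\mu^+_d(\{x_0\})=\frac{(f(x_0+)-f(x_0))^2}{\bs(x_0+)-\bs(x_0)}>0,
\]
a positive quantity independent of $g$, so $\sF\cap C_c(I)$ cannot be $\sE_1$-dense in $\sF$ and regularity fails. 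The main obstacle is in part~(1), namely to confirm that the pull-back by $\bs$ carries $\widehat\sF$ precisely onto $\sF$ (not merely something that spans a dense subspace of $L^2(I,\fm)$); this will require carefully tracking how $\widehat\iota$ handles the added points $\bs(x\pm)$ via \eqref{eq:35} together with the explicit description of $\widehat\sF$ furnished by \eqref{eq:321}.
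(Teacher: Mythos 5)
Your proposal is correct, and while part~(1) and the forward direction of part~(2) follow essentially the paper's own route (the paper transfers density and regularity through the regular representation $(I^*,\fm^*,\sE^*,\sF^*)$, which is the homeomorphic image of your $(\widehat I,\widehat\fm,\widehat\sE,\widehat\sF)$ under $\bs^*$, so the two phrasings are interchangeable), your converse in part~(2) is genuinely different. The paper first proves the local Sobolev-type bound \eqref{eq:81}, $\sup_{x\in[\alpha,\beta]}|f(x)|^2\leq C_{\alpha,\beta}\,\sE_1(f,f)$ for $\alpha,\beta\in(l,r)\setminus D$, and deduces that $\sE_1$-convergence implies uniform convergence on $[\alpha,\beta]$, so that an $\sE_1$-limit of functions in $\sF\cap C_c(I)$ must be continuous there --- contradicting the existence of a discontinuous element of $\sF$. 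You instead read off from the atom of $\mu^+_d$ at $x_0$ in \eqref{eq:11} that for every continuous $g$ the quantity $2\sE(f-g,f-g)$ is bounded below by $(f(x_0+)-f(x_0))^2/(\bs(x_0+)-\bs(x_0))$, a positive constant independent of $g$, so $\sF\cap C_c(I)$ cannot be $\sE_1$-dense. Your argument is more direct and avoids the embedding inequality; the paper's inequality \eqref{eq:81} is the more robust tool (it controls the whole sup norm, not just one jump), but for this lemma both work. Two small points: first, both arguments share the unproved step of exhibiting some $f\in\sF$ with $f(x_0+)\neq f(x_0)$ --- the paper asserts it without construction too --- and your sketch via $\sS^+_d$ plus truncation is fine, though it is cleaner to pull back a function $\widehat h\in C_c^\infty((\widehat l,\widehat r))$ with $\widehat h(\bs(x_0))\neq\widehat h(\bs(x_0+))$, which lands in $\sF$ by Lemma~\ref{LM39} and the argument in the proof of Theorem~\ref{THM6}. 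Second, the ``main obstacle'' you flag at the end is vacuous: that $\widehat\iota^{-1}$ carries $\widehat\sF$ onto $\sF$ is the definition \eqref{eq:38-3}, so nothing further needs to be tracked there.
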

\begin{proof}
\begin{itemize}
\item[(1)] Since $\bs$ is strictly increasing,  only the transformation of scale completion need be operated to obtain $(I^*,\bs^*,\fm^*)$ as in \S\ref{SEC41}.  Meanwhile $I^*$ can be treated as the union of $I$ and another set of at most countably many points,  $\fm^*|_I=\fm$ and $\fm^*(I^*\setminus I)=0$.  In view of the regularity of $(\sE^*,\sF^*)$ obtained in Theorem~\ref{THM6},  one can easily conclude the denseness of $\sF$ in $L^2(I,\fm)$.  
\item[(2)]  We first assert that for $\alpha,\beta\in (l,r)\setminus D$ with $\alpha<\beta$,  there is a constant $C_{\alpha,\beta}$ depending on $\alpha$ and $\beta$ such that
\begin{equation}\label{eq:81}
	\sup_{x\in [\alpha, \beta]}|f(x)|^2\leq C_{\alpha,\beta}\sE_1(f,f),\quad f\in \sF.
\end{equation}
In fact,  write $f=f^c+f^++f^-\in \sF$ and let $g^c=df^c/d\mu_c,  g^\pm=df^\pm/d\mu^\pm_d$. Take $x,y\in [\alpha, \beta]$ with $x<y$.  We have
\[
	|f^c(x)-f^c(y)|^2\leq \mu_c((x,y)) \int_x^y \left(g^c\right)^2d\mu_c \leq 2|\bs(y)-\bs(x)| \sE(f,f).   
\]
Analogous assertions hold for $f^\pm$.  Thus
\[
	|f(x)-f(y)|^2\leq 6|\bs(y)-\bs(x)|\sE(f,f).  
\]
It follows that for any $x,y\in [\alpha,\beta]$, 
\[
f(x)^2\leq 2f(y)^2+2|f(x)-f(y)|^2\leq 2f(y)^2 +12|\bs(\beta)-\bs(\alpha)|\sE(f,f),
\]
and hence for any $y\in [\alpha,\beta]$,
\[
\sup_{x\in [\alpha,\beta]}f(x)^2\leq 2f(y)^2 +12|\bs(\beta)-\bs(\alpha)|\sE(f,f).  
\]
Integrating both sides by $\fm$ on $[\alpha,\beta]$ we arrive at \eqref{eq:81}.  Now we prove the equivalence  between the regularity of $(\sE,\sF)$ and the continuity of $\bs$.  If $\bs$ is continuous,  then $(\sE,\sF)=(\sE^*,\sF^*)$,  $I=I^*$ and $\fm=\fm^*$.  In view of Theorem~\ref{THM6},  $(\sE,\sF)$ is regular on $L^2(I,\fm)$.  To the contrary,  argue by contradiction.  Assume that $\bs$ is not continuous at $x\in (l,r)$ while $(\sE,\sF)$ is regular.   Then we may take a function $f\in \sF$ such that $f$ is not continuous at $x$ and a sequence $f_n\in \sF\cap C_c(I)$ such that $\sE_1(f_n-f,f_n-f)\rightarrow 0$.   Take $\alpha,\beta\in (l,r)\setminus D$ such that $\alpha<x<\beta$.   Applying \eqref{eq:81} to $f_n-f$,  we get that $f_n$ converges to $f$ uniformly on $[\alpha,\beta]$.  Hence $f$ is continuous on $[\alpha,\beta]$,  as violates  the discontinuity of $f$ at $x$.  
\end{itemize}
That completes the proof. 
\end{proof}

Write the canonical regularized Markov process associated to $(I,\bs,\fm)$ as
\[
	\widehat X=\left\{\widehat{\Omega},\widehat{\sF}_t, \widehat{X}_t, (\widehat{\mathbf{P}}_{\widehat{x}})_{\widehat{x}\in \widehat{I}}, \widehat \zeta\right\},
\]
where $\widehat{\Omega}$ is the sample space,  $\widehat{\sF}_t$ is the filtration,  $\widehat{\mathbf{P}}_{\widehat{x}}$ is the probability measure on $\Omega$ with $\widehat{\mathbf{P}}_{\widehat{x}}(\widehat{X}_0=\widehat{x})=1$ and $\widehat{\zeta}$ is the lifetime of $\widehat{X}$.  Consider the ``inverse" map $\br: \widehat{I}\rightarrow I$:
\[
\br(\bs(x)):=x \text{ for }x\in I,\quad \br(\bs(x\pm)):=x \text{ for }x\in D^\pm.
\] 
In view of \eqref{eq:41},  $\br$ is well defined.  With $\widehat{X}$ and $\br$ at hand,  we proceed to put forward another simple Markov process on $I$.   Define $\dot \Omega:=\left\{\omega\in \widehat{\Omega}: \widehat{X}_0(\omega)\in \bs(I)\right\}\in \widehat\sF_0$ and 
\[
\begin{aligned}
	&\dot\sF_t:=\widehat{\sF}_t\cap \dot \Omega=\{A\cap \dot \Omega: A\in \widehat{\sF}_t\},\quad \dot X_t(\omega):=\br(\widehat{X}_t(\omega)), \; \omega\in \dot\Omega,    \\
	&\dot{\mathbf{P}}_x:=\widehat{\mathbf{P}}_{\bs(x)}|_{\dot \Omega},\; x\in I, \quad \dot \zeta(\omega):=\widehat{\zeta}(\omega),\; \omega\in \dot\Omega.  
\end{aligned}\]
Clearly $(\dot\sF_t)_{t\geq 0}$ is a right continuous filtration,  and $(\dot X_t)_{t\geq 0}$ is a family of random variables on $\dot \Omega$ adapted to $(\dot\sF_t)_{t\geq 0}$.  The following result is inspired by \cite[Theorem~3.6]{S79}. 


\begin{theorem}\label{THM82}
Assume that $\bs$ is strictly increasing but not continuous. 
The stochastic process 
\[
	\dot X=\{\dot \Omega,  \dot\sF_t, \dot X_t, (\dot{\mathbf{P}}_x)_{x\in I}, \dot\zeta\}
\]
is an $\fm$-symmetric continuous Markov process on $I$,  for which the strong Markov property fails.  Furthermore the Dirichlet form of $\dot X$ on $L^2(I,\fm)$ is $(\sE,\sF)$.  
\end{theorem}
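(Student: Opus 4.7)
The plan is to transfer properties of $\widehat X$ to $\dot X$ via the folding map $\br: \widehat I \to I$, exploiting that the \emph{ghost set} $G := \widehat I \setminus \bs(I) = \{\bs(x-) : x \in D^-\} \cup \{\bs(x+) : x \in D^+\}$ is countable and $\widehat\fm$-null (since $\bs$ is strictly increasing and $\widehat\fm = \fm \circ \bs^{-1}$, so the only atoms of $\widehat\fm$ sit on $\bs(I)$).

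First I would verify path continuity. By Theorem~\ref{THM19}, $\widehat X$ is a time-changed Brownian motion on $\widehat I$, and its c\`adl\`ag jumps necessarily cross a gap of $\widehat I$ inside $\widehat J$. Strict monotonicity of $\bs$ forces each such gap to be either $(\bs(x-), \bs(x))$ for some $x \in D^-$ or $(\bs(x), \bs(x+))$ for some $x \in D^+$, and in both cases the two endpoints lie in the $\br$-fibre over $x$. Hence $\br$ collapses every jump of $\widehat X$, while on $\bs(I)$ the map $\br = (\bs|_I)^{-1}$ is continuous (inverse of a strictly increasing function). Consequently $t \mapsto \dot X_t = \br(\widehat X_t)$ is continuous.

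Next I would identify the semigroup and the Dirichlet form. The pullback $\iota f := f \circ \br$ is an isometric isomorphism $\iota : L^2(I, \fm) \to L^2(\widehat I, \widehat\fm)$, and on $\sF$ it agrees with $\widehat\iota$ up to $\widehat\fm$-null modification on $G$, so it restricts to a bijection $\sF \to \widehat\sF$ intertwining $\sE$ and $\widehat\sE$ by \eqref{eq:38-3}. Setting $\dot T_t f(x) := \widehat{\mathbf E}_{\bs(x)}[f(\br(\widehat X_t))] = \widehat T_t(\iota f)(\bs(x))$ gives $\dot T_t = \iota^{-1} \widehat T_t \iota$ as an $L^2$-semigroup, so $\dot T_t$ is $\fm$-symmetric, and since $(\widehat\sE, \widehat\sF)$ is the Dirichlet form of $\widehat T_t$ by Theorem~\ref{THM6}, the Dirichlet form of $\dot T_t$ on $L^2(I, \fm)$ is exactly $(\sE, \sF)$. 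The simple Markov property of $\dot X$ then follows by conditioning: the Markov property of $\widehat X$ yields
\[
\dot{\mathbf{E}}_x[f(\dot X_{t+s}) \mid \dot\sF_t] = \widehat T_s(\iota f)(\widehat X_t), \quad \dot{\mathbf{P}}_x\text{-a.s.,}
\]
which coincides with $\dot T_s f(\dot X_t) = \widehat T_s(\iota f)(\bs(\dot X_t))$ on the event $\{\widehat X_t \in \bs(I)\}$. This event has full probability for each fixed $t > 0$, by countability and $\widehat\fm$-nullness of $G$ together with the absence of atoms of the law of $\widehat X_t$ at points of zero $\widehat\fm$-mass --- a consequence of the representation $\widehat X_t = \widehat B_{\widehat\tau_t}$ (Theorem~\ref{THM19}) combined with a Fubini argument using $\widehat\fm$-symmetry.

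Finally, for the failure of strong Markov, pick any $x \in D$; for concreteness take $x \in D^0$, and let $\sigma_x := \inf\{t > 0 : \dot X_t = x\}$. Path continuity and the gap structure force $\widehat X_{\sigma_x} = \bs(x-)$ under $\dot{\mathbf{P}}_y$ when $y < x$ approaches $x$ from the left, while $\widehat X_{\sigma_x} = \bs(x+)$ under $\dot{\mathbf{P}}_{y'}$ when $y' > x$ approaches from the right; the two post-$\sigma_x$ laws of $\dot X$ are then the $\br$-projections of $\widehat{\mathbf{P}}_{\bs(x-)}$ and $\widehat{\mathbf{P}}_{\bs(x+)}$, which differ (e.g.\ through distinct first-exit probabilities from a neighbourhood of $x$, computable by the pointwise irreducibility of Corollary~\ref{COR310}(2)), contradicting strong Markov at $\sigma_x$. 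The main obstacle is the pointwise simple Markov property at ghost points --- i.e.\ ruling out atoms of the transition kernel of $\widehat X$ at points of zero $\widehat\fm$-mass --- which is the only place where one cannot argue purely by the $L^2$-isomorphism $\iota$.
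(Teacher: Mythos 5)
Your overall strategy---transferring everything from $\widehat X$ to $\dot X$ through the folding map $\br$ and exploiting that the ghost set $\widehat I\setminus\bs(I)$ is countable and $\widehat\fm$-null---is exactly the paper's, and your continuity and Dirichlet-form arguments match the paper's almost step for step (the paper likewise collapses the jumps of $\widehat X$ via the skip-free property and identifies $\dot P_t(x,\Gamma)=\widehat P_t(\bs(x),\bs(\Gamma))$). The one soft spot is the step you yourself flag: you need $\widehat P_t(\widehat x,\widehat I\setminus\bs(I))=0$ for \emph{every} $t>0$ and every $\widehat x$, and you propose to obtain it from ``a Fubini argument using $\widehat\fm$-symmetry''. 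As stated this only yields the conclusion for Lebesgue-a.e.\ $t$: symmetry, or the Revuz/occupation-density formula for the PCAF $\widehat A$, shows that the resolvent $\widehat R_\alpha(\widehat x,\cdot)$ is absolutely continuous with respect to $\widehat\fm$, whence $\widehat P_t(\widehat x,\cdot)$ does not charge the ghost set for a.e.\ $t$ only; upgrading to all $t$ requires the absolute continuity of the transition function itself, e.g.\ via \cite[Theorem~4.2.4]{FOT11}, or, as the paper does, by quoting that $\widehat X$ is equivalent to a quasidiffusion whose transition function admits a density with respect to $\widehat\fm$ (\cite{K75}). The claim is true, but your justification needs this extra ingredient to close.

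Where you genuinely depart from the paper is the failure of the strong Markov property. You argue directly at a discontinuity point $x\in D$: by skip-freeness $\widehat X$ enters the fibre $\br^{-1}(x)$ at $\bs(x-)$ or $\bs(x+)$ according to the side of approach, and the two post-hitting laws of $\dot X$ differ (distinct exit distributions, computable on natural scale), which is essentially Schütze's original observation in \cite{S79} and is a sound, self-contained route. The paper instead argues by contradiction at the level of Dirichlet forms: if $\dot X$ were strong Markov it would be a continuous strong Markov process symmetric with respect to $\fm$, forcing $(\sE,\sF)$ to be a regular Dirichlet form on $L^2(I,\fm)$ by \cite[Theorem~3]{L21}, which contradicts Lemma~\ref{LM81}~(2) since $\bs$ is discontinuous. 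Your route is more elementary and shows concretely \emph{where} the strong Markov property breaks; the paper's is shorter but leans on an external characterization theorem and on the regularity dichotomy of Lemma~\ref{LM81}.
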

\begin{proof}
Firstly we note that for any $A\in \dot \sF\subset \widehat{\sF}$,  $\dot{\mathbf{P}}_x(A)=\widehat{\mathbf{P}}_{\bs(x)}(A)$ is Borel measurable in $x$.  Secondly,  let us prove the continuity of all paths of $\dot X$.  The right continuity of all paths is due to that for $\widehat{X}$.  If $\widehat{X}_{t-}=\widehat{X}_t$,  then obviously $\dot{X}_{t-}=\dot{X}_t$.  If $\widehat{X}_{t-}\neq \widehat{X}_t$,  then the \emph{skip-free property} of $\widehat{X}$ (see,  e.g.,  \cite{K86,  L23b})  implies that $(\widehat X_{t-} \wedge \widehat X_t,  \widehat X_{t-}\vee \widehat X_t)=(\widehat{a}_k,\widehat{b}_k)$ for some interval $(\widehat{a}_k,\widehat{b}_k)$ in \eqref{eq:15}.  Note that $\br(\widehat{a}_k)=\br(\widehat{b}_k)$.  It follows that $\dot X_{t-}=\dot X_t$.  Hence $\dot X$ is a continuous stochastic process.

Next we turn to verify the Markov property of $\dot X$.  Denote by $\widehat{P}_t$ the transition functions of $\widehat{X}$,  i.e.  $\widehat{P}_t(\widehat{x},  \widehat{\Gamma})=\widehat{\mathbf{P}}_{\widehat{x}}(\widehat{X}_t\in \widehat{\Gamma})$ for $\widehat{x}\in \widehat{I}$ and $\widehat{\Gamma}\in \mathcal{B}(\widehat{I})$.  In view of \cite[Theorem~4.1]{L23b},  $\widehat{X}$ is equivalent to a \emph{quasidiffusion}.  Then $\widehat{P}_t$ admits a transition density with respect to $\widehat{\fm}$,  as mentioned in, e.g.,  \cite{K75}.  Particularly,  
\begin{equation}\label{eq:82}
	\widehat{P}_t(\widehat{x},  \widehat{I}\setminus \bs(I))=\widehat{\mathbf{P}}_{\widehat{x}}(\widehat{X}_t\in \widehat{I}\setminus \bs(I))=0. 
\end{equation}
Define $\dot P_t(x,\Gamma):=\dot{\mathbf{P}}_x(\dot{X}_t\in \Gamma)$ for $t\geq 0$, $x\in I$ and $\Gamma\in \mathcal{B}(I)$.  By the definition of $\dot X$,  we have $\dot P_t(x,\Gamma)=\widehat{\mathbf{P}}_{\bs(x)}(\br(\widehat{X}_t)\in \Gamma)=\widehat{\mathbf{P}}_{\bs(x)}(\widehat{X}_t\in \br^{-1}\Gamma)$. 
In view of \eqref{eq:82},  one gets 
\begin{equation}\label{eq:83}
	\dot P_t(x,\Gamma)=\widehat{\mathbf{P}}_{\bs(x)}\left(\widehat{X}_t\in \bs(\Gamma)\right)=\widehat{P}_t\left(\bs(x), \bs(\Gamma) \right).  
\end{equation}
To prove the Markov property of $\dot X$,  it suffices to show that for $\Gamma\in \mathcal{B}(I)$ and $A\in \dot\sF_t\subset \widehat{\sF}_t$,  
\begin{equation}\label{eq:84}
	\dot{\mathbf{P}}_x\left(\dot X_{t+s}\in \Gamma; A \right)=\int_A \dot P_s(\dot X_t,  \Gamma) d\dot{\mathbf{P}}_x.  
\end{equation}
In fact,  on account of the Markov property of $\widehat{X}$,   the left hand side of \eqref{eq:84} equals
\[
	\widehat{\mathbf{P}}_{\bs(x)}\left(\widehat{X}_{t+s}\in \br^{-1}\Gamma; A \right)=\int_{A} \widehat{P}_s(\widehat{X}_t, \br^{-1}\Gamma) d\widehat{\mathbf{P}}_{\bs(x)}.
\]
Using \eqref{eq:82} and \eqref{eq:83},  we get that $\dot{\mathbf{P}}_x(\dot X_{t+s}\in \Gamma; A)$ equals
\[
	\int_{A \cap \{\widehat{X}_t\in \bs(I)\}} \widehat{P}_s(\widehat{X}_t, \bs(\Gamma)) d\widehat{\mathbf{P}}_{\bs(x)}=\int_A \dot P_s(\dot X_t,  \Gamma) d\dot{\mathbf{P}}_x.  
\]
Hence \eqref{eq:84} is concluded. 

Fourthly we derive the symmetry of $\dot{X}$ with respect to $\fm$.  Note that \eqref{eq:83} gives the transition functions of $\dot X$.  Then the symmetry of $\dot X$ can be easily obtained by using the symmetry of $\widehat{X}$ with respect to $\widehat{\fm}$,  \eqref{eq:41} and $\widehat{\fm}=\fm\circ \bs^{-1}$.  In addition,  applying \cite[(1.3.17)]{FOT11} to $\dot P_t$ and noting \eqref{eq:35}, \eqref{eq:38-3},  one can verify that the Dirichlet form of $\dot X$ on $L^2(I,\fm)$ is $(\sE,\sF)$.  

Finally it suffices to show that the strong Markov property fails for $\dot X$.  If this is not true then $\dot X$ is a diffusion process on $I$ which is symmetric with respect to $\fm$.  By virtue of \cite[Theorem~3]{L21},  its Dirichlet form $(\sE,\sF)$ on $L^2(I,\fm)$ must be regular,  as violates the second assertion of Lemma~\ref{LM81} because $\bs$ is not continuous.  That completes the proof. 
\end{proof}

\subsection{Ray-Knight compactification}\label{SEC43-2}

Finally we will show that the regularized Markov process $X^*$ obtained in Theorem~\ref{THM43} is the Ray-Knight compactification of the unregularized one $\dot{X}$.  Basic facts about Ray-Knight compactification are reviewed in Appendix~\ref{APPB}.  

Losing no generality but gaining much simplification,  we assume that $\bs$ is strictly increasing and $l\in I$,  i.e.  $l$ is reflecting.  The assumption $l\in I$ makes us concentrate on the right endpoint,  and general cases can be treated analogously.  Denote by $(\dot{R}_\alpha)_{\alpha>0}$ the Markov resolvent of $\dot{X}$ on $I_\partial:=I\cup \{\partial\}$,  where the trap $\partial$ is an isolated point attaching to $I$ if $r\in I$ and identified with $r$ if $r\notin I$.  

\subsubsection{Feller's boundary classification}

Let us first classify the endpoints $l$ and $r$ for $X^*$ in Feller's sense.  In view of Lemma~\ref{LM32},  we only need to state related terminologies by means of the parameters of $\widehat{X}$.  Put for  $\widehat{x}\in (\widehat{l},\widehat{r})$,  
\[
	{\widehat{\sigma}}(\widehat{x}):=\int_0^{\widehat{x}} {\widehat{\fm}}\left((0, {\widehat{y}}] \right) d{\widehat{y}}, \quad 
	{\widehat{\lambda}}({{\widehat{x}}}):=\int_{(0,  {\widehat{x}}]} {\widehat{y}} {\widehat{\fm}}(d{\widehat{y}}).
\]
For $\widehat{j}=\widehat{l}$ or $\widehat{r}$,  define ${\widehat{\sigma}}(\widehat{j}):=\lim_{{\widehat{x}}\rightarrow \widehat{j}}{\widehat{\sigma}}({\widehat{x}})$ and ${\widehat{\lambda}}(\widehat{j}):=\lim_{{\widehat{x}}\rightarrow \widehat{j}}{\widehat{\lambda}}({\widehat{x}})$. 
The following classification in Feller's sense is very well known.

\begin{definition}
The endpoint $\widehat{r}$ (resp.  $\widehat{l}$) for $\widehat X$ is called
\begin{itemize}
\item[(1)] \emph{regular},  if ${\widehat{\sigma}}({\widehat{r}})<\infty,  {\widehat{\lambda}}({\widehat{r}})<\infty$ (resp.  ${\widehat{\sigma}}({\widehat{l}})<\infty,  {\widehat{\lambda}}({\widehat{l}})<\infty$); 
\item[(2)] \emph{exit},  if ${\widehat{\sigma}}({\widehat{r}})<\infty,  {\widehat{\lambda}}({\widehat{r}})=\infty$ (resp.  ${\widehat{\sigma}}({\widehat{l}})<\infty,  {\widehat{\lambda}}({\widehat{l}})=\infty$); 
\item[(3)] \emph{entrance},  if ${\widehat{\sigma}}({\widehat{r}})=\infty,  {\widehat{\lambda}}({\widehat{r}})<\infty$ (resp.  ${\widehat{\sigma}}({\widehat{l}})=\infty,  {\widehat{\lambda}}({\widehat{l}})<\infty$); 
\item[(4)] \emph{natural},  if ${\widehat{\sigma}}({\widehat{r}})= {\widehat{\lambda}}({\widehat{r}})=\infty$ (resp.  ${\widehat{\sigma}}({\widehat{l}})= {\widehat{\lambda}}({\widehat{l}})=\infty$).  
\end{itemize}
Accordingly the endpoint $r$ (resp.  $l$) is called \emph{regular,  exit,  entrance} or \emph{natural}  for $X^*$ if so is $\widehat{r}$ (resp.  $\widehat{l}$) for $\widehat{X}$.  
\end{definition}
\begin{remark}
Note that $r$ is regular for $X^*$ if and only if $\bs(r)<\infty$ and $\fm((0,{r}))<\infty$.  Hence this regular property is identified with that in Definition~\ref{DEF21}.  If $r$ is exit,  then $\bs(r)<\infty$ and $\fm((0,r))=\infty$.  If $r$ is entrance,  then $\bs(r)=\infty$ and $\fm((0,r))<\infty$.  If $r$ is natural,  then $\bs(r)+\fm((0,r))=\infty$.  The left endpoint $l$ can be argued similarly. 
\end{remark}

The assumption $l\in I$ immediately implies that $l$ is regular in Feller's sense.  The lemma below due to \cite{L23c} is crucial to our investigation.  

\begin{lemma}\label{LM48}
Let $(\widehat{P}_t)_{t\geq 0}$ be the Markov semigroup of $\widehat{X}$.  The following hold:
\begin{itemize}
\item[\rm (1)]  If $r$ is reflecting or entrance,  then $(\widehat{P}_t)_{t\geq 0}$ acts  on $C(\overline{\bs([l,r])})$ as a Feller semigroup.  
\item[\rm (2)] If $r$ is absorbing,  exit or natural,  then $(\widehat{P}_t)_{t\geq 0}$ acts on $C_\infty(\widehat{I})$ as a Feller semigroup.  
\end{itemize}
\end{lemma}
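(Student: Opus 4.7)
The plan is to reduce the lemma to classical Feller theory for one-dimensional generalized diffusions, leveraging the two representations of $\widehat{X}$ already established in the paper: by Theorem~\ref{THM19} it is the time change of Brownian motion on $\widehat{J}$ by the PCAF associated to $\widehat{\fm}$, and by \cite[Theorem~4.1]{L23b} it is equivalent to a quasidiffusion on $\widehat{I}$. I would work primarily with the resolvent $\widehat{R}_\alpha$ and apply Hille--Yosida: it suffices to show that $\widehat{R}_\alpha$ maps the claimed function space (either $C(\overline{\bs([l,r])})$ or $C_\infty(\widehat{I})$) into itself, together with $\|\alpha \widehat{R}_\alpha f\|_\infty \le \|f\|_\infty$ and the strong-continuity $\alpha \widehat{R}_\alpha f \to f$.

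First, I would write $\widehat{R}_\alpha f$ as a Green-function integral built from the two monotone fundamental solutions $\widehat{\phi}_\alpha, \widehat{\psi}_\alpha$ of the generalized eigenvalue equation $-\tfrac{1}{2}\tfrac{d^2 u}{d\widehat{\fm}\, d\widehat{x}} = \alpha u$ on $(\widehat{l},\widehat{r})$, with the reflecting boundary condition at $\widehat{l}$ (which holds automatically because $l\in I$). Standard Krein-string estimates then give the precise dichotomy: the asymptotic behavior of $\widehat{\phi}_\alpha$ near $\widehat{r}$ is controlled by $\widehat{\sigma}(\widehat{r})$ and $\widehat{\lambda}(\widehat{r})$. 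In the reflecting or entrance case the solution stays bounded and nonzero at $\widehat{r}$, so $\widehat{R}_\alpha f$ admits a finite limit there; in the exit, absorbing or natural case $\widehat{R}_\alpha f$ decays to $0$ as $\widehat{x}\to\widehat{r}$. This is the mechanism that separates the two conclusions.

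Second, I would verify continuity of $\widehat{R}_\alpha f$ in the ambient topology. Interior continuity on $\widehat{I}$ is routine from the Green kernel. The nontrivial check, which is where the topological difference between $\widehat{I}$ and $\overline{\bs([l,r])}$ becomes visible, is at points isolated in $\widehat{I}$ but not in $\overline{\bs([l,r])}$: namely the images $\bs(x)$ for $x\in D^0$ and the collapsed components coming from isolated intervals $(c_n,d_n)$ in \eqref{eq:14}. At such points one must match the one-sided limits of $\widehat{R}_\alpha f$ through $\widehat{I}$ with the values assigned at the adjacent points $\bs(x\pm)\in\overline{\bs([l,r])}\setminus\widehat{I}$; this is a consistency verification from the explicit Green kernel, using $\widehat{\fm}=\fm\circ\bs^{-1}$.

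The main obstacle will be precisely this matching at the added ``boundary'' points and, in the entrance case where $\widehat{r}=+\infty$ may lie outside $\widehat{I}$, a careful compactification to ensure $\widehat{R}_\alpha f$ extends continuously to $\overline{\bs([l,r])}$ rather than only vanishing at infinity. Once these boundary-continuity points are settled, Hille--Yosida immediately promotes the resolvent statement to the Feller property of $(\widehat{P}_t)$. The full verification of these technicalities, along with the uniform convergence $\alpha \widehat{R}_\alpha f\to f$ on the relevant spaces, is carried out in \cite{L23c}, to which we refer for details.
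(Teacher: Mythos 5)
The paper does not actually prove Lemma~\ref{LM48}: it is imported wholesale from \cite{L23c} (``The lemma below due to \cite{L23c} is crucial to our investigation''), so your proposal, which ends by deferring the technical verification to the same reference, is in the end doing what the paper does. Your outline of how that verification goes --- write $\widehat{R}_\alpha f$ via the Green kernel built from the two monotone solutions of $-\tfrac{1}{2}\tfrac{d^2u}{d\widehat{\fm}\,d\widehat{x}}=\alpha u$, read off the boundary behaviour at $\widehat{r}$ from $\widehat{\sigma}(\widehat{r})$ and $\widehat{\lambda}(\widehat{r})$, and then apply Hille--Yosida --- is the standard Krein-string route for quasidiffusions and is the right mechanism separating the two cases, including the observation that the entrance case forces $\widehat{r}=\infty$ and requires a one-point compactification.

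One correction to where you locate the difficulty: you say the nontrivial check is at the points $\bs(x)$ for $x\in D^0$ and at collapsed isolated intervals, where one must ``match the one-sided limits of $\widehat{R}_\alpha f$ through $\widehat{I}$ with the values assigned at the adjacent points $\bs(x\pm)\in\overline{\bs([l,r])}\setminus\widehat{I}$''. This is not right: by \eqref{eq:34} and \eqref{eq:27-3} the points $\bs(x\pm)$ already belong to $\widehat{I}$, and $\overline{\bs([l,r])}\setminus\widehat{I}$ can contain at most the endpoints $\widehat{l},\widehat{r}$; since $l\in I$ is assumed in this section, the only point possibly adjoined in passing from $\widehat{I}$ to $\overline{\bs([l,r])}$ is $\widehat{r}$. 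Moreover, continuity of $\widehat{R}_\alpha f$ at a point isolated in $\widehat{I}$ is vacuous, so there is no interior matching problem at all --- the entire content of the lemma sits at the endpoint $\widehat{r}$, exactly the part you correctly flag in your last paragraph (finite limit of $\widehat{R}_\alpha f$ at $\widehat{r}$, respectively decay to $0$, plus uniform convergence $\alpha\widehat{R}_\alpha f\to f$ up to the boundary). With that spurious interior concern removed, your sketch is the expected proof.
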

\begin{remark}
Note that $\overline{\bs([l,r])}$ is compact,  and $\overline{\bs([l,r])}=\widehat{I}\cup \{\widehat{r}\}$.  The trap for $\widehat{X}$ is an isolated point attaching to $\overline{\bs([l,r])}$ in the first assertion,  while is identified with $\widehat{r}$ in the second assertion.  
\end{remark}

\subsubsection{Reflecting and entrance cases}

We first treat the case that $r$ is reflecting or entrance.  Denote by $(R^*_\alpha)_{\alpha>0}$ the Markov resolvent of $X^*$ on $\bF:=I^*\cup \{r\}\cup \{\partial\}$,  where the trap $\partial$ is an isolated point attaching to $\bar{I}^*:=I^*\cup\{r\}$.  In view of Lemma~\ref{LM48},  $R^*_\alpha: C(\bar{I}^*)\rightarrow C(\bar{I}^*)$ is the resolvent of a Feller semigroup,  which is associated to a Feller process on $\bar{I}^*$.  When $r$ is entrance, i.e.  $r\in I^*$, this Feller process restricted to $I^*$ is identified with $X^*$,  while $X^*$ does not contain any information at $r$.  In abuse of notation we still denote by $X^*$ the Feller process on $\bF$ associated to $(R^*_\alpha)_{\alpha>0}$.  

\begin{theorem}\label{THM410}
Assume that $r$ is reflecting or entrance.  Then the Feller process $X^*$ on $\bF$ is the Ray-Knight compactification of $\dot X$.
\end{theorem}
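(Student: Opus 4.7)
The plan is to verify that $X^*$, viewed as a Feller process on the compact state space $\bar{I}^*$, satisfies the defining axioms of the Ray--Knight compactification of $\dot{X}$ reviewed in Appendix~\ref{APPB}.

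First I would set up the embedding $I\hookrightarrow \bar{I}^*$ coming from the natural identification inside $\bar{I}^{\rho,*}$: since $\bs$ is strictly increasing, only scale completion is needed, and each phantom point $x\pm$ is a $\rho$-limit of some sequence in $I\setminus D$, so $I$ is dense in $\bar{I}^*$. By Lemma~\ref{LM48}(1), in the reflecting/entrance case the semigroup $(\widehat{P}_t)$ acts as a Feller semigroup on $C(\overline{\bs([l,r])})$; via the homeomorphism $\bs^*$ of Lemma~\ref{LM32} this transfers to say that $(R^*_\alpha)$ is a Feller, hence Ray, resolvent on $\bar{I}^*$ (with the isolated trap $\partial$ attached as usual).

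Next I would establish resolvent compatibility. For $f\in C_b(I)$ define $f^*(x^*):=f(\br(\bs^*(x^*)))$ on $\bar{I}^*$. A short case analysis shows that $\br:\widehat{I}\to I$ is continuous (for $x\in D^0$, $\bs(x)$ is isolated in $\widehat{I}$; for $x\in D\setminus D^0$, one-sided approaches to $\bs(x\pm)$ in $\widehat{I}$ come from one-sided approaches to $x$ in $I$), so $f^*\in C(\bar{I}^*)$. Using $X^*_t=\br^*(\widehat{X}_t)$, $\dot{X}_t=\br(\widehat{X}_t)$ on $\dot{\Omega}$, and $\bs=\bs^*|_I$, together with the identity $f^*\circ\br^*=f\circ\br$ on $\widehat{I}$, one obtains for $x\in I$
\begin{equation*}
R^*_\alpha f^*(x)=\widehat{R}_\alpha(f^*\circ\br^*)(\bs(x))=\widehat{R}_\alpha(f\circ\br)(\bs(x))=\dot{R}_\alpha f(x).
\end{equation*}
This identifies $(R^*_\alpha,\bar{I}^*)$ as a Ray-resolvent extension of $(\dot{R}_\alpha,I)$.

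Finally I would invoke the uniqueness portion of the Ray--Knight theorem from Appendix~\ref{APPB}: the Ray--Knight compactification is the essentially unique compact space containing $I$ densely together with a Ray resolvent extending $\dot{R}_\alpha$ whose associated Ray cone separates the points. Separation between two distinct points of $I$ is routine. The delicate case is distinguishing $x\in D$ from its phantoms $x\pm$, and two phantoms of the same $x$ from each other; since by construction $f^*(x)=f^*(x\pm)=f(x)$, the extensions themselves cannot do this, and the separation must come through $R^*_\alpha$. For this I would use Corollary~\ref{COR310}(2): $\bs(x)$ and $\bs(x\pm)$ are genuinely distinct points of $\widehat{I}$, and the pointwise irreducibility of the quasidiffusion $\widehat{X}$, combined with its skip-free structure, yields for some $f\in C_c(I)$ and $\alpha>0$ a strict inequality $\widehat{R}_\alpha(f\circ\br)(\bs(x))\neq \widehat{R}_\alpha(f\circ\br)(\bs(x\pm))$.

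The main obstacle is precisely this last separation step. The canonical extensions $f^*$ are constant on the $(\br\circ\bs^*)$-fibres $\{x,x-,x+\}$, so their pointwise values cannot separate elements of a fibre; one has to show that the restricted subspace $\{f\circ\br:f\in C_c(I)\}\subset C_c(\widehat{I})$ still generates, through $\widehat{R}_\alpha$, enough Ray-cone functions on $\widehat{I}$ to distinguish $\bs(x)$ from $\bs(x\pm)$. This technical point leans on the concrete quasidiffusion realization of $\widehat{X}$ from Section~\ref{SEC33} and is where most of the work of the proof is expected to concentrate.
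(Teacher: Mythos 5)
Your skeleton is the right one and matches the paper's: both arguments reduce the theorem to verifying the hypotheses of the uniqueness Lemma~\ref{LMB9} for the pair $(\bF, R^*_\alpha)$, using Lemma~\ref{LM48}~(1) for the Feller/Ray property and the identity \eqref{eq:83} (equivalently $\dot R_\alpha f = R^*_\alpha f^*|_{E_\partial}$) for resolvent compatibility. The problem is the last step, which you correctly identify as the crux and then do not carry out. You generate your Ray cone from functions of the form $f\circ\br\circ\bs^*$ with $f\in C_c(I)$, which are constant on each fibre $\{x,x-,x+\}$, and you are then forced to extract the separation of $\bs(x)$ from $\bs(x\pm)$ out of the resolvent $\widehat R_\alpha$ applied to fibre-constant data. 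Corollary~\ref{COR310}~(2) alone does not give the strict inequality $\widehat R_\alpha(f\circ\br)(\bs(x))\neq \widehat R_\alpha(f\circ\br)(\bs(x\pm))$; one would need a genuine quantitative argument with the quasidiffusion (hitting times across the gap $(\widehat a_k,\widehat b_k)$, or resolvent densities), and also a precise statement of which Ray cone is being used, since by \S B.4 the Ray--Knight compactification is \emph{not} unique independently of the choice of $G$ — Lemma~\ref{LMB9} is a uniqueness statement relative to a fixed $S(G)$. As written, the proposal asserts the key separation inequality without proof, so there is a genuine gap.

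The paper closes this gap by a different, and much cheaper, choice of generating class: it takes $G:=\{\dot R_\alpha f:\alpha>0,\ f=f^*|_{E_\partial},\ f^*\in C(\bF)\}$, i.e.\ it feeds the resolvent with restrictions to $I_\partial$ of \emph{arbitrary} continuous functions on the target compactification $\bF$, rather than with continuous functions on $I$ pulled back through $\br$. With this choice every element of $G$ extends continuously to $\bF$ — the extension is exactly $R^*_\alpha f^*$ — so the family of extensions contains $\{R^*_\alpha f^*:\alpha>0,\ f^*\in C(\bF)\}$, and this separates points of $\bF$ immediately because $(R^*_\alpha)$ is a Feller resolvent on the compact space ($\alpha R^*_\alpha f^*\to f^*$ uniformly and $C(\bF)$ separates points). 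No analysis of the quasidiffusion near the gaps is needed. If you want to salvage your route, you must either prove the separation inequality for fibre-constant test functions (this is real work and is exactly what the paper's choice of $G$ is designed to avoid), or enlarge your test class as the paper does.
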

\begin{proof}
Note that the first condition (i) in Lemma~\ref{LMB9} holds obviously for $E_\partial:=I_\partial$ and $\bF':=\bF$,  and in view of \eqref{eq:83},  we also get that (iv) in Lemma~\ref{LMB9} holds for $R_\alpha:=\dot R_\alpha$ and $\bar{R}'_\alpha:=R^*_\alpha$.  Take
\[
	G:=\{\dot{R}_\alpha f: \alpha>0,  f=f^*|_{E_\partial},  f^*\in C(\bF')\}.  
\]
Since $C(\bF')$ is separable and $\dot{R}_\alpha f=R^*_\alpha f^*|_{E_\partial}$,  one may easily check that $G$ satisfies the conditions listed before Lemma~\ref{LMB6}.  Hence Lemma~\ref{LMB6} leads to a Ray cone $S(G)$.  It suffices to prove that (i) and (ii) in Lemma~\ref{LMB9} hold for $S(G)$.  In fact,  since $\dot R_\alpha f$ extends to $R^*_\alpha f^*\in C(\bF')$ for every $f^*\in C(\bF')$,  it follows that $\{R^*_\alpha f^*: \alpha>0,  f^*\in C(\bF')\}\subset \overline{S(G)'}$.  Consequently $\overline{S(G)'}$ separates points of $\bF'$,  i.e.  (iii) in Lemma~\ref{LMB9} holds true.  On the other hand,  in view of the construction of $S(G)$ from $G$ (see,  e.g.,  \cite[Proposition~10.1]{Ge75}),  it is easy to verify that (ii) in Lemma~\ref{LMB9} also holds true.  That completes the proof. 
\end{proof}

\subsubsection{Absorbing,  exit and natural cases}

Regarding the case that $r$ is absorbing,  exit or natural,  we put $\bF:=I^*\cup \{\partial\}$ instead,  where the trap $\partial$ is identified with $r$ (in $\bar{I}^*$).  On account of Lemma~\ref{LM48},  $R^*_\alpha: C(\bF)\rightarrow C(\bF)$ is the Markov resolvent associated to the Feller process $X^*$ on $\bF$.  Mimicking the proof of Theorem~\ref{THM410},  we also have the following.

\begin{theorem}
Assume that $r$ is absorbing, exit or natural.  Then the Feller process $X^*$ on $\bF$ is the Ray-Knight compactification of $\dot X$.
\end{theorem}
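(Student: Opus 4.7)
The plan is to mimic the proof of Theorem~\ref{THM410} essentially verbatim. The only structural change is that now $\bF = I^*\cup\{\partial\}$ with $\partial$ identified with $r \in \bar{I}^*$, rather than $\partial$ being attached as a separate isolated point; correspondingly, by Lemma~\ref{LM48}(2), the Feller resolvent $R^*_\alpha$ acts on $C(\bF)$, where $\bF$ is compact under the one-point-compactification topology (with $r$ playing the role of infinity when $\bs(r) = \infty$). The strategy is to verify the hypotheses of Lemma~\ref{LMB9} and then invoke the Ray cone construction of Lemma~\ref{LMB6} to identify $X^*$ with the Ray-Knight compactification of $\dot X$.

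First I would check conditions (i) and (iv) of Lemma~\ref{LMB9} with $E_\partial := I_\partial$ and $\bF' := \bF$: condition (i) is immediate from the continuous inclusion $I \hookrightarrow I^*$ together with the identification of traps, while condition (iv), i.e.\ $\dot R_\alpha f = R^*_\alpha f^*|_{E_\partial}$ whenever $f = f^*|_{E_\partial}$, follows directly from \eqref{eq:83}. I would then set
\[
	G := \{\dot R_\alpha f : \alpha > 0,\ f = f^*|_{E_\partial},\ f^* \in C(\bF)\},
\]
note that $C(\bF)$ is separable (as $\bF$ is a compact metric space), and invoke Lemma~\ref{LMB6} to produce a Ray cone $S(G)$. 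The remaining conditions (ii) and (iii) of Lemma~\ref{LMB9} are then verified exactly as in Theorem~\ref{THM410}: the containment $\{R^*_\alpha f^* : \alpha > 0,\ f^* \in C(\bF)\} \subset \overline{S(G)'}$ forces $\overline{S(G)'}$ to separate points of $\bF$, giving (iii), while (ii) is built into the construction of $S(G)$ from $G$ (cf.\ Proposition~10.1 of \cite{Ge75}).

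The one point requiring mild care is separation of $\partial = r$ from the rest of $\bF$ in the exit and natural cases, where $r$ is either an absorbing boundary never visited by $\dot X$ or the point at infinity of $I^*$. This is harmless: by definition of the topology on $\bF$ every $f^*\in C(\bF)$ is continuous at $r$, and one can always choose $f^*$ with $f^*(r) = 0$ but $f^*\not\equiv 0$ on $I^*$ to separate $r$ from any prescribed $x\in I^*$. Hence no genuinely new obstacle arises beyond what was already handled in Theorem~\ref{THM410}, and the conclusion follows.
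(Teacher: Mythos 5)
Your proposal is correct and follows essentially the same route as the paper, which itself simply says the result is obtained by mimicking the proof of Theorem~\ref{THM410} with $\bF=I^*\cup\{\partial\}$ and $\partial$ identified with $r$, using Lemma~\ref{LM48}~(2) in place of Lemma~\ref{LM48}~(1). Your extra remark on separating the point $\partial=r$ is a harmless elaboration of a step the paper leaves implicit.
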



\section{Examples}\label{SEC8}

In this section we present various classical Markov processes that can be treated as regularized or unregularized Markov processes associated to certain triple $(I,\bs,\fm)$.

\subsection{Snapping out diffusions}

Consider $l=-\infty,  r=\infty$, $\fm$ is the Lebesgue measure and
\[
\bs(x)=x,   \;x<0,\quad \bs(x)=x+2/\kappa,\; x\geq 0,
\]
where $\kappa>0$ is a given constant.  Clearly (DK) and \text{(DM)} hold true.  (Although $\bs$ is not continuous at $0$ as assumed in \S\ref{SEC2},  we may take another point in place of $0$.)

Denote by $H^1(J)$ the Sobolev space of order $1$ over an interval $J$.  Then it is easy to verify that $I=\bR$,  $I^*=(-\infty, 0-]\cup [0+,\infty)$ and 
\[
\begin{aligned}
	&\sF^*=\sF=\left\{f\in L^2(\bR): f|_{(0,\infty)} \in H^1((0,\infty)),  f|_{(-\infty, 0)}\in H^1((-\infty, 0))\right\},   \\
	&\sE^*(f,f)=\sE(f,f)=\frac{1}{2}\int_{\bR\setminus \{0\}}f'(x)^2dx+\frac{\kappa}{4}(f(0+)-f(0-))^2,\quad f\in \sF.  
\end{aligned}\]
Note that $(\sE,\sF)$ is a regular Dirichlet form on $L^2((-\infty, 0-]\cup [0+,\infty))$ and its associated Hunt process $X^*$, called \emph{snapping out Brownian motion} with parameter $\kappa$ (see \cite{L16} as well as \cite[\S3.4]{LS20} for its extension called \emph{snapping out diffusion process}),  is a regularized Markov  process associated to $(I,\bs,\fm)$.  

However $(\sE,\sF)$ is a Dirichlet form but not regular on $L^2(\bR)$, and Theorem~\ref{THM82} yields a continuous Markov process $\dot X$ whose Dirichlet form on $L^2(\bR)$ is $(\sE,\sF)$.  Before hitting $0$,  $\dot X$ moves as a Brownian motion,  and the excursions of $\dot X$ at $0$ can be described as follows: There exists a sequence of i.i.d.  random times $\{\tau_n: n\geq 1\}$ such that 
\[
S_n=\tau_0+\cdots+\tau_n\in Z:=\{t: \dot{X}_t=0\}, \quad \forall n\geq 0,
\]
where $\tau_0=\inf\{t>0: \dot X_t=0\}$,  and for $k\in \bN$,  
\begin{itemize}
\item[(1)] When $\dot X_0>0$,  the excursion intervals contained in $(S_{2k}, S_{2k+1})$ are on the right axis while those contained in $(S_{2k+1}, S_{2k+2})$ are on the left axis.
\item[(2)] When $\dot X_0<0$,  the excursion intervals contained in $(S_{2k}, S_{2k+1})$ are on the left axis while those contained in $(S_{2k+1}, S_{2k+2})$ are on the right axis.
\item[(3)] When $\dot X_0=0$,  the above two cases occur with equal probability.  
\end{itemize}
Particularly,  the strong Markov property at $\tau_0$ fails for $\dot X$.  
It is worth pointing out that the sequence of times $\{S_n: n\geq 1\}$ consists of the successive jumping times of snapping out Brownian motion,  and $\tau_n\overset{d}{=} \inf\{t>0: L_t>\xi\}$ is equal to the killing time of an \emph{elastic Brownian motion} in distribution,  where $L_t$ is the local time of a certain reflecting Brownian motion on $[0,\infty)$ at $0$ and $\xi$ is an independent exponential random variable with parameter $\kappa$.  

\subsection{Random walk in one dimension}\label{SEC43}

Take $p,q\in \bN\cup \{\infty\}$ and a sequence of constants indexed by $\bZ_{-p,q}:=\{-p,1-p,\cdots,  -1, 0, 1,\cdots,  q\}\cap \mathbb{Z}$: $$c_{-p}<\cdots <c_{-1}<c_0<c_1<\cdots<c_q.$$  
Consider $l=-p,  r=q+1$ and
\begin{itemize}
\item[(a)] $\bs(x)=c_n$ for $x\in [n, n+1)$ and $n\in \mathbb{Z}_{-p,q}$.  
\item[(b)] $\fm$ is fully supported, Radon on $I\cap \bR$ and $\fm(\{-\infty\})=\infty$ or $\fm(\{\infty\})=\infty$ whenever $p=\infty$ or $q=\infty$.  

\end{itemize}
Certainly (DK) and \text{(DM)} hold true.  

The triple $(I^*, \bs^*,\fm^*)$ obtained in \S\ref{SEC41} is as follows: $I^*$ is identified with the discrete space $\bZ_{-p,q}$,  and
\begin{itemize}
\item[(a$^*$)] $\bs^*(n)=c_n$ for $n\in I^*$;
\item[(b$^*$)] $\fm^*(\{n\})=\fm([n, n+1))$ for $n\in \bZ$ with $-p\leq n\leq q-1$,  and $\fm^*(\{q\})=\fm([q, q+1])$ whenever $q\in \bN$.  
\end{itemize}
Put
\[
\mu_{n,n+1}:=\frac{1}{2(c_{n+1}-c_n)},\; n\in \bZ\cap [-p, q-1],\quad \mu_{-p-1, -p}=\mu_{q,q+1}=0
\]
and 
\[
\mu_n:=\mu_{n-1,n}+\mu_{n,n+1},\quad n\in \bZ_{-p,q}.
\]
Then the Dirichlet form $(\sE^*,\sF^*)$ on $L^2(I^*,\fm^*)$ obtained in Theorem~\ref{THM43} is 
\begin{equation}\label{eq:51}
\begin{aligned}
&\sF^*=\{f^*\in L^2(I^*,\fm^*): \sE^*(f^*,f^*)<\infty, \\
&\qquad\qquad f^*(j^*)=0\text{ if } |j^*|=\infty, \lim_{n\rightarrow j^*}|c_n|<\infty\text{ for  }j^*=-p\text{ or }q \},\\
&\sE^*(f^*,f^*)=\sum_{-p\leq n\leq q-1} \mu_{n,n+1}\cdot (f^*(n+1)-f^*(n))^2,\quad f^*\in \sF^*. 
\end{aligned}\end{equation}
The regularized Markov process $X^*$ is a continuous time random walk on $I^*$.  More precisely,  for each $n\in I^*$,  $X^*$ waits at $n$ for a (mutually independent) exponential time with mean $\fm^*(\{n\})/\mu_n$ and then jumps to $n-1$ or $n+1$ according to the distribution
\[
	P_n(n-1)=\frac{\mu_{n-1,n}}{\mu_n},\quad P_n(n+1)=\frac{\mu_{n,n+1}}{\mu_n},\quad P_n(j)=0,\; j\neq n-1,  n+1. 
\]
When $\fm^*(\{n\})=\mu_n$,  $X^*$ is called \emph{in the constant speed}.  That means the holding exponential times are independent and identically distributed; see,  e.g.,  \cite[\S2.1]{K14}.  

Particularly,  when $p=q=\infty$, $c_n=n$ for $n\in \bZ$,  and $\fm|_\bR$ is the Lebesgue measure,  $X^*$ is identified with the \emph{continuous time simple random walk} on $\bZ$.  In addition,  the well-known \emph{birth and death processes} are also special examples of \eqref{eq:51}; see,  e.g.,  \cite{L22b}.

\subsection{Time-changed Brownian motions related to Fukushima subspaces}\label{SEC45}

Let $K\subset [0,1]$ be a generalized Cantor set (see, e.g.,  \cite[page 39]{F99}) of positive Lebesgue measure,  and write $[0,1]\setminus K$ as a disjoint union of open intervals:
\begin{equation}\label{eq:42}
	[0,1]\setminus K=\cup_{k\geq 1}(c_n,d_n).  
\end{equation}
Consider $l=0,r=1$ and
\begin{itemize}
\item[(a)]  $\bs(x)=x$ for $x\in K$ and $\bs(x):=c_n$ for $x\in (c_n,d_n)$ and $n\geq 1$;
\item[(b)] $\fm(dx)=1_K(x)dx$. 
\end{itemize}
It is easy to verify that (DK) and \text{(DM)} hold true. 

The transformation of scale completion in \S\ref{SEC41} divides each $d_n$ into $\{d_n-, d_n+\}$ and then the transformation of darning  collapses $[c_n, d_n-]$ into an abstract point $p^*_n$.  By regarding $p^*_n$ and $d_n+$ as $c_n$ and $d_n$ respectively,  one may identify $(I^*,\bs^*,\fm^*)$ with $(\widehat{I}, \widehat{\bs},  \widehat{\fm})$.  More precisely,
\[
	I^*=\widehat{I}=K,\quad \bs^*(x)=\widehat{\bs}(x)=x,\; x\in K,\quad \fm^*=\widehat{\fm}=\fm.  
\]
Let 
\begin{equation}\label{eq:43}
\sS^*:=\left\{f^*=h_*|_{K}: h_*\text{ is absolutely continuous on }(0,1)\text{ and } h'_{*} \in L^2((0,1))\right\}. 
\end{equation}
The Dirichlet form $(\sE^*,\sF^*)$ obtained in Theorem~\ref{THM43} is expressed as
\[
\begin{aligned}
&\sF^*=L^2(I^*,\fm^*)\cap \sS^*,\\
&\sE^*(f^*,f^*)=\frac{1}{2}\int_{K} \left(\frac{df^*}{dx}\right)^2dx+\frac{1}{2}\sum_{k\geq 1}\frac{\left(f^*(c_n)-f^*(d_n) \right)^2}{|d_n-c_n|},\quad f^*\in \sF^*.  
\end{aligned}
\]
Its associated Hunt process $X^*$ is a time-changed Brownian motion on $K$ with speed measure $\fm^*$.  This process has been utilized in \cite{LY17} to study the \emph{Fukushima subspaces} of 1-dimensional Brownian motion.  


\subsection{Brownian motion on Cantor set}\label{SEC46}

Let $K\subset [0,1]$ be a generalized Cantor set as in \S\ref{SEC45} but we do not impose that it is of positive Lebesgue measure.  Set $K_m:=m+K=\{m+x: x\in K\}$ for $m\in \bZ$ and 
\[
\mathbf{K}:=\cup_{m\in \bZ}K_m.
\]
Write $[0,1]\setminus K$ as \eqref{eq:42} and let $c^m_n:=m+c_n,  d^m_n:=m+d_n$ for $m\in \bZ$.  Consider $l=-\infty,  r=\infty$ and
\begin{itemize}
\item[(a)] $\bs(x):=x$ for $x\in \mathbf{K}$ and $\bs(x):=c^m_n$ for $x\in (c^m_n,d^m_n)$ and $n\geq 1, m\in \bZ$; 
\item[(b)] $\fm(dx)=1_{\mathbf{K}}(x)dx+\sum_{n\geq 1,  m\in \bZ} |d^m_n-c^m_n|\cdot \left(\delta_{c^m_n}+\delta_{d^m_n}\right)/2$,  where $\delta_{c^m_n}, \delta_{d^m_n}$ are Dirac measures at $c^m_n,d^m_n$.  
\end{itemize}
Clearly (DK) and \text{(DM)} hold true.  

Mimicking \S\ref{SEC45},  one yields 
\[
	I^*=\mathbf{K}, \quad \bs^*(x)=x,\; x\in \mathbf{K},\quad \fm^*=\fm.  
\]
Let $\sS^*$ be defined as \eqref{eq:43} with $\mathbf{K}$ and $\bR$ in place of $K$ and $(0,1)$ respectively.  Then the Dirichlet form $(\sE^*,\sF^*)$ on $L^2(I^*,\fm^*)$ is
\[
\begin{aligned}
&\sF^*=L^2(I^*,\fm^*)\cap \sS^*,\\
&\sE^*(f^*,f^*)=\frac{1}{2}\int_{\mathbf K} \left(\frac{df^*}{dx}\right)^2dx+\frac{1}{2}\sum_{n\geq 1,  m\in \bZ}\frac{\left(f^*(c^m_n)-f^*(d^m_n) \right)^2}{|d^m_n-c^m_n|},\quad f^*\in \sF^*.  
\end{aligned}
\]
Particularly,  when $K$ is of zero Lebesgue measure,  e.g.,  the standard Cantor set,  the strongly local part in the expression of $\sE^*(f^*,f^*)$ vanishes.  
The regularized Markov process $X^*$  is called a \emph{Brownian motion on Cantor set} in,  e.g.,  \cite{BEPP08}.  More precisely,  it is claimed in \cite{BEPP08} that $X^*$ is identified with the unique (in distribution) skip-free c\`adl\`ag process $\xi=(\xi_t)_{t\geq 0}$ (not necessarily a Markov process) on $\mathbf K$ such that both $\xi$ and $(\xi^2_t-t)_{t\geq 0}$ are martingales.  

\appendix


\section{Proof of Lemma~\ref{LM39}}\label{APPA}

\begin{proof}[Proof of Lemma~\ref{LM39}]
Recall that $\bs(0)=0$.  
Set 
\[
\begin{aligned}
	\widehat{F}(\widehat{x})&:=|(0,\widehat x)\cap \widehat{I}|,\quad 0\leq \widehat{x}\leq  \widehat{r},\\
	\widehat{F}(\widehat{x})&:=-|(\widehat x,0)\cap \widehat{I}|,\quad \widehat{l}\leq \widehat{x}< 0,
\end{aligned}\]
where $|\cdot|$ stands for the Lebesgue measure.  Clearly $\widehat{F}$ is continuous and increasing,  $\widehat{F}(\bs(x))=\bs_c(x)$ for $x\in (l,r)$.  Set $\widehat{F}^{-1}(\widehat{t}):=\widehat{F}^{-1}(\{\widehat{t}\})=\{\widehat{x}: \widehat{F}(\widehat{x})=\widehat{t}\}$ for $\widehat{t}\in [\widehat{F}(\widehat{l}),\widehat{F}(\widehat{r})]$.  Note that except for at most countably many $\widehat{t}$,  $\widehat{F}^{-1}(\widehat{t})$ is a singleton.  Denote by this exceptional set by $\{\widehat{t}_p: p\geq 1\}$.  Particularly,  $\widehat{F}^{-1}(\widehat{t}_p)$ is an interval and 
\[
\widehat{H}:=\left(\cup_{p\geq 1} \widehat{F}^{-1}(\widehat{t}_p)\right)\cap \widehat{I}
\]
is of zero Lebesgue measure.  Set further 
\[
\widehat{G}(\widehat s):=\inf\{\widehat{x}\in [\widehat{l},\widehat{r}]:  \widehat{F}(\widehat{x})>\widehat s\}
\]
with the convention $\inf \emptyset=\infty$.  For $\widehat{t}\in  [\widehat{F}(\widehat{l}),\widehat{F}(\widehat{r}))\setminus \{\widehat{t}_p: p\geq 1\}$,  $\widehat{G}(\widehat{t})$ is the unique element in $\widehat{F}^{-1}(\widehat{t})$,  and in a little abuse of notation, we write $\widehat{G}(\widehat{t})=\widehat{F}^{-1}(\widehat{t})$.  Let
\[
	H:=\left\{x\in (l,r): \bs(x)\neq \widehat{G}(\bs_c(x))\right\}.  
\]
Then
\begin{equation}\label{eq:A1}
\bs(H):=\{\bs(x): x\in H\}=\{\widehat{x}\in \widehat{I}: \widehat{x}\neq \widehat{G}(\widehat{F}(\widehat{x}))\}=\widehat{H},
\end{equation} 
where the second and third identities hold up to a set of at most countably many points.  More precisely,  both $\bs(H)\setminus \{\widehat{x}\in \widehat{I}: \widehat{x}\neq \widehat{G}(\widehat{F}(\widehat{x}))\}$ and $\{\widehat{x}\in \widehat{I}: \widehat{x}\neq \widehat{G}(\widehat{F}(\widehat{x}))\}\setminus \bs(H)$ are subsets of $\{\widehat{l},\widehat{r}\}$,  $\{\widehat{x}\in \widehat{I}: \widehat{x}\neq \widehat{G}(\widehat{F}(\widehat{x}))\}\subset \widehat{H}$ and $\widehat{H}\setminus \{\widehat{x}\in \widehat{I}: \widehat{x}\neq \widehat{G}(\widehat{F}(\widehat{x}))\}$ contains at most countably many points.  We further assert
\begin{equation}\label{eq:A2}
\mu_c(H)=0.  
\end{equation}
To accomplish this,  note that for any $x\in H$,  $\bs_c(x)=\widehat{F}(\bs(x))\in \{\widehat{t}_p: p\geq 1\}$ and hence $|\bs_c(H)|=0$.  It follows that $\mu_c(H)\leq \mu_c\circ \bs_c^{-1}(\bs_c(H))=|\bs_c(H)|=0$,  where the image measure $\mu_c\circ \bs_c^{-1}$ of $\mu_c$ under $\bs_c$ is actually the Lebesgue measure;  see,  e.g.,  \cite[\S3.5, Exercise 36]{F99}.  


Let $\widehat{f}=\widehat{h}|_{\widehat{I}}$ with $\widehat{h}\in \dot{H}^1_e\left((\widehat{l},\widehat{r})\right)$.  We are to prove that $f(\cdot):=\widehat{f}(\bs(\cdot))=\widehat{h}(\bs(\cdot))\in \sS$,  so that $\widehat{f}\in \widehat{\sS}$.  To do this, set
\[
\begin{aligned}
	&g^c(x):=\widehat{h}'(\widehat G(\bs_c(x))), \quad x\in (l,r);  \\
	&g^\pm(x):=\left\lbrace
	\begin{aligned}
	&\frac{\widehat{h}(\bs(x))-\widehat{h}(\bs(x\pm))}{\bs(x)-\bs(x\pm)},\quad x\in D^\pm, \\
	&0,\qquad\qquad\qquad\qquad\quad x\notin D^\pm. 
	\end{aligned}
	\right.
\end{aligned}\]
Since $\widehat{h}'\in L^2((\widehat{l},\widehat{r}))$ and $|\widehat{h}(\bs(x))-\widehat{h}(\bs(x\pm))|^2\leq \left|\int_{\bs(x)}^{\bs(x\pm)}\widehat{h}'(\widehat{t})^2d\widehat{t}\right| \cdot |\bs(x)-\bs(x\pm)|$,  it follows that $g^\pm\in L^2(I, \mu^\pm_d)$.  We assert that $g^c\in L^2(I, \mu_c)$.  In fact,  $\bs_c(l)=\widehat{F}(\widehat{l})$,  $\bs_c(r)=\widehat{F}(\widehat{r})$,  and using \cite[Chapter 0, Proposition~4.10]{RY99} with $A(\widehat t)=\widehat t$ and $u=\bs_c$ or $\widehat{F}$,  we get that $\int_l^r g^c(x)^2\mu_c(dx)$ is equal to 
\begin{equation}\label{eq:36}
	\int_{\bs_c(l)}^{\bs_c(r)}\widehat{h}'(\widehat{G}(\widehat t))^2d\widehat t=\int_{\widehat{l}}^{\widehat{r}} \widehat{h}' (\widehat{G}(\widehat{F}(\widehat{x})))^2d\widehat{F}(\widehat{x}) =\int_{\widehat{I}}  \widehat{h}' (\widehat{G}(\widehat{F}(\widehat{x})))^2d\widehat{x}.
\end{equation}
Since $\widehat{G}(\widehat{F}(\widehat{x}))=\widehat{x}$ for $\widehat{x}\in \widehat{I}\setminus \widehat{H}$ (see \eqref{eq:A1}) and $|\widehat{H}|=0$,  it follows that
\begin{equation}\label{eq:37}
	\int_l^r g^c(x)^2\mu_c(dx)=\int_{\widehat{I}}  \widehat{h}' (\widehat{x})^2d\widehat{x}<\infty.
\end{equation}
Hence $g^c\in L^2(I,\mu_c)$ is concluded.  
Now define for $x\in (l,r)$, 
\[
	f^+(x):=\int_{(0,x)}g^+(y)\mu^+_d(dy),\quad f^-(x):=\int_{(0,x]}g^-(y)\mu^-_d(dy).
\]
It suffices to show $f^c:=f-f^+-f^-\in \sS_c$.  Indeed,  take $0<x<r$ and we have
\begin{equation*}
\begin{aligned}
	f^c(x)-f^c(0)&=\widehat{f}(\bs(x))-\widehat{f}(0)-f^+(x)-f^-(x)=\int_{(0,\bs(x))\cap \widehat{I}} \widehat{h}'(\widehat{t})d\widehat{t}. 
\end{aligned}
\end{equation*}
Mimicking \eqref{eq:36} and \eqref{eq:37},  we get that
\[
f^c(x)-f^c(0)=\int_0^x g^c(y)\mu_c(dy).  
\]
On account of $g^c\in L^2(I,\mu_c)$,  we obtain that $f^c\in \sS_c$.  Therefore $f\in \sS$ is concluded. 

To the contrary,   take $f=f^c+f^++f^-\in \sS$ and let $\widehat{f}:=\widehat{\iota}f\in \widehat{\sS}$.  Define a function $\widehat{h}$ as follows:
\begin{equation}\label{eq:A5}
\begin{aligned}
	&\widehat{h}(\bs(x)):=\widehat{f}(\bs(x))=f(x),\quad x\in (l,r),  \\
	&\widehat{h}(\bs(x\pm)):=\widehat{f}(\bs(x\pm))=f(x\pm),\quad x\in D^\pm. 
\end{aligned}
\end{equation}
Note that $|\widehat{a}_k|+|\widehat{b}_k|<\infty$ due to $\widehat{I}\in \mathscr K$ (see \eqref{eq:15}).  
For $\widehat{x}\in (\widehat{a}_k,\widehat{b}_k),  k\geq 1$,  define further
\begin{equation}\label{eq:A6}
\widehat{h}(\widehat{x}):= \frac{\widehat{h}(\widehat b_k)-\widehat{h}(\widehat a_k)}{\widehat{b}_k-\widehat{a}_k}\cdot (\widehat{x}-\widehat{a}_k)+\widehat{h}(\widehat{a}_k). 
\end{equation}
We need to prove that $\widehat{h}\in \dot H^1_e((\widehat{l},\widehat{r}))$.  To do this,  define a function $\widehat{\varphi}$ as follows:
\begin{equation*}
	\widehat{\varphi}(\widehat{x}):=\left\lbrace 
	\begin{aligned}
	&\frac{df^c}{d\mu^c}(x),\quad \widehat{x}=\bs(x)\text{ with }x\in (l,r)\setminus H \text{ and the derivative exists}, \\
	&\frac{\widehat{h}(\widehat b_k)-\widehat{h}(\widehat a_k)}{\widehat{b}_k-\widehat{a}_k},\quad \widehat{x}\in (\widehat{a}_k,\widehat{b}_k), k\geq 1.  
	\end{aligned}
	\right. 
\end{equation*}
Note that $|\bs(H)|=|\widehat{H}|=0$ due to \eqref{eq:A1}.  
Let 
\[
	N:=\{x\in (l,r)\setminus H: df^c/d\mu^c(x)\text{ does not exist}\}, \quad \widehat{N}:=\bs(N).
	\]  
We point out that $|\widehat{N}|=0$,  so that $\widehat{\varphi}$ is defined a.e.  on $(\widehat{l},\widehat{r})$.  In fact,  $\widehat{N}\subset [\widehat{l},\widehat{r}]\setminus \widehat{H}$ and for $x\in N$,  $\bs(x)=\widehat{G}(\bs_c(x))=\widehat{F}^{-1}(\bs_c(x))$ because $\bs(x)\notin \widehat{H}$.  It follows that $\widehat{N}=\widehat{F}^{-1}(\bs_c(N))$, which is a subset of $\overline{\bs([l,r])}$,  and hence
\begin{equation}\label{eq:A8}
|\widehat N|=\int_{\widehat{F}^{-1}(\bs_c(N))}d\widehat{F}=|\bs_c(N)|=\mu_c\circ \bs_c^{-1}(\bs_c(N)),
\end{equation}
where the second and the third equalities hold because both $(d\widehat{F})\circ \widehat{F}^{-1}$ and $\mu_c\circ \bs^{-1}_c$,  i.e.  the image measures of $d\widehat{F}$ and $\mu_c$ under the maps $\widehat{F}$ and $\bs_c$ respectively,  are actually the Lebesgue measure.  Except for at most countably many points $\widehat t=\bs_c(x)\in \bs_c(N)$,  $\bs_c^{-1}(\{\widehat t\})$ is a singleton equalling $\{x\}$.   If is not a singleton,  $\bs_c^{-1}(\{\widehat t\})$ is an interval of zero $\mu_c$-measure.  Consequently,  \eqref{eq:A8} yields that $|\widehat{N}|=\mu_c(N)=0$.  
 Next,  we prove that $\widehat{\varphi}\in L^2((\widehat{l}, \widehat{r}))$.  
A straightforward computation yields that
\begin{equation}\label{eq:38}
\begin{aligned}
	\int_{\cup_{k\geq 1}(\widehat{a}_k,\widehat{b}_k)} \widehat{\varphi}(\widehat{x})^2d\widehat{x}&=\sum_{k\geq 1}\frac{\left(\widehat{h}(\widehat b_k)-\widehat{h}(\widehat a_k)\right)^2}{|\widehat{b}_k-\widehat{a}_k|} \\ 
	&=\sum_{x\in D^-}\frac{\left(f(x)-f(x-)\right)^2}{\bs(x)-\bs(x-)}+\sum_{x\in D^+}\frac{\left(f(x+)-f(x)\right)^2}{\bs(x+)-\bs(x)}  \\
	&=\int_I \left(\frac{df^-}{d\mu^-_d}\right)^2d\mu^-_d+\int_I \left(\frac{df^+}{d\mu^+_d}\right)^2d\mu^+_d<\infty.  
\end{aligned}
\end{equation}
Mimicking \eqref{eq:36} and \eqref{eq:37} and using \eqref{eq:A2}, we get that
\begin{equation}\label{eq:39-2}
\begin{aligned}
&\int_{(\widehat{l},\widehat{r})\setminus  \cup_{k\geq 1}(\widehat{a}_k,\widehat{b}_k)}   \widehat{\varphi}(\widehat{x})^2d\widehat{x}  \\  &\qquad = \int_l^r  \widehat{\varphi}(\widehat{G}(\bs_c(x)))^2d\bs_c(x) =\int_{I\setminus H} \widehat{\varphi}(\widehat{G}(\bs_c(x)))^2d\bs_c(x)\\
	&\qquad=\int_{I\setminus H} \widehat{\varphi}(\bs(x)))^2d\bs_c(x)  =\int_{I} \left(\frac{df^c}{d\mu^c}\right)^2d\mu^c<\infty. 
\end{aligned}\end{equation}
Hence $\widehat{\varphi}\in L^2((\widehat{l},\widehat{r}))$ is concluded.   Finally it suffices to show
\begin{equation}\label{eq:39}
	\widehat{h}(\widehat{x})-\widehat{h}(0)=\int_0^{\widehat{x}} \widehat{\varphi}(\widehat{t})d\widehat{t},\quad \widehat{x}\in (\widehat{l},\widehat{r}). 
\end{equation}
Consider first $\widehat{x}=\bs(x)$ for $x\in (0,r)\setminus D$.  The left hand side of \eqref{eq:39} is equal to
\begin{equation}\label{eq:314}
f(x)-f(0)=f^c(x)-f^c(0)+f^+(x)+f^-(x).  
\end{equation}
Note that 
\[
	f^\pm(x)=\pm\sum_{y\in (0,x)\cap D^\pm} (f(y\pm)-f(y))
\]
and 
\[
	\left(\cup_{y\in (0,x)\cap D^+} \left (\bs(y),\bs(y+)\right)\right) \cup \left(\cup_{y\in (0,x)\cap D^-} \left(\bs(y-),\bs(y)\right)\right) =\cup_{k: 0\leq  \widehat{a}_k<\widehat{b}_k\leq \bs(x)}(\widehat{a}_k,\widehat{b}_k).  
\]
It follows from \eqref{eq:A6} that
\begin{equation}\label{eq:315}
	f^+(x)+f^-(x)=\sum_{k: 0\leq \widehat{a}_k<\widehat{b}_k\leq \bs(x)}\int_{\widehat{a}_k}^{\widehat{b}_k} \widehat{\varphi}(\widehat{t})d\widehat{t}.  
\end{equation}
Mimicking \eqref{eq:36} and \eqref{eq:37},  we have
\[
	\int_{(0,\bs(x))\setminus \cup_{k\geq 1}(\widehat{a}_k,\widehat{b}_k)} \widehat{\varphi}(\widehat{t})d\widehat{t}=\int_{(0,x)}\widehat{\varphi}(\widehat{G}(\bs_c(y)))\mu_c(dy). 
\]
Since $\widehat{G}(\bs_c(y))=\bs(y)$ for $y\notin H$ and \eqref{eq:A2},    the last term is equal to
\begin{equation}\label{eq:316}
	\int_0^x \frac{df^c}{d\mu^c}d\mu^c=f^c(x)-f^c(0).  
\end{equation}
In view of \eqref{eq:314},  \eqref{eq:315} and \eqref{eq:316},  we obtain \eqref{eq:39} for $x\in (0,r)\setminus D$.  Analogously \eqref{eq:39} holds for $x\in (l,0)\setminus D$.  Due to the $\bs$-continuity of $f$,  $\widehat{h}$ is continuous on $(\widehat{l},\widehat{r})$.  By this continuity and $\widehat{\varphi}\in L^2((\widehat{l},\widehat{r}))$,  we can obtain \eqref{eq:39} for $\widehat{x}=\bs(x\pm)$ with $x \in D^\pm$.  Then the identity \eqref{eq:39} for $\widehat{x}\in \cup_{k\geq 1}(\widehat{a}_k,\widehat{b}_k)$ is obvious by means of \eqref{eq:A6}.  Eventually we conclude that $\widehat{h}\in \dot H^1_e((\widehat{l},\widehat{r}))$.  That completes the proof. 
\end{proof}

\section{Ray processes and Ray-Knight compactification}\label{APPB}

\subsection{Ray resolvent and Ray semigroup}

Let $\mathbf{F}$ be a compact metric space with Borel measurable $\sigma$-algebra $\mathcal{B}(F)$.  Further let $(U_\alpha)_{\alpha>0}$ be a \emph{Markov resolvent} on $\mathbf{F}$,  i.e.  for each $x\in \mathbf{F}$ and $\alpha>0$,  $\alpha U_\alpha(x,\cdot)$ is a probability measure on $\mathbf{F}$ and $(U_\alpha)_{\alpha>0}$ satisfies the resolvent equation
\[
	U_\alpha-U_\beta=(\beta-\alpha)U_\alpha U_\beta,\quad \forall \alpha, \beta>0.
\]
Set for $\alpha>0$,
\[
	\mathcal{S}_\alpha:=\{ f\in C(\mathbf{F}): f\geq 0, \beta U_{\alpha+\beta}f\leq f, \forall \beta\geq 0\},
\]
called the class of continuous \emph{$\alpha$-supermedian functions}.  

\begin{definition}
A Markov resolvent $(U_\alpha)_{\alpha>0}$ on $\mathbf{F}$ is called a \emph{Ray resolvent} if 
\begin{itemize}
\item[(i)] For each $\alpha>0$ and $f\in C(\mathbf{F})$,  $U_\alpha f\in C(\mathbf{F})$;
\item[(ii)] $\mathcal{S}_\infty:=\cup_{\alpha>0} \mathcal{S}_\alpha$ separates points of $\mathbf{F}$,  i.e.  for any $x,y\in \mathbf{F}$ with $x\neq y$,  there exists $f\in \mathcal{S}_\infty$ such that $f(x)\neq f(y)$.  
\end{itemize}
\end{definition}
\begin{remark}
The second condition in this definition can be replaced by a weaker one: $\mathcal{S}_1$ separates points of $\mathbf{F}$; see \cite[\S8.1]{CW05}.  
\end{remark}

The analytic part of Ray's theorem (see \cite{R59} and also \cite{RW00})  gives a \emph{Markov semigroup} $(P_t)_{t\geq 0}$ on $\mathbf{F}$,  i.e.  a family of kernels on $(F,\mathcal{B}(\mathbf{F}))$ such that for $x\in \mathbf{F}$ and $t\geq 0$,  $P_t(x,\cdot)$ is a probability measure on $\mathbf{F}$ and $P_{t+s}=P_tP_s$ for any $t,s\geq 0$.   Attention that the \emph{normal property},  i.e. $P_0(x,\cdot)=\delta_x$ for all $x\in \mathbf{F}$,  is not assumed for this definition of Markov semigroup! When $(P_t)_{t\geq 0}$ is further normal,  we call it a \emph{normal Markov semigroup}.   

\begin{theorem}[Ray]
Let $(U_\alpha)_{\alpha>0}$ be a Ray resolvent on $\mathbf{F}$.  Then there exists a unique Markov semigroup $(P_t)_{t\geq 0}$ such that
\begin{itemize}
\item[(i)] $t\mapsto P_tf(x)$ is right continuous on $[0,\infty)$ for each $f\in C(\mathbf{F})$ and $x\in \mathbf{F}$;
\item[(ii)] $U^\alpha f=\int_0^\infty e^{-\alpha t}P_tf dt$ for each $\alpha>0$ and $f\in C(\mathbf{F})$.   
\end{itemize} 
Furthermore 
\begin{equation}\label{eq:B1}
	D:=\{x\in \mathbf{F}: \alpha U^\alpha f(x)\rightarrow f(x)\;(\alpha\uparrow \infty), \forall f\in C(\mathbf{F})\}=\{x\in \mathbf{F}: P_0(x,\cdot)=\delta_x\}
\end{equation}
is Borel measurable.  
\end{theorem}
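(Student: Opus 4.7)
My plan is to follow the classical construction of a Ray semigroup from a Ray resolvent, combining analytic regularization of supermedian functions with a Yosida-type exponential approximation. The first step is to produce, for each $f \in \mathcal{S}_\alpha$, a regularized version $\hat f(x) := \sup_{\beta > 0} \beta U_{\alpha+\beta} f(x)$. Using the resolvent equation together with the inequality $\beta U_{\alpha+\beta} f \leq f$, one shows that $\hat f$ still lies pointwise in $\mathcal{S}_\alpha$, that $\beta U_{\alpha+\beta} \hat f \uparrow \hat f$ as $\beta \to \infty$, and that $\hat f = f$ precisely on the set $D$ of \eqref{eq:B1}. This furnishes a natural excessive version of every supermedian function and will be the building block for the semigroup.

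Next I would construct the semigroup via Yosida approximation. For each $\beta > 0$ the bounded Markovian operator $A_\beta := \beta(\beta U_\beta - I)$ on $C(\mathbf{F})$ generates a uniformly continuous Markov semigroup
\[
T^\beta_t := e^{tA_\beta} = e^{-\beta t} \sum_{k \geq 0} \frac{(\beta t)^k}{k!}(\beta U_\beta)^k.
\]
The task is to show that $P_t f(x) := \lim_{\beta \to \infty} T^\beta_t f(x)$ exists for every $f \in C(\mathbf{F})$, $x \in \mathbf{F}$ and $t \geq 0$. I would first establish convergence on a uniformly dense subalgebra of $C(\mathbf{F})$ generated by $\{U_\alpha g : \alpha > 0,\, g \in C(\mathbf{F})\}$ together with the constants; this family is dense by Stone--Weierstrass because it contains representatives of $\mathcal{S}_\infty$, which separates points by the Ray-resolvent hypothesis. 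The Markovian bound $\|T^\beta_t\|_\infty \leq 1$ then extends pointwise convergence to all of $C(\mathbf{F})$, with the limit realised as a Markov kernel $P_t(x, \cdot)$ on $(\mathbf{F}, \mathcal{B}(\mathbf{F}))$.

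The semigroup property $P_{t+s} = P_t P_s$, the right continuity of $t \mapsto P_t f(x)$ (using Step 1 for a monotonicity handle and continuity of each $T^\beta_t f(x)$), and the Laplace identity $U_\alpha f = \int_0^\infty e^{-\alpha t} P_t f \, dt$ all follow from the corresponding properties of $T^\beta_t$ by passing to the pointwise limit. Uniqueness is immediate from injectivity of the Laplace transform on right-continuous functions. For the Borel measurability of $D$, separability of $C(\mathbf{F})$ lets one fix a countable uniformly dense family $\{f_n\}$ and $\alpha_k \uparrow \infty$ and write
\[
D = \bigcap_{n,\ell} \bigcup_{k_0} \bigcap_{k \geq k_0}\bigl\{x \in \mathbf{F} : |\alpha_k U_{\alpha_k} f_n(x) - f_n(x)| < 1/\ell\bigr\},
\]
which is Borel since each $\alpha_k U_{\alpha_k} f_n$ is continuous. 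The identification $D = \{x : P_0(x,\cdot) = \delta_x\}$ then follows from the Abelian identity $\lim_{\alpha \to \infty} \alpha U_\alpha f(x) = \lim_{t \to 0+} P_t f(x) = \int f\, dP_0(x, \cdot)$, where the second equality uses the right continuity of $P_t f(x)$ at $t = 0$.

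The principal obstacle is the Yosida step: the standard Hille--Yosida argument requires strong convergence $\alpha U_\alpha \to I$ on $C(\mathbf{F})$, which genuinely fails off $D$. The Ray-resolvent hypotheses are designed to circumvent exactly this, but care is required to work with pointwise rather than uniform limits and to verify that the algebraic operations---semigroup property and Laplace inversion---survive the passage to a merely pointwise limit. The monotone regularization of Step 1 is precisely what keeps the pointwise right continuity in $t$ under control and makes the whole scheme go through.
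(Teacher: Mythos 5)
The paper offers no proof of this statement: it is quoted as a classical theorem of Ray with references to \cite{R59} and \cite{RW00}, so there is no internal argument to compare yours against; your proposal has to stand on its own against the standard construction, and it does not.

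The decisive gap is your density claim. The algebra generated by $\{U_\alpha g:\alpha>0,\ g\in C(\mathbf{F})\}$ and the constants is \emph{not} dense in $C(\mathbf{F})$ in general, and it does not ``contain representatives of $\mathcal{S}_\infty$'': an $\alpha$-supermedian function satisfies $\beta U_{\alpha+\beta}f\le f$ but need not lie in (or be approximable from) the range of the resolvent. Concretely, take $\mathbf{F}=\{a,b\}$ with $P_t(a,\cdot)=P_t(b,\cdot)=\delta_a$ for $t>0$ and $P_0(b,\cdot)=\delta_a$, so that $U_\alpha(x,\cdot)=\alpha^{-1}\delta_a$ for both points. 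This is a Ray resolvent ($\mathcal{S}_\alpha=\{f\ge 0: f(b)\ge f(a)\}$ separates $a$ and $b$), yet every $U_\alpha g$ is constant, so your subalgebra is exactly the constants and separates nothing; Stone--Weierstrass gives you nothing here. This is not a technicality: whenever branching points exist, every $h$ in the closed linear span of the resolvent range satisfies $P_0h=h$, so that span cannot be dense. The correct separating class is $\mathcal{S}_\infty-\mathcal{S}_\infty$, which is dense by the \emph{lattice} form of Stone--Weierstrass ($\mathcal{S}_\infty$ is a min-stable convex cone containing the constants, by your own axiom on Ray resolvents), and the semigroup must be built on $\mathcal{S}_\infty$ directly rather than on potentials.

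The second problem is the Yosida step itself. The estimate $\|T^\beta_tf-T^\gamma_tf\|_\infty\le t\,\|A_\beta f-A_\gamma f\|_\infty$ that drives the Trotter--Kato/Hille--Yosida argument needs \emph{uniform} convergence of the approximate generators on a core; here even on the range of the resolvent one computes $A_\beta U_\alpha g=\frac{\alpha\beta}{\beta-\alpha}U_\alpha g-\frac{\beta^2}{\beta-\alpha}U_\beta g$, which fails to converge uniformly precisely because $\beta U_\beta g$ does not. Pointwise convergence of $A_\beta f$ cannot be fed into that estimate, so your passage from ``convergence on a dense set'' to a limiting kernel $P_t(x,\cdot)$ is not justified. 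The classical route avoids exponentials entirely: for $f\in\mathcal{S}_\alpha$ the resolvent equation gives $(-1)^n\frac{d^n}{d\lambda^n}U_\lambda f=n!\,U_\lambda^{n+1}f\ge 0$, so $\lambda\mapsto U_\lambda f(x)$ is completely monotone; the Hausdorff--Bernstein--Widder theorem represents it as a Laplace transform, the supermedian property forces the representing measure to have a decreasing density, and $P_tf(x)$ is defined as its right-continuous version. Linearity, density of $\mathcal{S}_\infty-\mathcal{S}_\infty$, and uniqueness of Laplace transforms then give the semigroup property, right continuity in $t$, and uniqueness. Your Step 1 (the excessive regularization $\hat f=\sup_\beta\beta U_{\alpha+\beta}f$) and your treatment of the Borel measurability of $D$ are sound, but they sit on top of a construction of $(P_t)_{t\ge0}$ that, as proposed, does not go through.
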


The Markov semigroup in this theorem is called the \emph{Ray semigroup}  of $(U_\alpha)_{\alpha>0}$ and the set $D$ defined as \eqref{eq:B1} is called the set of \emph{non-branching points}.  Accordingly $B:=\mathbf{F}\setminus D$ is called the set of \emph{branching points}.

It is well known that a Markov semigroup $(P_t)_{t\geq 0}$ is called a \emph{Feller semigroup} if it is normal and acts as a strongly continuous semigroup on $C(\mathbf{F})$.  Clearly a Feller semigroup is always a Ray semigroup.  The converse is not correct but we have the following result; see \cite[III.  (37.1)]{RW00}.  

\begin{proposition}
The Ray semigroup $(P_t)_{t\geq 0}$ is a Feller semigroup if and only if the set $B$ of branching points is empty.  
\end{proposition}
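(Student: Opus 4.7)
For the forward direction, if $(P_t)_{t\geq 0}$ is a Feller semigroup, then by definition $(P_t)$ is normal, i.e.\ $P_0(x,\cdot)=\delta_x$ for every $x\in\mathbf{F}$; the characterization \eqref{eq:B1} then forces $D=\mathbf{F}$, hence $B=\emptyset$. No further argument is needed.

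Assume conversely $B=\emptyset$, equivalently $D=\mathbf{F}$. Normality of $(P_t)$ is immediate from \eqref{eq:B1}. To conclude that $(P_t)$ is Feller, by the Hille--Yosida theorem (in its resolvent form) together with Laplace-transform uniqueness, it suffices to prove that $\alpha U^\alpha f\to f$ uniformly on $\mathbf{F}$ for every $f\in C(\mathbf{F})$: the resolvent $(U^\alpha)_{\alpha>0}$ will then generate a strongly continuous contraction semigroup on $C(\mathbf{F})$, and this semigroup must coincide with $(P_t)$. At this stage we already have the pointwise convergence $\alpha U^\alpha f(x)\to f(x)$ for each $x\in\mathbf{F}$ and $f\in C(\mathbf{F})$---this is precisely the assumption $D=\mathbf{F}$.

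The pointwise-to-uniform upgrade proceeds through Dini's theorem applied to continuous supermedian functions. For $f\in\mathcal{S}_0$ the resolvent equation yields the elementary identity
\[
	\alpha U^\alpha f-\beta U^\beta f=(\beta-\alpha)\,U^\beta(\alpha U^\alpha f-f),\qquad 0<\alpha<\beta,
\]
which combined with the supermedian bound $\alpha U^\alpha f\leq f$ and the positivity of $U^\beta$ shows that $\gamma\mapsto\gamma U^\gamma f$ is pointwise monotone increasing. Since this monotone family converges pointwise to the continuous function $f$ on the compact space $\mathbf{F}$, Dini's theorem promotes the convergence to uniform. Applying the same argument to the shifted Ray resolvent $\tilde U^\beta:=U^{\alpha+\beta}$---for which the original class $\mathcal{S}_\alpha$ is nothing but the zeroth class of $\tilde U$---yields uniform convergence $\beta U^{\alpha+\beta}f\to f$ for $f\in\mathcal{S}_\alpha$ and any $\alpha>0$; writing $\gamma U^\gamma f=\beta U^{\alpha+\beta}f+\alpha U^{\alpha+\beta}f$ with $\gamma=\alpha+\beta$ and using $\|U^{\alpha+\beta}f\|_\infty\leq\|f\|_\infty/(\alpha+\beta)$ converts this into uniform convergence $\gamma U^\gamma f\to f$ as $\gamma\to\infty$. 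By linearity this extends to the vector span of $\mathcal{S}_\infty$, and the uniform bound $\|\alpha U^\alpha\|_{C\to C}\leq 1$ propagates it to the sup-norm closure of that span.

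The main obstacle I anticipate is to show that the vector span of $\mathcal{S}_\infty$ is sup-norm dense in $C(\mathbf{F})$. The cone $\mathcal{S}_\infty$ contains the constants, separates points of $\mathbf{F}$ by the defining property of a Ray resolvent, and is stable under finite $\min$ and under addition (the latter across different orders $\alpha$, with a mild book-keeping argument); however, it is not visibly closed under multiplication, so the algebra version of the Stone--Weierstrass theorem does not apply directly. The standard remedy, carried out for instance in Rogers--Williams III.(37.1), is to pass to the smallest Riesz (vector-lattice) subspace of $C(\mathbf{F})$ generated by $\mathcal{S}_\infty$ and to invoke the Kakutani--Stone lattice form of Stone--Weierstrass, whose hypotheses---separation of points, containing the constants, and closure under pointwise $\max$ and $\min$---are then all in place. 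Once density is secured, $\alpha U^\alpha\to I$ strongly on all of $C(\mathbf{F})$, Hille--Yosida furnishes a Feller semigroup with resolvent $(U^\alpha)$, and Laplace uniqueness identifies it with $(P_t)$.
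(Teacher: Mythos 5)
Your proof is correct; the paper itself gives no proof of this proposition, simply citing \cite[III.~(37.1)]{RW00}, and your argument --- monotonicity of $\gamma\mapsto\gamma U^\gamma f$ on continuous supermedian functions, Dini's theorem on the compact space $\mathbf{F}$, lattice Stone--Weierstrass density of $\mathcal{S}_\infty-\mathcal{S}_\infty$ in $C(\mathbf{F})$, and Hille--Yosida together with Laplace-transform uniqueness and the right-continuity of $t\mapsto P_tf(x)$ --- is essentially the standard one carried out in that reference. I see no gaps.
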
 

\subsection{Ray process}

The probabilistic part of Ray's theorem is to introduce a Ray process associated to $(U_\alpha)_{\alpha>0}$.  We state a canonical method that can be found in \cite{Ge75}.  To do this,  set the family of sample paths
\[
\Omega:=\{\omega: [0,\infty)\rightarrow \mathbf{F}  \text{ is c\'adl\'ag and }\omega(t)\in D\text{ for any }t\geq 0\}
\]
and $X_t(\omega):=\omega(t)$ for $t\geq 0$.  Let $\mathcal{F}^0:=\sigma(X_t: t\geq 0)$ be the $\sigma$-algebra on $\Omega$ generated by $X:=(X_t)_{t\geq 0}$.  To make the definition of (strong) Markov process rigorous,  one need to introduce a so-called natural filtration as well as its augmentation.  For the sake of brevity we omit these details,  which are referred to in,  e.g.,  \cite{S88}.  

\begin{theorem}[Ray]
Let $(U_\alpha)_{\alpha>0}$ be a Ray resolvent on $\mathbf{F}$ and $(P_t)_{t\geq 0}$ be its Ray semigroup.  For any probability measure $\mu$ on $(\mathbf{F}, \mathcal{B}(\mathbf{F}))$,  there exists a unique probability measure $\mathbf{P}_\mu$ on $(\Omega, \mathcal{F}^0)$ under which $X$ is a c\'adl\'ag strong Markov process with transition semigroup $(P_t)_{t\geq 0}$ and initial distribution $\mu P_0$.  
\end{theorem}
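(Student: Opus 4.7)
The plan is to realize $\mathbf{P}_\mu$ through three classical steps: construct the finite-dimensional distributions via Kolmogorov extension, regularize the resulting coordinate process using the supermartingale structure inherent to Ray resolvents, and then show the c\'adl\'ag version almost surely takes values in the non-branching set $D$.

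First I would use the semigroup identity $P_{t+s}=P_tP_s$ to define a consistent family of finite-dimensional distributions starting from $\mu P_0$; Kolmogorov's extension theorem then produces a measure $\widetilde{\mathbf{P}}_\mu$ on $\mathbf{F}^{[0,\infty)}$ under which the coordinate process $\widetilde X=(\widetilde X_t)_{t\ge 0}$ is Markov with these transitions and initial law $\mu P_0$. This step is routine and uses only the Polish structure of $\mathbf{F}$.

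Next comes the key regularization. A classical lemma says that every $f\in \mathcal{S}_\alpha$ satisfies $e^{-\alpha t}P_tf\le f$ for all $t\ge 0$, so $M^f_t:=e^{-\alpha t}f(\widetilde X_t)$ is a bounded nonnegative $\widetilde{\mathbf{P}}_\mu$-supermartingale in its natural filtration. Because $\mathbf{F}$ is compact metric and $C(\mathbf{F})$ is separable, a countable family $\{f_n\}\subset \mathcal{S}_\infty$ still separates points of $\mathbf{F}$. Applying Doob's supermartingale regularity theorem simultaneously to all $M^{f_n}$, outside a null set the one-sided limits of $M^{f_n}_s$ as $s$ approaches $t$ along rationals exist for every $t\ge 0$ and every $n$. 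Since $\{f_n\}$ separates points of the compact space $\mathbf{F}$, pathwise this forces the existence of right and left limits of $\widetilde X$ along the rationals; the right limit defines a c\'adl\'ag modification $X$ of $\widetilde X$ in $\mathbf{F}$.

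The main obstacle is showing $X_t\in D$ for every $t\ge 0$ almost surely, and then verifying the strong Markov property. For the first, I would use the characterization $D=\{x:\alpha U^\alpha g(x)\to g(x),\ \forall g\in C(\mathbf{F})\}$ from \eqref{eq:B1}: at any fixed $t$, the identity $U^\alpha g(\widetilde X_t)=\int_0^\infty e^{-\alpha s}P_sg(\widetilde X_t)\,ds$, combined with the right-continuity of $s\mapsto P_sg(\widetilde X_t)$ coming from the Ray semigroup property, yields $\alpha U^\alpha g(\widetilde X_t)\to g(\widetilde X_t)$ almost surely, so $\widetilde X_t\in D$ a.s. Coupled with the c\'adl\'ag property of $X$, one obtains $X_t\in D$ for all $t$ simultaneously a.s. Restricting to this full-measure event and projecting onto $\Omega$ produces $\mathbf{P}_\mu$. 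The strong Markov property then follows by the standard Feller-type argument: since $U^\alpha C(\mathbf{F})\subset C(\mathbf{F})$ and $\alpha U^\alpha f\to f$ on $D$, the simple Markov property extends from deterministic times to arbitrary stopping times via dyadic approximation together with right-continuity of paths and of the augmented filtration. Uniqueness of $\mathbf{P}_\mu$ on $\mathcal{F}^0$ is immediate because the finite-dimensional distributions of $X$ are completely determined by $(P_t)_{t\ge 0}$ and $\mu P_0$.
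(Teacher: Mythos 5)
The paper does not actually prove this theorem; it quotes it as Ray's classical result (pointing to \cite{R59}, \cite{Ge75}, \cite{RW00}, \cite{S88}), so your proposal has to be measured against the standard proof, whose outline --- Kolmogorov extension, supermartingale regularization through supermedian functions, then the branching-point analysis and the strong Markov property --- you reproduce correctly. The gap is in the branching-point step, which is the technical heart of the theorem. First, your argument that $\widetilde X_t\in D$ a.s.\ is circular: right-continuity of $s\mapsto P_sg(x)$ at $s=0$ gives $\alpha U_\alpha g(x)\to P_0g(x)$ for \emph{every} $x\in\mathbf{F}$, and $P_0g(x)=g(x)$ for all $g$ is precisely the definition of $x\in D$, so nothing has been shown. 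The correct route uses the supermedian structure: for $g\in\mathcal{S}_\alpha$ one has $P_0g\le g$ pointwise, while $\mathbf{E}\left[g(\widetilde X_t)-P_0g(\widetilde X_t)\right]=\mu P_t(g-P_0g)=0$ because $P_tP_0=P_t$; hence $P_0g(\widetilde X_t)=g(\widetilde X_t)$ a.s., and a countable sup-norm-dense subfamily of $\mathcal{S}_\infty$ together with the lattice form of Stone--Weierstrass (the cone $\mathcal{S}_\infty$ contains constants, is closed under $\wedge$, and separates points) identifies $P_0(\widetilde X_t,\cdot)=\delta_{\widetilde X_t}$.

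Second, and more seriously, ``coupled with the c\`adl\`ag property of $X$, one obtains $X_t\in D$ for all $t$ simultaneously a.s.'' is a non sequitur: $D$ is only a Borel ($G_\delta$) set, not closed, so a right-continuous path lying in $D$ at all rational times can perfectly well have a right limit in $B=\mathbf{F}\setminus D$ at some irrational time. Proving $\mathbf{P}_\mu(X_t\in D\ \text{for all } t)=1$ requires showing that, for each $n$, the right-continuous process $f_n(X_t)$ and the process $P_0f_n(X_t)$ are indistinguishable, exploiting that $P_0f_n=\sup_{\beta}\beta U_{\alpha_n+\beta}f_n$ is an increasing limit of continuous functions together with a supermartingale (or optional section) argument; this is where the real work of Ray's theorem lies and it is absent from your sketch. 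Two smaller points: the regularized $X$ need not be a \emph{modification} of $\widetilde X$ (equality a.s.\ at each fixed $t$ is neither needed nor automatic at branching times --- the theorem only asserts a law on the canonical space of $D$-valued c\`adl\`ag paths), and your treatment of the strong Markov property by right discretization of stopping times is fine as sketched, since $U_\alpha$ preserves $C(\mathbf{F})$ and paths are right-continuous.
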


The Markov process $X$ in this theorem is called a \emph{Ray process}.  Its initial distribution, not necessarily equalling $\mu$, is a probability measure supported on $D$,  i.e.  $\mu P_0(B)=0$.  In addition,  $X_t$ takes values in $D$ (while $X_{t-}$ may take values in $B$), and the restriction of $X$ to $D$ is a \emph{Borel right process}; see \cite[Chapter 1,  Theorem~9.13]{S88}.  For further properties of Ray processes we refer readers to \cite{Ge75,  S88,  CW05,  RW00}.  

\subsection{Ray-Knight compactification}

Ray processes are truly important because ``every Markov process is a Ray process at heart".  In his original paper \cite{R59},  Ray suggested the idea that every Markov process could be embedded in a Ray process,  although his proposed scheme had a subtle flaw.  Then Knight \cite{K65} patched it with a seemingly-innocuous lemma.  This is the reason why the name \emph{Ray-Knight compactification} appeared afterwards.    

For simplification let us begin with a locally compact space $E$ with countable basis.  Attach an additional point $\partial$ so that $E_\partial:= E\cup \{\partial\}$ becomes the Alexandroff compactification of $E$.  Denote by $\mathcal{B}_b(E_\partial)$ (resp.  $\mathcal{B}_b^+(E_\partial)$) the family of all bounded (resp.  positive bounded) Borel measurable functions on $E_\partial$.  For each $f\in \mathcal{B}_b(E_\partial)$ set $\|f\|_\infty:=\sup_{x\in E_\partial}|f(x)|$.  The ingredients for Ray-Knight compactification are the following two:
\begin{itemize}
\item[(1)] a Markov resolvent $(R_\alpha)_{\alpha>0}$ on $E_\partial$; 
\item[(2)] a family $G\subset \mathcal{S}_\infty\cap \mathcal{B}^+_b(E_\partial)$,  which separates $E_\partial$ and is separable with respect to $\|\cdot\|_\infty$.  
\end{itemize}
Here the separable property means that there exists a sequence $\{f_n:n\geq 1\}\subset G$ such that for any $f\in G$,  a certain subsequence of $\{f_n:n\geq 1\}$ converges to $f$ with respect to $\|\cdot\|_\infty$.  Note that $(R_\alpha)_{\alpha>0}$ can be easily obtained from a given Markov process on $E$ (the strong Markov property is not necessarily imposed).  The family $G$ is slightly difficult to put forward.  One typical way is to take a separable set $\mathcal{A}\subset \mathcal{B}^+_b(E_\partial)$ and then to verify $G:=\{R_\alpha f: f\in \mathcal{A}, \alpha>0\}$ separates $E_\partial$.  With $(R_\alpha)_{\alpha>0}$ and $G$ at hand,  Knight \cite{K65} proved the following lemma.

\begin{lemma}[Knight]\label{LMB6}
There exists a unique minimal convex cone $S(G)\subset \mathcal{B}^+_b(E_\partial)$ such that 
\begin{itemize}
\item[\rm (i)] $G\cup \{1_{E_\partial}\}\subset S(G)$ and $S(G)$ is separable with respect to $\|\cdot\|_\infty$;
\item[\rm (ii)] $R_\alpha f\in S(G)$ for any $\alpha>0$ and $f\in S(G)$;
\item[\rm (iii)] $f\wedge g\in S(G)$ for any $f,g\in S(G)$.  
\end{itemize}
\end{lemma}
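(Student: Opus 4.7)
Plan. I would define $S(G)$ as the intersection of all convex cones $C\subset\mathcal{B}_b^+(E_\partial)$ that contain $G\cup\{1_{E_\partial}\}$ and satisfy (ii)--(iii). Because the two closure conditions survive arbitrary intersection, $S(G)$ is itself a convex cone containing $G\cup\{1_{E_\partial}\}$, closed under every $R_\alpha$ ($\alpha>0$) and under the binary minimum, and is manifestly the smallest such cone; so uniqueness will follow from minimality by a standard argument. The only nontrivial task is therefore to verify the separability clause of (i).

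Next I would describe $S(G)$ from below. Set $S_0:=G\cup\{1_{E_\partial}\}$, and given $S_n$ let $S_{n+1}:=S_n\cup\{af+bg:a,b\geq 0,\ f,g\in S_n\}\cup\{f\wedge g:f,g\in S_n\}\cup\{R_\alpha f:\alpha>0,\ f\in S_n\}$. Any finite collection in $\bigcup_n S_n$ lies in a common $S_n$, so the union $S_\omega:=\bigcup_{n\geq 0}S_n$ is a convex cone closed under all $R_\alpha$ and under $\wedge$, and it contains $G\cup\{1_{E_\partial}\}$; minimality forces $S(G)=S_\omega$. In parallel I would manufacture a countable candidate dense set: fix a countable $\|\cdot\|_\infty$-dense subset $G_0\subset G$, put $T_0:=G_0\cup\{1_{E_\partial}\}$, and iteratively adjoin to $T_n$ the rational positive combinations $\{rf+sg:r,s\in\mathbb{Q}\cap[0,\infty),\ f,g\in T_n\}$, the binary minima, and $\{R_\alpha f:\alpha\in\mathbb{Q}\cap(0,\infty),\ f\in T_n\}$. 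Each $T_n$ is countable, so $T_\omega:=\bigcup_n T_n$ is a countable subfamily of $S(G)$.

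Separability then reduces to $S(G)\subset\overline{T_\omega}^{\|\cdot\|_\infty}$, which I would prove by induction $S_n\subset\overline{T_\omega}^{\|\cdot\|_\infty}$. The base step is immediate from denseness of $G_0$ in $G$ and $1_{E_\partial}\in T_0$. The inductive step rests on three $\|\cdot\|_\infty$-continuity facts: positive linear combinations are jointly continuous, so $af+bg$ is approximable by $r_kf_k+s_kg_k$ with $r_k,s_k\in\mathbb{Q}$ approaching $a,b$ and $f_k,g_k\in T_\omega$ approaching $f,g$; the lattice operation is $1$-Lipschitz, $\|f\wedge g-f'\wedge g'\|_\infty\leq\|f-f'\|_\infty+\|g-g'\|_\infty$; and for the resolvent the contractivity $\|R_\alpha h\|_\infty\leq\alpha^{-1}\|h\|_\infty$ combined with the resolvent equation yields $\|R_\alpha f-R_\beta g\|_\infty\leq\alpha^{-1}\|f-g\|_\infty+|\beta-\alpha|(\alpha\beta)^{-1}\|g\|_\infty$, so $R_\alpha f$ ($\alpha>0$) is approximable by $R_{\alpha_k}f_k$ with $\alpha_k\in\mathbb{Q}\cap(0,\infty)$ converging to $\alpha$ and $f_k\in T_\omega$ converging to $f$, and each such approximant already lies in $T_\omega$ by construction.

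The main obstacle will be precisely the resolvent step in this induction: one must simultaneously discretise the real parameter $\alpha>0$ and replace the argument $f$ by a $T_\omega$-element while keeping uniform control at every level $S_n$, and this is exactly where the resolvent equation becomes indispensable and where one sees that closing $T_\omega$ under $R_\alpha$ with rational $\alpha$ alone is sufficient to reach every real $\alpha>0$ in the uniform closure.
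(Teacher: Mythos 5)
Your proposal is correct, and it is essentially the standard construction of the Ray cone that the paper itself does not prove but cites from Knight and from Getoor (\cite[Proposition~10.1]{Ge75}): one realizes the minimal cone as an increasing union of countably many stages and verifies separability via a countable skeleton built from a dense countable subset of $G$, rational coefficients, and rational resolvent parameters. The three uniform-continuity estimates you isolate (for positive combinations, for $\wedge$, and the resolvent-equation bound $\|R_\alpha f-R_{\alpha_k}f_k\|_\infty\leq \alpha^{-1}\|f-f_k\|_\infty+|\alpha_k-\alpha|(\alpha\alpha_k)^{-1}\|f_k\|_\infty$) are exactly the ingredients of that argument, so nothing is missing.
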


The convex cone $S(G)$ is usually called a \emph{Ray cone}.  The Ray-Knight compactification is established in the following theorem.

\begin{theorem}[Ray-Knight compactification]\label{THMB7}
Let $(R_\alpha)_{\alpha>0}, G$ be given above and $S(G)$ be the Ray cone obtained in Lemma~\ref{LMB6}.  Then there exists a compact metric space $\mathbf{F}$ and a Ray resolvent $(\bar{R}_\alpha)_{\alpha>0}$ on $\bF$ such that
\begin{itemize}
\item[\rm (i)] $E_\partial$ is a Borel subset of $\bF$ and dense in $\bF$ with respect to the metric $\rho$ of $\bF$,  and $\mathcal{B}(E_\partial)=\{A\cap E_\partial: A\in \mathcal{B}(\bF)\}$;
\item[\rm (ii)] each $f\in S(G)$ can be extended to a function $\bar{f}\in C(\bF)$; 
\item[\rm (iii)] $\overline{S(G)}-\overline{S(G)}:=\{\bar{f}-\bar{g}: f,g\in S(G)\}$ is dense in $C(\bF)$;
\item[\rm (iv)] $\bar{R}_\alpha(x,A)=R_\alpha(x,A)$ for any $\alpha>0,  x\in E_\partial,  A\in \mathcal{B}(E_\partial)$.  
\end{itemize}
\end{theorem}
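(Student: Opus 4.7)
The plan is to realize $\bF$ as the closure of a suitable embedding of $E_\partial$ into the Hilbert cube using a countable $\|\cdot\|_\infty$-dense subset of the Ray cone $S(G)$. Invoking Lemma~\ref{LMB6}(i), pick a sequence $\{f_n:n\geq 1\}\subset S(G)$ that is $\|\cdot\|_\infty$-dense in $S(G)$ with $\|f_n\|_\infty\leq 1$ after rescaling. Because $G\subset S(G)$ separates points of $E_\partial$, so does $\{f_n\}$. Define the injective Borel map
\[
\Phi:E_\partial\longrightarrow [0,1]^{\mathbb{N}},\qquad \Phi(x):=\bigl(f_n(x)\bigr)_{n\geq 1},
\]
equip $[0,1]^{\mathbb{N}}$ with the compact metric $\rho((a_n),(b_n)):=\sum_{n\geq 1} 2^{-n}|a_n-b_n|$, and set $\bF:=\overline{\Phi(E_\partial)}$. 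Then $\bF$ is a compact metric space and, after identifying $E_\partial$ with $\Phi(E_\partial)$, $E_\partial$ becomes a dense subset of $\bF$.

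I would then establish (i)--(iii). Each coordinate projection restricted to $\bF$ yields a continuous extension $\bar f_n\in C(\bF)$ of $f_n$, and uniform density of $\{f_n\}$ in $S(G)$ produces, for each $f\in S(G)$, a continuous extension $\bar f\in C(\bF)$, giving (ii). Setting $V:=\{\bar f-\bar g:f,g\in S(G)\}$, one checks that $V$ is a linear subspace of $C(\bF)$ containing the constants (since $1_{E_\partial}\in S(G)$ by Lemma~\ref{LMB6}(i)) and separating points of $\bF$ (by construction of $\Phi$); since $S(G)$ is closed under $\wedge$ by Lemma~\ref{LMB6}(iii), the identity $(f_1-g_1)\wedge (f_2-g_2)=\bigl((f_1+g_2)\wedge (f_2+g_1)\bigr)-(g_1+g_2)$ shows that $V$ is a vector sublattice, so the Kakutani--Krein form of the Stone--Weierstrass theorem yields (iii). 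For the Borel identification in (i), $\Phi$ is an injective Borel map between standard Borel spaces, hence by Kuratowski/Lusin--Souslin the image $\Phi(E_\partial)$ is a Borel subset of $\bF$ and $\Phi$ is a Borel isomorphism onto its image; the countable separating family $\{f_n\}\subset\mathcal{B}_b(E_\partial)$ then generates $\mathcal B(E_\partial)$, from which $\mathcal B(E_\partial)=\{A\cap E_\partial:A\in\mathcal B(\bF)\}$ follows.

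For (iv) and the Ray resolvent property I would exploit that $R_\alpha$ preserves $S(G)$ by Lemma~\ref{LMB6}(ii), so for every $f\in S(G)$ the function $R_\alpha f$ already has a continuous extension $\overline{R_\alpha f}\in C(\bF)$. Define $\bar R_\alpha$ on the subset of continuous extensions by $\bar R_\alpha\bar f:=\overline{R_\alpha f}$; this is well-defined because $f\mapsto\bar f$ is injective. Since $\|\alpha R_\alpha g\|_\infty\leq\|g\|_\infty$ and $R_\alpha$ is positive, $\bar R_\alpha$ extends uniquely to a bounded positive linear operator on $C(\bF)$ by the density from (iii). For each $x\in\bF$, Riesz--Markov represents $\bar f\mapsto\bar R_\alpha\bar f(x)$ as a sub-probability measure $\bar R_\alpha(x,\cdot)$ on $\bF$; condition (iv) is automatic because $\overline{R_\alpha f}|_{E_\partial}=R_\alpha f$ for $f\in S(G)$, and $S(G)$ generates $\mathcal B(E_\partial)$. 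The resolvent identity and the total-mass equality $\alpha\bar R_\alpha 1=1$ then propagate from $(R_\alpha)$ to $(\bar R_\alpha)$ by continuity in $\|\cdot\|_\infty$, and the extensions of functions in $S(G)\subset\mathcal S_\infty$ remain $\alpha$-supermedian for $\bar R_\alpha$ and separate points of $\bF$ by construction, so $(\bar R_\alpha)_{\alpha>0}$ is a Ray resolvent.

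The main obstacle will be this last step: producing genuine Markov kernels $\bar R_\alpha(x,\cdot)$ at the newly added points $x\in\bF\setminus E_\partial$ and verifying that they inherit both the Markov (unit total mass) property and the resolvent identity on all of $\bF$. The delicate ingredient is that one has \emph{no} continuity of the original $R_\alpha$ on $E_\partial$ relative to the new topology of $\bF$; everything must be bootstrapped through the distinguished dense subclass consisting of continuous extensions of elements of $S(G)$, relying crucially on the closure properties in Lemma~\ref{LMB6}(ii)--(iii) and on the uniform density of $V$ in $C(\bF)$ from (iii). Once these kernels are in place, conditions (i)--(iv) and the Ray axioms follow by routine approximation arguments.
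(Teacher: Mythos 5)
Your proposal is correct and follows essentially the same route the paper takes: the paper does not prove Theorem~\ref{THMB7} in full but sketches exactly this construction in Remark~\ref{RMB8} (embedding $E_\partial$ into a metrizable product via a countable $\|\cdot\|_\infty$-dense separating subfamily of $S(G)$, taking the closure to get $\bF$, and defining $\bar{R}_\alpha\bar{f}:=\overline{R_\alpha f}$ on $S(G)$ before extending by density), deferring the remaining details to \cite[Theorem~8.25]{CW05} and \cite{Ge75}. Your filling-in of those details (lattice Stone--Weierstrass for (iii), Lusin--Souslin for the Borel identification in (i), Riesz--Markov plus the density of $\overline{S(G)}-\overline{S(G)}$ for the kernels) is the standard and correct way to complete that sketch.
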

\begin{remark}\label{RMB8}
Here below is a standard argument to obtain $\bF$.  Take a sequence $\{h_n: n\geq 1\}\subset S(G)$ separating $E_\partial$ such that every $f\in S(G)$ is an accumulation point of $\{h_n: n\geq 1\}\subset S(G)$ with respect to $\|\cdot\|_\infty$.  Put 
\[
	\mathbf{W}:=\prod_{n\geq 1} [-\|h_n\|_\infty, \|h_n\|_\infty],
\]
which is a compact metric space endowed with certain metric,  e.g.,  $$\mathbf{d}(\xi,\eta):=\sum_{n\geq 1}2^{-n}\frac{|\xi_n-\eta_n|}{1+|\xi_n-\eta_n|},\quad \xi=(\xi_n)_{n\geq 1},  \eta=(\eta_n)_{n\geq 1}\in \mathbf{W}. $$
One may verify that 
\begin{equation}\label{eq:B2}
	\Phi: E\rightarrow \mathbf{W},\quad x\mapsto (h_n(x))_{n\geq 1}
\end{equation}
is a Borel measurable injection (see \cite[\S11.3]{Ge75}).  Let $\rho$ be the induced metric on $E$ with respect to $\mathbf{d}$,  i.e.  $\rho(x,y):=\mathbf{d}(\Phi(x),\Phi(y))$ for $x,y\in E$.  Eventually the completion of $E$ with respect to $\rho$ is the desirable compact metric space $\bF$.  This metric space depends on $G$ but not the choice of $\{h_n: n\geq 1\}$.  

The approach to $\bar{R}_\alpha$ is slightly complicated and 
the key step is to put 
\[
\bar{R}_\alpha \bar{f}:=\overline{R_\alpha f}\in C(\bF),\quad \forall f\in S(G).
\]
For more details we refer readers to,  e.g.,  \cite[Theorem~8.25]{CW05}.
\end{remark}

The compact metric space $\bF$ is called the Ray-Knight compactification of $E_\partial$ and $(\bar{R}_\alpha)_{\alpha>0}$ is called the Ray-Knight compactification of $(R_\alpha)_{\alpha>0}$.  When $(R_\alpha)_{\alpha>0}$ corresponds to a certain Markov process,  the Ray process corresponding to $(\bar{R}_\alpha)_{\alpha>0}$ is also called its Ray-Knight compactification.  We should emphasis that the relative topology of $E_\partial$ with respect to $\rho$ may be different from the original one.  In general they are not comparable. 

\subsection{Uniqueness of Ray-Knight compactification}

The Ray-Knight compactification for a Markov resolvent is not canonical because it depends on the choice of $G$ (or the Ray cone $S(G)$); see \cite[\S8.10]{CW05}.  However when $S(G)$ is fixed,  the conditions in Theorem~\ref{THMB7} determine the Ray-Knight compactification uniquely.  That is the following.

\begin{lemma}\label{LMB9}
Adopt the same notations of Theorem~\ref{THMB7}.  Let $\bF'$ be another compact metric space with the metric $\rho'$ and another Ray resolvent $(\bar{R}'_\alpha)_{\alpha>0}$ on $\bF'$ such that
\begin{itemize}
\item[\rm (i)] $E_\partial$ is a Borel subset of $\bF'$ with $\mathcal{B}(E_\partial)=\{A'\cap E_\partial: A'\in \mathcal{B}(\bF')\}$ and dense in $\bF'$ with respect to $\rho'$;
\item[\rm (ii)] each $f\in S(G)$ can be extended to a function $\bar{f}'\in C(\bF')$;
\item[\rm (iii)] $\overline{S(G)'}$ separates points of $\bF'$, where $\overline{S(G)'}:=\{\bar{f}': f\in S(G)\}$;
\item[\rm (iv)] $\bar{R}'_\alpha(x,A)=R_\alpha(x,A)$ for any $\alpha>0,  x\in E_\partial,  A\in \mathcal{B}(E_\partial)$.  
\end{itemize}
Then there exists a homoemorphism $j: \bF\rightarrow \bF'$ such that $j(x)=x$ for $x\in E_\partial$ and $\bar{R}'_\alpha(j(x),j(A))=\bar{R}_\alpha(x,A)$ for any $\alpha>0, x\in \bF$ and $A\in \mathcal{B}(\bF)$.  
\end{lemma}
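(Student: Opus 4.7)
My plan is to realize both $\bF$ and $\bF'$ as the same concrete subset of the Hilbert cube $\mathbf{W}$ from Remark~\ref{RMB8}, and then read off $j$ as the composition of the two identifications. Fix a sequence $\{h_n:n\geq 1\}\subset S(G)$ as in Remark~\ref{RMB8}, so that $\{h_n\}$ is $\|\cdot\|_\infty$-dense in $S(G)$ and $\Phi: E_\partial\to \mathbf{W}$, $x\mapsto (h_n(x))_{n\geq 1}$, defined in \eqref{eq:B2} is injective. By Theorem~\ref{THMB7}(ii) and hypothesis (ii), each $h_n$ extends continuously to $\bar h_n\in C(\bF)$ and $\bar h'_n\in C(\bF')$, giving continuous maps
\[
\bar\Phi:\bF\to\mathbf{W},\; x\mapsto(\bar h_n(x))_{n\geq 1},\qquad \bar\Phi':\bF'\to\mathbf{W},\; x'\mapsto(\bar h'_n(x'))_{n\geq 1}
\]
that both extend $\Phi$. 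A uniform-Cauchy argument using the density of $E_\partial$ shows that $\{\bar h_n\}$ is $\|\cdot\|_\infty$-dense in $\overline{S(G)}$ on $\bF$ and $\{\bar h'_n\}$ is $\|\cdot\|_\infty$-dense in $\overline{S(G)'}$ on $\bF'$. Combined with Theorem~\ref{THMB7}(iii) for $\bF$ and hypothesis (iii) for $\bF'$, this forces both $\{\bar h_n\}$ and $\{\bar h'_n\}$ to separate points. Hence $\bar\Phi$ and $\bar\Phi'$ are continuous injections between compact Hausdorff spaces, hence homeomorphisms onto their images, and continuity together with density of $E_\partial$ gives $\bar\Phi(\bF)=\overline{\Phi(E_\partial)}^{\mathbf{W}}=\bar\Phi'(\bF')$.

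Set $j:=(\bar\Phi')^{-1}\circ\bar\Phi:\bF\to\bF'$; this is a homeomorphism, and for $x\in E_\partial$, $\bar\Phi(x)=\Phi(x)=\bar\Phi'(x)$ gives $j(x)=x$. Uniqueness of any such $j$ follows from density of $E_\partial$ in $\bF$ and continuity, since any other homeomorphism agreeing with the identity on $E_\partial$ must coincide with $j$.

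For the intertwining of resolvents, fix $f\in S(G)$ and $\alpha>0$. Both $\bar R_\alpha\bar f\in C(\bF)$ and $(\bar R'_\alpha\bar f')\circ j\in C(\bF)$ are continuous (as $\bar R_\alpha$ and $\bar R'_\alpha$ are Ray resolvents preserving $C(\bF)$ and $C(\bF')$), and by hypothesis (iv) they agree on $E_\partial$; by density they therefore agree on all of $\bF$. Since $\overline{S(G)}-\overline{S(G)}$ is dense in $C(\bF)$ by Theorem~\ref{THMB7}(iii), this equality extends to every $g\in C(\bF)$:
\[
\int_{\bF}g(y)\,\bar R_\alpha(x,dy)=\int_{\bF'}g\bigl(j^{-1}(y')\bigr)\,\bar R'_\alpha(j(x),dy'),\qquad x\in\bF,
\]
and Riesz representation on the compact Hausdorff space $\bF$ yields $\bar R_\alpha(x,A)=\bar R'_\alpha(j(x),j(A))$ for every $x\in\bF$ and $A\in\mathcal{B}(\bF)$. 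The main subtlety I expect is the asymmetry between the two hypotheses: condition (iii) of the lemma only asserts point separation by $\overline{S(G)'}$ on $\bF'$, whereas Theorem~\ref{THMB7}(iii) provides the strictly stronger uniform density of $\overline{S(G)}-\overline{S(G)}$ in $C(\bF)$. The weaker separation property is exactly enough to make $\bar\Phi'$ injective, while the stronger density on the $\bF$-side is what lets us push the agreement of resolvents from the Ray cone $S(G)$ to arbitrary continuous test functions, thereby closing the argument.
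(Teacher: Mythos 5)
Your proposal is correct and follows essentially the same route as the paper: both realize $\bF$ and $\bF'$ inside the cube $\mathbf{W}$ via the sequence $\{h_n\}$ and its continuous extensions, identify the two images as the closure of $\Phi(E_\partial)$, and define $j$ as the composition of the two resulting homeomorphisms. Your treatment of the resolvent identity merely spells out (via density of $\overline{S(G)}-\overline{S(G)}$ and Riesz representation) a step the paper leaves implicit.
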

\begin{proof}
Note that $\overline{S(G)'}$ is separable with respect to $\|\cdot\|_\infty$, where $\|\bar{f}'\|_\infty:=\|f\|_\infty$.  Using (iii) in this lemma one can take a sequence $\{h_n: n\geq 1\}\subset S(G)$ as in Remark~\ref{RMB8} with the property that $\{\bar{h}'_n:n\geq 1\}$ separates $\bF'$.  Define
\[
	\Phi':= \bF'\rightarrow \mathbf{W},  \quad x\mapsto (\bar{h}'_n(x))_{n\geq 1}.  
\]
Then \cite[Proposition~4.53]{F99} yields that $\Phi'$ is continuous and injective.  Since $\bF'$ is compact,  it follows from \cite[Proposition~4.28]{F99} that $\Phi':\bF'\rightarrow \mathbf{K}':=\{\Phi'(x):x\in \bF'\}\subset \mathbf{W}$ is a homeomorphism.   Note that the extension of \eqref{eq:B2} to $\bF$
\[
	\bar{\Phi}:\bF\rightarrow \mathbf{K}\subset \mathbf{W},
\]
where $\mathbf{K}$ is the closure of $\Phi(E_\partial)$ in $\mathbf{W}$,  is also a homeomorphism.  Since $\Phi'|_{E_\partial}=\Phi$ and $\mathbf{K}'$ is the closure of $\Phi'(E_\partial)=\Phi(E_\partial)$ in $\mathbf{W}$,  one gets that $\mathbf{K}=\mathbf{K}'$.  Consequently $j:=\Phi^{'-1}\circ \bar{\Phi}$ is a homeomorphism from $\bF$ to $\bF'$ such that $j(x)=x$ for any $x\in E_\partial$.  The relation between $\bar{R}_\alpha$ and $\bar{R}'_\alpha$ can be easily obtained by the fact that for any $f\in S(G)$, 
\[
	\bar{R}'_\alpha \bar{f}'= \overline{R_\alpha f}'= \overline{R_\alpha f} \circ j^{-1}=\left(\bar{R}_\alpha \bar{f}\right)\circ j^{-1}
\]
and $\bar{f}'=\bar{f}\circ j^{-1}$.  That completes the proof. 
\end{proof}

\section*{Acknowledgment}

The author would like to thank Professor Jiangang Ying for suggesting him to study the regular representations of $(\sE,\sF)$.  He would also like to thank Professor Patrick Fitzsimmons from University of California,  San Diego,  whose irradiative remarks lead him to a deep study of Ray-Knight compactification. 


\bibliographystyle{siam} 
\bibliography{RegRep} 

\end{document}